\newtheorem{theorem}{Theorem}[section]
\newtheorem{lemma}[theorem]{Lemma}
\newtheorem{corollary}[theorem]{Corollary}
\newtheorem{question}[theorem]{Question}
\newtheorem{definition}[theorem]{Definition}
\newtheorem{assumption}[theorem]{Assumption}
\newtheorem{example}[theorem]{Example}
\newtheorem{proposition}[theorem]{Proposition}
\newtheorem{remark}[theorem]{Remark}
\newcommand{\dbar}{\bar{\partial}}
\newcommand{\pd}[2]{\frac{\partial #1}{\partial #2}}
\newcommand{\inner}[1]{\langle #1\rangle}
\newcommand{\bb}[1]{\mathbb{#1}}
\newcommand{\cu}[1]{\mathcal{#1}}
\newcommand{\til}[1]{\widetilde{#1}}
\newcommand{\bm}[4]{\left(
\begin{array}{c|c}
#1 & #2 \\
\hline
#3 & #4
\end{array}
\right)}
\begin{document}

\title[SYZ transforms for immersed Lagrangian multi-sections]{SYZ transforms for immersed\\ Lagrangian multi-sections}
\author[K. Chan]{Kwokwai Chan}
\address{Department of Mathematics\\ The Chinese University of Hong Kong\\ Shatin\\ Hong Kong}
\email{kwchan@math.cuhk.edu.hk}
\author[Y.-H. Suen]{Yat-Hin Suen}
\address{Department of Mathematics\\ The Chinese University of Hong Kong\\ Shatin\\ Hong Kong}
\address{\textit{Current address: Center for Geometry and Physics\\ Institute for Basic Science (IBS)\\ Pohang 37673\\ Republic of Korea}}
\email{yhsuen@ibs.re.kr}

\date{\today}

\begin{abstract}
In this paper, we study the geometry of the SYZ transform on a semi-flat Lagrangian torus fibration. Our starting point is an investigation on the relation between Lagrangian surgery of a pair of straight lines in a symplectic 2-torus and extension of holomorphic vector bundles over the mirror elliptic curve, via the SYZ transform for immersed Lagrangian multi-sections defined in \cite{AP, LYZ}. This study leads us to a new notion of equivalence between objects in the immersed Fukaya category of a general compact symplectic manifold $(M, \omega)$, under which the immersed Floer cohomology is invariant; in particular, this provides an answer to a question of Akaho-Joyce \cite[Question 13.15]{AJ}. Furthermore, if $M$ admits a Lagrangian torus fibration over an integral affine manifold, we prove, under some additional assumptions, that this new equivalence is mirror to isomorphism between holomorphic vector bundles over the dual torus fibration via the SYZ transform.
\end{abstract}

\maketitle

\tableofcontents

\section{Introduction}

Mirror symmetry was discovered by string theorists around 1990. It first caught the attention of the mathematical community when Candelas, de la Ossa, Green and Parkes \cite{COGP91} showed that mirror symmetry could be used to predict the number of rational curves in a quintic Calabi-Yau 3-fold. This mysterious phenomenon has continued to attract the attention of numerous mathematicians.

Mirror symmetry is a duality between the symplectic geometry and the complex geometry of two different Calabi-Yau manifolds, which form a so-called {\em mirror pair}. The first mathematical approach towards understanding mirror symmetry was due to Kontsevich \cite{HMS} in 1994. He suggested that mirror symmetry could be phrased as an equivalence between two triangulated categories, namely, the derived Fukaya category on the symplectic side and the derived category of coherent sheaves on the complex side; this is known as the {\em homological mirror symmetry (HMS) conjecture}.

Two years later, Strominger, Yau and Zaslow proposed an entirely geometric approach to explain mirror symmetry, which is now known as the {\em SYZ conjecture} \cite{SYZ}. Roughly speaking, the SYZ conjecture states that mirror symmetry can be understood as a fiberwise duality between two special Lagrangian torus fibrations; moreover, symplectic-geometric (resp. complex-geometric) data on one side can be transformed to complex-geometric (resp. symplectic-geometric) data on the mirror side by a fiberwise Fourier--Mukai-type transform, which we call the {\em SYZ transform}.

The SYZ transform has been constructed and applied to understand mirror symmetry in the semi-flat case \cite{AP, LYZ, Leung05, sections_line_bundles} and the toric case \cite{Abouzaid06, Abouzaid09, Fang08, FLTZ11b, FLTZ12, Chan09, Chan-Leung10a, Chan-Leung12, Chan12, Chan-Ueda12, CPU13, CPU14, Fang16}. But in all of these works the primary focus was on Lagrangian sections and the mirror holomorphic line bundles the SYZ program produces. Applications of the SYZ transform for Lagrangian multi-sections, which should produce higher rank holomorphic vector bundles over the mirror, is largely unexplored.

In this paper we study the geometry of the SYZ transform on a semi-flat Lagrangian torus fibration, focusing on immersed Lagrangian multisections. Construction of the semi-flat SYZ transform will be reviewed in Section \ref{sec:review_semi-flatSYZ}.

In view of the HMS conjecture, Fukaya \cite{Fukaya_multivalued_theta_function}, Seidel and Thomas \cite{Thomas01}, among others, have suggested that Lagrangian surgeries between (graded) Lagrangian submanifolds should be mirror dual to extensions between coherent sheaves over the mirror side. We refer to this as the {\em surgery-extension correspondence}. In Section \ref{sec:surgery_extension}, we investigate this correspondence for the simplest nontrivial example, namely, the 2-torus $T^2$. We will equip the Lagrangian submanifolds with $U(1)$-local systems, which will play a key role in the proof of our correspondence theorem.

More precisely, we consider two Lagrangian straight lines
\begin{align*}
\bb{L}_1 & := \bb{L}_{r_1,d_1}[c_1] := \{(e^{2\pi ir_1x}, e^{2\pi i(d_1x+c_1)})\in T^2: x\in\bb{R}\},\\
\bb{L}_2 & := \bb{L}_{r_2,d_2}[c_2] := \{(e^{2\pi ir_2x}, e^{2\pi i(d_2x+c_2)})\in T^2: x\in\bb{R}\}
\end{align*}
in $T^2$, which are equipped, respectively, with the $U(1)$-local systems
\begin{align*}
\mathcal{L}_{b_1}: d + 2\pi i\frac{b_1}{r_1}dx,\quad \mathcal{L}_{b_2}: d + 2\pi i\frac{b_2}{r_1}dx,\quad b_1, b_2 \in \bb{R}.
\end{align*}
We write $\bb{L}_{1,b_1} = (\bb{L}_1, \mathcal{L}_{b_1}), \bb{L}_{2,b_2} = (\bb{L}_2, \mathcal{L}_{b_2})$ for the A-branes obtained in this way, and denote their SYZ transforms, which are holomorphic vector bundles over the mirror elliptic curve $\check{X}$, by $\check{\bb{L}}_{1,b_1}, \check{\bb{L}}_{2,b_2}$ respectively.
We prove the following surgery-extension correspondence theorem in Section \ref{sec:surgery_extension}:

\begin{theorem}(=Theorem \ref{thm:surgery_extension})\label{thm:surgery_extension_intro}
Let $r_1,d_1,r_2,d_2$ be integers satisfying $r_1d_2 > r_2d_1$ and the gcd conditions
$\text{gcd}(r_1,d_1) = \text{gcd}(r_2,d_2) = \text{gcd}(r_1 + r_2, d_1 + d_2) = 1$.
Let $K\subset \bb{L}_1\cap \bb{L}_2$ be a set of intersection points of $\bb{L}_1$ and $\bb{L}_2$ such that the (graded) Lagrangian surgery produces an immersed Lagrangian
$$\mathbb{L}_K := \bb{L}_2 \sharp_K \bb{L}_1$$
with connected domain, which we then equip with the $U(1)$-local system
$$\mathcal{L}_{b}:d + 2\pi i\frac{b}{r_1+r_2}dx,\quad b \in \bb{R}.$$
Then the SYZ mirror bundle $\check{\mathbb{L}}_{K,b}$ of the Lagrangian A-brane $(\bb{L}_K,\mathcal{L}_b)$ is an extension of $\check{\bb{L}}_{1,b_1}$ by $\check{\bb{L}}_{2,b_2}$, i.e., we have a short exact sequence:
$$0 \to \check{\bb{L}}_{2,b_2} \to \check{\mathbb{L}}_{K,b} \to \check{\bb{L}}_{1,b_1} \to 0$$
if and only if $b$ satisfies the integrality condition
$$b_1 + b_2 - b - \frac{1}{2}\in\bb{Z}.$$
\end{theorem}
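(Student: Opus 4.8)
The strategy is to make everything explicit on both sides of the SYZ transform and then compare. On the symplectic side, I would first write down the immersed Lagrangian $\mathbb{L}_K = \mathbb{L}_2 \sharp_K \mathbb{L}_1$ concretely: since $\gcd(r_1+r_2, d_1+d_2)=1$, its underlying Lagrangian is (after a symplectomorphism) the straight line $\mathbb{L}_{r_1+r_2,d_1+d_2}[c]$ for an appropriate constant $c$ determined by $K$, traversed once. The local system $\mathcal{L}_b$ with $b\in\mathbb{R}$ then makes sense. On the complex side, I would use the description of the SYZ transform from Section \ref{sec:review_semi-flatSYZ}: a straight line $\mathbb{L}_{r,d}[c]$ with local system $\mathcal{L}_\beta$ transforms to an explicit holomorphic vector bundle on $\check{X}$ of rank $r$ and degree $d$, realized as a quotient $\mathbb{C}\times\mathbb{C}^r$ by a $\mathbb{Z}^2$-action whose cocycle involves theta-function-type automorphy factors built from $c$ and $\beta$. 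Thus $\check{\mathbb{L}}_{1,b_1}$, $\check{\mathbb{L}}_{2,b_2}$, $\check{\mathbb{L}}_{K,b}$ are all presented by such explicit automorphy factors.

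\smallskip

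Next I would construct the candidate maps in the short exact sequence. The inclusion $\check{\mathbb{L}}_{2,b_2}\hookrightarrow\check{\mathbb{L}}_{K,b}$ and the projection $\check{\mathbb{L}}_{K,b}\twoheadrightarrow\check{\mathbb{L}}_{1,b_1}$ should be induced, fiberwise over $\mathbb{C}$ (the universal cover of $\check{X}$), by the block inclusion $\mathbb{C}^{r_2}\hookrightarrow\mathbb{C}^{r_1+r_2}$ and block projection $\mathbb{C}^{r_1+r_2}\twoheadrightarrow\mathbb{C}^{r_1}$, respectively. The content is that these block maps are compatible with the three automorphy factors: I must choose the off-diagonal (upper-triangular) entry of the cocycle for $\check{\mathbb{L}}_{K,b}$ so that the cocycle condition holds and so that the two block maps become bundle maps. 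This is where the condition $r_1 d_2 > r_2 d_1$ enters — it guarantees $\deg(\check{\mathbb{L}}_1 \otimes \check{\mathbb{L}}_2^\vee) > 0$, hence $\dim\mathrm{Ext}^1(\check{\mathbb{L}}_{1,b_1},\check{\mathbb{L}}_{2,b_2})>0$ so a nontrivial extension exists, and it also controls the dimension count for the space of theta functions appearing as off-diagonal terms. The gcd conditions ensure all three bundles are of the expected rank and indecomposable-type so that the extension, once nontrivial, is the right one.

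\smallskip

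The crux — and the place the precise constant $\tfrac12$ surfaces — is matching the {\em base point} $c$ of $\mathbb{L}_K$ (which is forced by the surgery, not free) against the automorphy data. Under the surgery $\mathbb{L}_2\sharp_K\mathbb{L}_1$, the neck is inserted at the intersection points in $K$, and tracing through the grading/position data pins down $c$ in terms of $c_1, c_2$ and combinatorial data of $K$; I expect a shift by a half-period to appear because the surgery neck is centered between the two sheets, which is exactly a half-lattice translation in the fiber direction. Concretely, the off-diagonal automorphy term for $\check{\mathbb{L}}_{K,b}$ has to be a holomorphic section of the relevant line bundle, i.e.\ a classical theta function, and such a section exists as an honest function on $\mathbb{C}$ (rather than only up to a character that obstructs the cocycle condition) precisely when the "characteristic" $b_1+b_2-b-\tfrac12$ is an integer. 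So I would: (i) compute the automorphy factor of $\mathrm{Hom}(\check{\mathbb{L}}_{1,b_1},\check{\mathbb{L}}_{2,b_2})$ explicitly; (ii) identify its holomorphic sections with theta functions with characteristics determined by $b_1, b_2$ and the surgery constant; (iii) show the upper-triangular cocycle assembled from this section satisfies the $\mathbb{Z}^2$-cocycle identity iff the half-integer shift condition holds; and (iv) verify the resulting bundle is isomorphic to $\check{\mathbb{L}}_{K,b}$ by comparing automorphy factors on the nose, using the explicit SYZ formula for the surgered Lagrangian. The main obstacle is step (iii)–(iv): bookkeeping the automorphy cocycles so that the algebraic extension built from $\mathrm{Ext}^1$ literally coincides with the geometric bundle $\check{\mathbb{L}}_{K,b}$, with the constant tracked exactly; a clean way to organize this is to pick an explicit quasi-periodicity-compatible trivialization and reduce the whole comparison to an identity among theta functions of the form $\vartheta\!\left[\begin{smallmatrix}*\\ *\end{smallmatrix}\right]$, where the "$*$" in the characteristic is $b_1+b_2-b-\tfrac12$.
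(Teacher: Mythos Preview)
Your approach is genuinely different from the paper's, and it has real gaps.

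\textbf{What the paper actually does.} The paper never constructs the extension maps explicitly or writes down theta functions. For \emph{necessity}, it observes that any extension forces $\det(\check{\mathbb{L}}_{K,b})\cong\det(\check{\mathbb{L}}_{1,b_1})\otimes\det(\check{\mathbb{L}}_{2,b_2})$, and a direct computation of the factor of automorphy of this degree-zero line bundle (Proposition~\ref{prop:same_det}) shows that triviality is equivalent to \emph{two} integrality conditions: one on $a=\int_0^{r_1+r_2}\varphi(x)\,dx$ and one reading $b_1+b_2-\tfrac{M}{2}-b\in\mathbb{Z}$ with $M$ the number of domain components. The first condition is then shown to hold automatically by a clever area/topology argument (Lemma~\ref{lem:area}); the second, with $M=1$, is the stated condition on $b$. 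For \emph{sufficiency}, the paper uses abstract algebraic geometry: the gcd hypotheses make all three bundles stable, a result of Ballico--Brambila-Paz produces a surjection $\check{\mathbb{L}}_{K,b}\twoheadrightarrow\check{\mathbb{L}}_{1,b_1}$, and then a flat-family/limit argument combined with Atiyah's classification identifies the kernel with $\check{\mathbb{L}}_{2,b_2}$. No explicit cocycle for the extension is ever written.

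\textbf{Where your plan breaks down.} First, your claim that $\mathbb{L}_K$ ``is (after a symplectomorphism) the straight line $\mathbb{L}_{r_1+r_2,d_1+d_2}[c]$'' is false as stated: the surgery leaves self-intersection points at $(\mathbb{L}_1\cap\mathbb{L}_2)\setminus K$, so $\mathbb{L}_K$ is genuinely immersed and not Hamiltonian equivalent to any embedded line. What is true (and what the paper uses) is that the SYZ transition function depends on the defining function $\varphi$ only through $a=\int_0^{r_1+r_2}\varphi$, so $\check{\mathbb{L}}_{K,b}$ is isomorphic to the mirror of a straight line---but this already requires knowing $a$, which brings us to the second gap: you never address the ``first integrality condition'' on $a$. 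This is nontrivial content (the paper's Lemma~\ref{lem:area}), and without it you cannot match determinants, let alone the full bundles. Third, your explanation of the $\tfrac{1}{2}$ is wrong. It has nothing to do with a ``half-lattice translation'' from the surgery neck. It comes from the permutation-matrix form of the rank-$r$ transition function $g_{UV}$: taking determinants produces a sign $(-1)^{r-1}$ (equivalently $(-1)^M$ in the multi-component case), and $e^{i\pi M}$ contributes $\tfrac{M}{2}$ to the automorphy exponent; with connected domain $M=1$ this is the $\tfrac{1}{2}$.

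Your theta-function route might be salvageable for the necessity direction (it would essentially recompute Proposition~\ref{prop:same_det}), but for sufficiency you would still need either Atiyah's classification or a direct verification that your block-cocycle bundle is isomorphic to $\check{\mathbb{L}}_{K,b}$---and since the extension class is not unique while the middle bundle is, step~(iv) of your plan cannot be a comparison ``on the nose''.
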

In particular, this theorem implies the intriguing phenomenon that the surgery-extension correspondence cannot hold unless we equip Lagrangian submanifolds with suitable {\em nontrivial} local systems (even in the case when we equip $L_1, L_2$ with trivial local systems). 

In Floer-theoretic terms, the integrality condition
in Theorem \ref{thm:surgery_extension_intro} can be regarded as a generalization of degree $-1$ marked points in Abouzaid's work. More precisely, in \cite{Abouzaid_Fukaya_categories_of_higher_genus}, Abouzaid considered immersed curves in Riemann surfaces with one marked point of prescribed degree $-1$, and proved that mapping cones in the Fukaya category can be geometrically realized as Lagrangian surgeries. One may think of the prescribed $-1$ degree for a marked point as the holonomy of a flat $U(1)$-connection concentrated at that point. Our integrality condition recovers Abouzaid's condition by taking $b_1=b_2=b=\frac{1}{2}$.

\begin{remark}
Our theorem is a generalization of a recent result of K. Kobayashi \cite{Ko} to {\em any} rank and degree that satisfy the gcd assumptions.
\end{remark}

\begin{remark}
We believe that the above theorem is known to experts; see in particular \cite[Section 6]{Thomas01}.
\end{remark}

\begin{figure}[H]
\centering
\includegraphics[width=70mm]{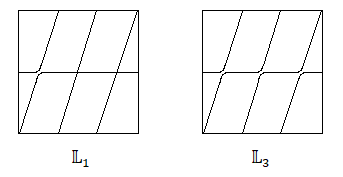}
\caption{Two non-Hamiltonian equivalent immersed Lagrangian multi-sections in $T^2$.}
\label{fig:3_intro}
\end{figure}

By our surgery-extension correspondence theorem, we observe that as long as the two sets of intersection points $K, K'\subset L_1\cap L_2$ are chosen so that the surgeries $\bb{L}_K$ and $\bb{L}_{K'}$ satisfy the assumptions in Theorem \ref{thm:surgery_extension_intro}, their SYZ mirror bundles $\check{\bb{L}}_{K,b}$ and $\check{\bb{L}}_{K',b}$ are isomorphic as holomorphic vector bundles. This is because both bundles are indecomposable and they share the same determinant line bundle $\det(\check{\bb{L}}_{1,b_1})\otimes\det(\check{\bb{L}}_{2,b_2})$, so they must be isomorphic in view of Atiyah's classification of indecomposable vector bundles over elliptic curves \cite{Atiyah_vector_bundle_over_an_elliptic_curve}.
For example, Figure \ref{fig:3_intro} shows two immersed Lagrangian multi-sections $\bb{L}_1, \bb{L}_3$ in $T^2$ which share the same SYZ mirror bundles.

A natural question is then:
\begin{question}\label{ques:sympl_relation}
What is the symplecto-geometric relation between $\bb{L}_K$ and $\bb{L}_{K'}$?
\end{question}

First of all, the relation cannot be the ordinary Hamiltonian equivalence because $\bb{L}_K$ and $\bb{L}_{K'}$ may have a different number of self-intersection points (like in the above example). A na\"ive guess is a weaker notion, called {\em local Hamiltonian equivalence} (Definition \ref{def:loc_Ham}). However, Akaho and Joyce \cite{AJ} pointed out, in view of the {\em Lagrangian $h$-principle} \cite{Gromov_Lagrangian_h, Lee_Lagrangian_h}, local Hamiltonian equivalence is only a weak homotopical notion. It cannot detect `quantum' information, and is therefore too coarse for the immersed Floer cohomology to be invariant.
On the other hand, since the SYZ mirror bundles of $\bb{L}_K$ and $\bb{L}_{K'}$ are isomorphic, the Floer cohomology of $\bb{L}_K$ and $\bb{L}_{K'}$ should also be isomorphic in view of HMS.

This leads us to digress away from SYZ mirror symmetry to study the invariance property of immersed Floer cohomology in Section \ref{sec:inv_thm}, in which we introduce a new equivalence relation on immersed Lagrangian submanifolds called {\em lifted Hamiltonian equivalence}.
\begin{definition}(=Definition \ref{def:M_lifted_Ham})
Let $\pi:\til{M}\to M$ be a finite unramified covering of a symplectic manifold $(M,\omega)$. For two Lagrangian immersions $\mathbb{L}_1 = (L_1, \xi_1), \mathbb{L}_2 = (L_2, \xi_2)$ of $M$, we say $\mathbb{L}_1$ is {\em $(\til{M},\pi)$-lifted Hamiltonian isotopic} to $\mathbb{L}_2$ if there exists a diffeomorphism $\phi:L_1\to L_2$ and Lagrangian immersions $\til{\xi}_1:L_1\to\til{M}$, $\til{\xi}_2:L_2\to\til{M}$ such that $\xi_1=\pi\circ\til{\xi}_1$, $\xi_2=\pi\circ\til{\xi}_2$ and $(L_1,\til{\xi}_1)$ is globally Hamiltonian isotopic (see Definition \ref{def:global_Ham} or \cite[Definition 13.14]{AJ}) to $(L_1,\til{\xi}_2\circ\phi)$ in $(\til{M},\pi^*\omega)$.
\end{definition}

We also make the following
\begin{definition}(=Definition \ref{def:lifted_Ham})
Let $(M,\omega)$ be a symplectic manifold. For two Lagrangian immersions $\mathbb{L}_1 = (L_1, \xi_1), \mathbb{L}_2 = (L_2, \xi_2)$ of $M$, we say $\mathbb{L}_1$ is {\em lifted Hamiltonian isotopic} to $\mathbb{L}_2$ if there exists an integer $l>0$ and Lagrangian immersions $\bb{L}^{(1)}:=\bb{L}_1,\bb{L}^{(2)},\dots,\bb{L}^{(l-1)},\bb{L}^{(l)}:=\bb{L}_2$ of $M$, such that $\bb{L}^{(j)}$ is $(\til{M}_j,\pi_j)$-lifted Hamiltonian isotopic to $\bb{L}^{(j+1)}$, for some finite unramified covering $\pi_j:\til{M}_j\to M$, $j=1,\dots,l-1$.
\end{definition}

This new notion of equivalence is weaker than the usual Hamiltonian equivalence but stronger than local Hamiltonian equivalence (as proved in Corollary \ref{cor:property_lifted_Ham}). In Section \ref{sec:inv_thm}, the following invariance property of immersed Floer cohomology under lifted Hamiltonian equivalences is proved:
\begin{theorem}(=Theorem \ref{thm:inv_thm})
Let $\bb{L}_1, \bb{L}_2$ be Lagrangian immersions in $(M,\omega)$. The Floer cohomology $HF(\bb{L}_1,\bb{L}_2)$ is invariant under lifted Hamiltonian isotopy, i.e., if $\bb{L}_2$ is lifted Hamiltonian isotopic to $\bb{L}_2'$, then there is a quasi-isomorphism
$$(CF(\mathbb{L}_1,\mathbb{L}_2),m_1)\simeq(CF(\mathbb{L}_1,\mathbb{L}_2'),m_1).$$
\end{theorem}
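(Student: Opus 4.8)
The plan is to induct on the length $l$ of the chain in Definition \ref{def:lifted_Ham}: since a composition of quasi-isomorphisms is again a quasi-isomorphism, it suffices to prove the theorem when $\bb{L}_2$ is $(\til M,\pi)$-lifted Hamiltonian isotopic to $\bb{L}_2'$ for a single finite unramified covering $\pi:\til M\to M$. Writing $\bb{L}_2=(L_2,\xi_2)$, $\bb{L}_2'=(L_2',\xi_2')$, fix the diffeomorphism $\phi:L_2\to L_2'$ and the lifts $\til\xi_2:L_2\to\til M$, $\til\xi_2':L_2'\to\til M$ of $\xi_2,\xi_2'$ provided by Definition \ref{def:M_lifted_Ham}, so that $\til{\bb{L}}_2:=(L_2,\til\xi_2)$ is globally Hamiltonian isotopic to $\til{\bb{L}}_2':=(L_2,\til\xi_2'\circ\phi)$ in $(\til M,\pi^*\omega)$.

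The idea is to pull the whole Floer package up to $\til M$, where the hypothesis becomes an honest global Hamiltonian isotopy and the standard invariance theorem applies, and then push the conclusion back down. This is possible because $\pi$ is a \emph{local symplectomorphism}: with $\til J:=\pi^*J$ for the almost complex structure $J$ on $M$, the map $\pi$ is $(\til J,J)$-holomorphic and area-preserving, and — since the domains of the $J$-holomorphic polygons relevant to $m_1$ are contractible — it identifies each such polygon in $M$ with its unique lift to $\til M$, the linearized operators, energies and Fredholm indices all matching, so that a choice of perturbation (Kuranishi) data on $M$ pulls back to a compatible choice on $\til M$. Concretely I would pull $\bb{L}_1$, $\bb{L}_2$, $\bb{L}_2'$ back along $\pi$ to the fibre products $\hat{\bb{L}}_j:=(L_j\times_M\til M,\hat\xi_j)$ — finite, possibly disconnected, covers of the $L_j$ — equip them with the pulled-back bounding cochains, and establish that the ``forget-the-lift'' map $CF_{\til M}(\hat{\bb{L}}_1,\hat{\bb{L}}_2)\to CF_M(\bb{L}_1,\bb{L}_2)$ and the ``sum-over-lifts'' map in the other direction are chain maps for $m_1$ whose composite is multiplication by $\deg\pi$; as $\deg\pi$ is invertible in the coefficient ring this realizes $(CF_M(\bb{L}_1,\bb{L}_2),m_1)$ as a direct summand of $(CF_{\til M}(\hat{\bb{L}}_1,\hat{\bb{L}}_2),m_1)$ cut out by an explicit idempotent, and similarly with $\bb{L}_2'$. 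Inside $(\til M,\pi^*\omega)$ the lift $\til{\bb{L}}_2$ is globally Hamiltonian isotopic to $\til{\bb{L}}_2'$ (reparametrizing by $\phi$), so the invariance of immersed Floer cohomology under global Hamiltonian isotopy \cite{AJ} gives a quasi-isomorphism relating the Floer complexes of $\hat{\bb{L}}_1$ against the deck-orbits of $\til{\bb{L}}_2$ and of $\til{\bb{L}}_2'$, hence $CF_{\til M}(\hat{\bb{L}}_1,\hat{\bb{L}}_2)\simeq CF_{\til M}(\hat{\bb{L}}_1,\hat{\bb{L}}_2')$; intertwining this with the transfer idempotents produces the desired quasi-isomorphism $(CF_M(\bb{L}_1,\bb{L}_2),m_1)\simeq(CF_M(\bb{L}_1,\bb{L}_2'),m_1)$, and concatenating over the chain proves the theorem.

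The main difficulty is precisely this passage between the Floer theories on $M$ and on $\til M$, which is delicate because $\pi$ is only a \emph{local} symplectomorphism: a self-intersection point of an immersed Lagrangian in $M$ has several preimages in $\til M$, only some of which are self-intersections of a chosen lift, which is why one must use the full preimages $\hat{\bb{L}}_j=\pi^{-1}(\bb{L}_j)$ — equivalently, carry along the deck transformation group, or a nontrivial local system on $M$ — rather than naive single lifts. Making rigorous the dictionary between the two packages (that the transfer maps are chain maps at the level of $m_1$; that the pulled-back bounding cochain solves the Maurer--Cartan equation on $\til M$; that signs, gradings, Novikov exponents and the combinatorics of boundary lifts through self-intersection points all match), together with checking that the global-Hamiltonian-isotopy quasi-isomorphism upstairs can be propagated coherently through the transfer idempotents, is where essentially all the work lies; once this bookkeeping is in place, the reduction to a single covering and the induction on $l$ are routine.
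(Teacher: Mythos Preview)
Your overall architecture — reduce to a single covering, lift the Floer package to $\til M$, apply Akaho--Joyce's global Hamiltonian invariance there, and transport the result back down — is exactly the paper's. The difference is in how the lifting is done, and the paper's choice is substantially simpler.

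You lift \emph{both} Lagrangians to their full fibre-product preimages $\hat{\bb{L}}_j=L_j\times_M\til M$, which forces you into a transfer/retract argument (the ``forget'' and ``sum-over-lifts'' maps composing to $\deg\pi$) and into identifying $\hat{\bb{L}}_2$ with the deck-orbit of $\til{\bb{L}}_2$. The paper instead uses an \emph{asymmetric} lift: since by hypothesis $\bb{L}_2$ already comes with a specific lift $\til\xi_2:L_2\to\til M$, one takes that single lift $\til{\bb{L}}_2=(L_2,\til\xi_2)$ on the second factor and only uses the full fibre product $L_1\times_M\til M$ for $\bb{L}_1$ (which has no preferred lift). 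With this choice, intersection points and holomorphic strips upstairs and downstairs are in \emph{bijection} (their Lemma~\ref{lem:intersection_points_correspondence}), and the projection $\pi_M$ induces an honest chain \emph{isomorphism} $CF_{\til M}(L_1\times_M\til M,\til{\bb{L}}_2)\cong CF_M(\bb{L}_1,\bb{L}_2)$ (their Proposition~\ref{thm:chain_isom}). The global Hamiltonian isotopy $\til{\bb{L}}_2\simeq\til{\bb{L}}_2'$ then transports directly, with no idempotents to intertwine.

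Besides being longer, your route has a real soft spot: the step ``global Hamiltonian isotopy of $\til{\bb{L}}_2$ to $\til{\bb{L}}_2'$ gives $CF_{\til M}(\hat{\bb{L}}_1,\hat{\bb{L}}_2)\simeq CF_{\til M}(\hat{\bb{L}}_1,\hat{\bb{L}}_2')$'' is not immediate. The Hamiltonian flow $\psi_t$ you are given moves only the \emph{single} lift $\til{\bb{L}}_2$ to $\til{\bb{L}}_2'$; it need not carry the full preimage $\hat{\bb{L}}_2$ to $\hat{\bb{L}}_2'$, since $\psi_t$ has no reason to commute with deck transformations. Writing $\hat{\bb{L}}_2$ as a deck-orbit of $\til{\bb{L}}_2$ already presupposes the covering is Galois (not part of the definition), and even then you must run a separate conjugated isotopy $g\psi_t g^{-1}$ on each deck-translate and argue the resulting componentwise quasi-isomorphisms assemble compatibly with your transfer idempotent. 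All of this evaporates once you notice that only $\bb{L}_1$ needs to be pulled back as a fibre product.
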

In particular, this gives an answer to a question of Akaho and Joyce \cite[Question 13.15]{AJ}, asking for restricted classes of local Hamiltonian equivalences under which the immersed Lagrangian Floer cohomology is invariant.

In the final Section \ref{sec:mirror_analog_isom}, we go back to SYZ mirror symmetry and Question \ref{ques:sympl_relation}; in fact, we would like to ask an even more general question:
\begin{question}\label{thm:analog}
Let $X\to B$ be a Lagrangian torus fibration and $\check{X}\to B$ be the dual torus fibration. What is the mirror analog of isomorphism between holomorphic vector bundles over $\check{X}$?
\end{question}

We prove that, under certain conditions, the answer is, again, given by lifted Hamiltonian equivalence:
\begin{theorem}(=Theorem \ref{thm:isom_SYZ_mirror})
Suppose that $B$ is compact. Let $\mathbb{L}_1, \mathbb{L}_2$ be immersed Lagrangian multi-sections of $X \to B$ with the same connected domain $L$ and unramified covering map $c_r:L\to B$. Assume that the group of deck transformations $\text{Deck}(L/B)$ acts transitively on fibers of $c_r:L\to B$. Then $\mathbb{L}_1$ is $(L\times_BX,\pi_X)$-lifted Hamiltonian isotopic to $\mathbb{L}_2$ if and only if their SYZ mirrors $\check{\mathbb{L}}_1$ and $\check{\mathbb{L}}_2$ are isomorphic as holomorphic vector bundles over $\check{X}$.
\end{theorem}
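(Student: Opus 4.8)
The plan is to \emph{resolve} each immersed Lagrangian multi-section into an \emph{embedded} Lagrangian section on the fibre product $L\times_B X\to L$, apply the semi-flat SYZ correspondence between embedded Lagrangian sections and holomorphic line bundles there (as set up in Section~\ref{sec:review_semi-flatSYZ}), and then transport everything down to $\check X$ by pushing forward along the finite covering $p_{\check X}\colon Y:=L\times_B\check X\to\check X$. For the set-up: since $\xi_i$ covers $c_r\colon L\to B$, the map $\hat\xi_i(p):=(p,\xi_i(p))$ is a Lagrangian section of $L\times_B X\to L$ lifting $\xi_i$ through $\pi_X$, and---crucially---it is an \emph{embedding}, because the self-intersection points of $\xi_i$ in $X$ sit on distinct sheets of $\pi_X$ and so are separated upstairs; also $L$ is compact since $B$ is. Pulling the integral affine structure back to $L$, the SYZ dual of $L\times_B X\to L$ is $Y\to L$, and by the construction in Section~\ref{sec:review_semi-flatSYZ} one has $\check{\mathbb{L}}_i=(p_{\check X})_*\check{\hat{\mathbb{L}}}_i$, where $\check{\hat{\mathbb{L}}}_i$ is the (semi-flat) SYZ transform of $\hat\xi_i$, a holomorphic line bundle on $Y$. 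The transitivity hypothesis is precisely the statement that $c_r$, and hence $p_{\check X}$, is a \emph{regular} (Galois) covering with deck group $G:=\mathrm{Deck}(L/B)$; I will use that $p_{\check X}^*(p_{\check X})_*\mathcal F\cong\bigoplus_{g\in G}g^*\mathcal F$, that $(p_{\check X})_*(\Sigma_g^*\mathcal F)\cong(p_{\check X})_*\mathcal F$ for the deck transformation $\Sigma_g\colon(p,\check x)\mapsto(g(p),\check x)$ of $Y$, and that the SYZ transform of an embedded Lagrangian section depends only on its image.

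For the forward implication, a $(L\times_B X,\pi_X)$-lifted Hamiltonian isotopy provides a diffeomorphism $\phi$ of $L$ and lifts $\til\xi_1,\til\xi_2$ of $\xi_1,\xi_2$ through $\pi_X$ with $(L,\til\xi_1)$ globally Hamiltonian isotopic to $(L,\til\xi_2\circ\phi)$. Each $\til\xi_i$ differs from $\hat\xi_i$ by a deck transformation of $\pi_X$ (both being lifts of $\xi_i$, with $L$ connected), hence is again an embedding, so the global Hamiltonian isotopy is an ordinary Hamiltonian isotopy of embedded Lagrangian sections in $(L\times_B X,\pi_X^*\omega)$; the semi-flat correspondence then gives an isomorphism $\mathrm{SYZ}(\til\xi_1)\cong\mathrm{SYZ}(\til\xi_2\circ\phi)$ of holomorphic line bundles on $Y$. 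Since $\til\xi_2\circ\phi$ and $\til\xi_2$ have the same image (as $\phi$ is a diffeomorphism of $L$), and each $\til\xi_i$ differs from $\hat\xi_i$ by a deck transformation, pushing forward by $p_{\check X}$ and using the facts collected above yields
\[
\check{\mathbb{L}}_1=(p_{\check X})_*\check{\hat{\mathbb{L}}}_1\cong(p_{\check X})_*\mathrm{SYZ}(\til\xi_1)\cong(p_{\check X})_*\mathrm{SYZ}(\til\xi_2\circ\phi)\cong(p_{\check X})_*\check{\hat{\mathbb{L}}}_2=\check{\mathbb{L}}_2 .
\]

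For the converse, suppose $\check{\mathbb{L}}_1\cong\check{\mathbb{L}}_2$ over $\check X$. Pulling back along the Galois covering $p_{\check X}$ produces an isomorphism
\[
\textstyle\bigoplus_{g\in G}g^*\check{\hat{\mathbb{L}}}_1\;\cong\;p_{\check X}^*\check{\mathbb{L}}_1\;\cong\;p_{\check X}^*\check{\mathbb{L}}_2\;\cong\;\bigoplus_{g\in G}g^*\check{\hat{\mathbb{L}}}_2
\]
of holomorphic vector bundles on the compact complex manifold $Y$. As $\mathrm{Hom}$-spaces between coherent sheaves on $Y$ are finite-dimensional, the Krull--Schmidt theorem applies; each $g^*\check{\hat{\mathbb{L}}}_i$ being a line bundle, hence indecomposable, the two sides have the same indecomposable summands up to isomorphism, so $\check{\hat{\mathbb{L}}}_2\cong g_0^*\check{\hat{\mathbb{L}}}_1$ for some $g_0\in G$. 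Running the semi-flat correspondence in reverse, this isomorphism of semi-flat line bundles on $Y$ says that $\hat\xi_2$ is Hamiltonian isotopic in $(L\times_B X,\pi_X^*\omega)$ to $\Sigma_{g_0}\circ\hat\xi_1$, which is itself a lift of $\xi_1$ through $\pi_X$. Taking $\phi=\mathrm{id}_L$, $\til\xi_1:=\Sigma_{g_0}\circ\hat\xi_1$ and $\til\xi_2:=\hat\xi_2$ then exhibits $\mathbb{L}_1$ as $(L\times_B X,\pi_X)$-lifted Hamiltonian isotopic to $\mathbb{L}_2$.

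I expect the main obstacle to be the step in the converse that passes from an isomorphism of the push-forwards $(p_{\check X})_*\check{\hat{\mathbb{L}}}_i$ to an isomorphism $\check{\hat{\mathbb{L}}}_2\cong g_0^*\check{\hat{\mathbb{L}}}_1$ for a \emph{single} deck transformation $g_0$: this is false for push-forward along a general (non-Galois) finite covering, so both the transitivity/Galois hypothesis (which makes $p_{\check X}^*(p_{\check X})_*$ the sum over $G$) and the compactness of $B$, hence of $Y$ (which makes the Krull--Schmidt theorem available), are genuinely used here rather than merely convenient. The remaining work is more routine but still needs care: confirming the precise parametrisation-independence and deck-invariance of the semi-flat SYZ transform as constructed in Section~\ref{sec:review_semi-flatSYZ}, that Hamiltonian isotopy classes of embedded Lagrangian sections are identified with isomorphism classes of semi-flat holomorphic line bundles in \emph{both} directions, and that the data produced in the converse fits Definition~\ref{def:M_lifted_Ham} verbatim.
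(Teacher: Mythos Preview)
Your proposal is correct and, in its overall architecture, matches the paper's proof: both lift the immersed multi-sections to embedded Lagrangian sections of $L\times_B X\to L$, invoke the section/line-bundle correspondence of \cite{sections_line_bundles} on the cover, and use the Galois decomposition $p_{\check X}^*(p_{\check X})_*\mathcal F\cong\bigoplus_{g\in G}g^*\mathcal F$ together with the deck-invariance of push-forward. The forward implication is essentially identical to the paper's.

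The converse is where you take a genuinely different route. From the isomorphism $\bigoplus_g g^*\check{\hat{\mathbb L}}_1\cong\bigoplus_g g^*\check{\hat{\mathbb L}}_2$, the paper argues by hand: it picks nonzero components $\check{\hat{\mathbb L}}_1\to\check\tau_1^*\check{\hat{\mathbb L}}_2$ and $\check{\hat{\mathbb L}}_2\to\check\tau_2^*\check{\hat{\mathbb L}}_1$, iterates them until the composite lands back in $\check{\hat{\mathbb L}}_1$, and then uses compactness of $\check X$ (hence of $Y$) to conclude that this nonzero endomorphism of a line bundle is a nonzero constant, forcing the first map to be an isomorphism. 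You instead invoke Krull--Schmidt for coherent sheaves on the compact complex manifold $Y$ to match indecomposable summands directly. Your argument is cleaner and more conceptual; the paper's is more elementary and self-contained, avoiding the categorical machinery. Both consume the same hypotheses at the same points: transitivity for the Galois decomposition and for identifying arbitrary lifts with deck-translates of the canonical one, and compactness of $B$ for finite-dimensionality of $\mathrm{Hom}$ (your version) or constancy of global holomorphic functions (the paper's version). One small imprecision: when you write that $\til\xi_i$ differs from $\hat\xi_i$ by a deck transformation ``with $L$ connected'', you are implicitly also using that the first projection $\pi_L\circ\til\xi_i\colon L\to L$ is a self-map covering $\mathrm{id}_B$ and hence (by a degree count, using compactness) a diffeomorphism; the paper makes this reparametrisation step explicit by passing to $\tau\circ\til\xi\circ\tau_L^{-1}$.
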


Combining this with Theorem \ref{thm:surgery_extension}, we obtain an answer to the earlier Question \ref{ques:sympl_relation}:
\begin{corollary}(=Corollary \ref{cor:torus})
Let $\bb{L}_1 = \bb{L}_{r_1,d_1}[c_1]$ and $\bb{L}_2 = \bb{L}_{r_2,d_2}[c_2]$ be as in Theorem \ref{thm:surgery_extension_intro}. If $K, K'\subset L_1\cap L_2$ are sets of intersection points such that the Lagrangian surgeries $\bb{L}_K= \bb{L}_2\sharp_{K}\bb{L}_1$ and $\bb{L}_{K'}= \bb{L}_2\sharp_{K'}\bb{L}_1$ have connected domain and satisfy the gcd assumption $\text{gcd}(r_1 + r_2, d_1 + d_2) = 1$, then $\bb{L}_K$ and $\bb{L}_{K'}$ are $(S^1\times_{S^1}T^2,\pi_{T^2})$-lifted Hamiltonian equivalent, and hence have isomorphic immersed Lagrangian Floer cohomologies.
\end{corollary}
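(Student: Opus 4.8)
The plan is to verify that the hypotheses of Theorem \ref{thm:isom_SYZ_mirror} are met, so that the corollary becomes an immediate consequence of it together with the surgery-extension correspondence Theorem \ref{thm:surgery_extension_intro}. First I would record that, by construction, both Lagrangian surgeries $\bb{L}_K = \bb{L}_2 \sharp_K \bb{L}_1$ and $\bb{L}_{K'} = \bb{L}_2 \sharp_{K'} \bb{L}_1$ are immersed Lagrangian multi-sections of the fibration $T^2 \to S^1$ whose domain is the connected double cover $c_r : L \to S^1$ of degree $r_1 + r_2$ (here I am using that the surgeries have connected domain, by assumption). Crucially, since $L \to S^1$ is a connected covering of $S^1$, it is necessarily a cyclic covering, so $\mathrm{Deck}(L/S^1) \cong \bb{Z}/(r_1+r_2)\bb{Z}$ acts transitively on each fiber; thus the transitivity hypothesis of Theorem \ref{thm:isom_SYZ_mirror} is automatic in this one-dimensional base setting. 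One should also check that $\bb{L}_K$ and $\bb{L}_{K'}$ can be arranged to have the \emph{same} domain $L$ and the same covering map $c_r$ (not merely isomorphic ones); this follows because the surgery only modifies the Lagrangian near the finitely many points of $K$ (resp. $K'$), leaving the underlying covering datum $L \to S^1$ unchanged.

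Next I would produce the local systems. Equip $\bb{L}_K$ and $\bb{L}_{K'}$ with the $U(1)$-local system $\mathcal{L}_b : d + 2\pi i \frac{b}{r_1+r_2} dx$ for the \emph{same} value of $b$ chosen to satisfy the integrality condition $b_1 + b_2 - b - \frac12 \in \bb{Z}$ of Theorem \ref{thm:surgery_extension_intro}. Applying that theorem twice, to $K$ and to $K'$, yields short exact sequences
\[
0 \to \check{\bb{L}}_{2,b_2} \to \check{\bb{L}}_{K,b} \to \check{\bb{L}}_{1,b_1} \to 0, \qquad
0 \to \check{\bb{L}}_{2,b_2} \to \check{\bb{L}}_{K',b} \to \check{\bb{L}}_{1,b_1} \to 0
\]
of holomorphic vector bundles over $\check{X}$. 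As observed in the paragraph following Theorem \ref{thm:surgery_extension_intro}, both $\check{\bb{L}}_{K,b}$ and $\check{\bb{L}}_{K',b}$ are indecomposable with the same rank $r_1 + r_2$ and the same determinant line bundle $\det(\check{\bb{L}}_{1,b_1}) \otimes \det(\check{\bb{L}}_{2,b_2})$, so by Atiyah's classification of indecomposable bundles over an elliptic curve \cite{Atiyah_vector_bundle_over_an_elliptic_curve} they are isomorphic as holomorphic vector bundles: $\check{\bb{L}}_{K,b} \cong \check{\bb{L}}_{K',b}$.

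Now I would invoke the ``only if'' direction of Theorem \ref{thm:isom_SYZ_mirror} in reverse: having established that the SYZ mirrors are isomorphic, and having checked that $\bb{L}_K, \bb{L}_{K'}$ have the same connected domain $L$, the same covering map $c_r : L \to S^1$, and that $\mathrm{Deck}(L/S^1)$ acts transitively on fibers, the theorem gives that $\bb{L}_K$ is $(L \times_{S^1} T^2, \pi_{T^2})$-lifted Hamiltonian isotopic to $\bb{L}_{K'}$. Finally, since $L \to S^1$ is a connected $(r_1+r_2)$-fold covering of the circle and all such are isomorphic to the standard one $S^1 \xrightarrow{z \mapsto z^{r_1+r_2}} S^1$, the fiber product $L \times_{S^1} T^2$ is identified with $S^1 \times_{S^1} T^2$ in the notation of the statement, so $\bb{L}_K$ and $\bb{L}_{K'}$ are $(S^1 \times_{S^1} T^2, \pi_{T^2})$-lifted Hamiltonian equivalent. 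The isomorphism of immersed Lagrangian Floer cohomologies then follows from the invariance Theorem \ref{thm:inv_thm}, since lifted Hamiltonian isotopy is a special case of lifted Hamiltonian equivalence.

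The main obstacle I anticipate is the bookkeeping needed to check that the surgeries $\bb{L}_K$ and $\bb{L}_{K'}$ genuinely share the same covering datum $c_r : L \to S^1$ up to the identifications required by Theorem \ref{thm:isom_SYZ_mirror}, rather than merely abstractly isomorphic ones — one must be careful that the graded Lagrangian surgery construction, performed at two different subsets $K$ and $K'$ of $\bb{L}_1 \cap \bb{L}_2$, produces multi-sections with a \emph{common} base-point-compatible presentation of the domain. A secondary point requiring care is verifying that the chosen $b$ simultaneously satisfies the integrality condition for \emph{both} surgeries; but since that condition depends only on $b_1, b_2, b$ and not on $K$, this is automatic once we use the same $b$ throughout.
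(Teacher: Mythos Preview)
Your proposal is correct and follows exactly the route the paper indicates: the paper states the corollary as the combination of Theorem \ref{thm:surgery_extension} and Theorem \ref{thm:isom_SYZ_mirror} without giving an explicit proof, and the illustrative example immediately following it walks through precisely the chain of reasoning you spell out (apply the surgery-extension theorem to both $K$ and $K'$, invoke Atiyah's classification to get $\check{\bb{L}}_{K,b}\cong\check{\bb{L}}_{K',b}$, then run Theorem \ref{thm:isom_SYZ_mirror} backwards). Your observation that the transitivity hypothesis is automatic because every connected cover of $S^1$ is normal is the right way to dispatch that assumption; note the minor slip where you write ``double cover'' but mean the $(r_1+r_2)$-fold cover.
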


\section*{Acknowledgment}
We are grateful to Hanwool Bae, Cheol-Hyun Cho, Hansol Hong, Wonbo Jeong, Conan Leung and Cheuk Yu Mak for various useful discussions and for providing us with insightful comments and suggestions. We would like to thank Professor Shing-Tung Yau for his encouragement and interest in our work. Thanks are also due to the anonymous referees for carefully reading an earlier version of this paper, pointing out various errors and giving us a long list of constructive comments and suggestions.

The work of K. Chan described in this paper was substantially supported by grants from the Research Grants Council of the Hong Kong Special Administrative Region, China (Project No. CUHK14302015 $\&$ CUHK14302617). The work of Y.-H. Suen was supported by IBS-R003-D1.

\section{Semi-flat mirror symmetry}\label{sec:review_semi-flatSYZ}

In this section, we review the SYZ transform in the semi-flat setting, following \cite{AP} and \cite{LYZ} (see also \cite{Leung05}, \cite[Section 2]{Chan-Leung10b} or \cite[Section 2]{Chan12}).

\subsection{SYZ mirror construction}

Let $B$ be an $n$-dimensional integral affine manifold, meaning that the transition functions of $B$ belong to the group $\bb{R}^n\rtimes GL(n,\bb{Z})$ of $\bb{Z}$-affine linear maps. Let $\Lambda\subset TB$ and $\Lambda^*\subset T^*B$ be the natural lattice bundles defined by the integral affine structure. More precisely, on a local affine chart $U\subset B$, we define
$$\Lambda(U):=\bigoplus_{j=1}^n\bb{Z}\cdot\pd{}{x^j},\quad \Lambda^*(U):=\bigoplus_{j=1}^n\bb{Z}\cdot dx^j,$$
where $(x^j)$ are affine coordinates of $U$.

We set
$$X := T^*B/\Lambda^*\text{  and  }\check{X} := TB/\Lambda$$
and let $(y^j), (\check{y}_j)$ be fiber coordinates (which are dual to each other) of $X$ and $\check{X}$ respectively. Then $(x^j,y^j)$ and $(x^j,\check{y}_j)$ define a set of local coordinates on $T^*U/\Lambda^*\subset X$ and $TU/\Lambda\subset\check{X}$ respectively. We also let $\pi_B:X\to B$ and $\check{\pi}_B:\check{X}\to B$ be the natural projections.

Equip $X$ with the standard symplectic structure
$$\omega_{\hbar}:=\hbar^{-1}\sum_jdy^j\wedge dx^j,$$
where $\hbar>0$ is a small real parameter. This defines a family of symplectic manifolds $(X,\omega_{\hbar})$. As $\hbar\to 0$, the symplectic volume of $(X,\omega_{\hbar})$ approaches infinity, which is the so-called {\em large volume limit} of the family $\{(X,\omega_{\hbar})\}_{\hbar>0}$.

On the other hand, there is a natural almost complex structure $\check{J}_{\hbar}$ on $\check{X}$ given by
$$\check{J}_{\hbar}\left(\pd{}{x^j}\right)=-\hbar^{-1}\pd{}{\check{y}_j}\text{  and  }\check{J}_{\hbar}\left(\pd{}{\check{y}_j}\right)=\hbar\pd{}{x^j}.$$
It is easy to see that $\check{J}_{\hbar}$ is indeed integrable with local complex coordinates given by $z_j=\check{y}_j+ix^j$. Hence $(\check{X},\check{J}_{\hbar})$ defines a family of complex manifolds approaching the so-called {\em large complex structure limit} as $\hbar \to 0$.

\begin{definition}
$(\check{X},\check{J}_{\hbar})$ is called the {\em SYZ mirror} of $(X,\omega_{\hbar})$.
\end{definition}

As the above limiting processes do not play any role in this paper, by absorbing $\hbar^{-1}$ into the $(x^j)$-coordinates, we will simply assume that $\hbar=1$ throughout this paper. Hence we just write $\omega$ for $\omega_{\hbar}$ and $\check{J}$ for $\check{J}_{\hbar}$.

\subsection{The SYZ transform of branes}

In order for homological mirror symmetry to make sense, one needs to complexify the Fukaya category by equipping Lagrangian submanifolds with unitary local systems \cite{HMS}. Here, we just consider rank 1 local systems on Lagrangian submanifolds.

\begin{definition}\label{def:Lag_imm}
A {\em Lagrangian immersion} $\bb{L}$ of $(X,\omega)$ is a pair $(L,\xi)$, where $L$ is an $n$-dimensional smooth manifold and $\xi:L\to X$ is an immersion with the following properties
\begin{itemize}
\item [a)] $\xi^*\omega=0$.
\item [b)] There is a discrete set of points $S\subset L$ such that $\xi:L\backslash S\to X$ is injective.
\item [c)] For all $p\in X$, the set $\xi^{-1}(p)\cap S\subset L$ is either empty or consists of two points.
\end{itemize}
An {\em A-brane} of $X$ is a pair $(\bb{L},\mathcal{L})$, where $\bb{L}$ is an immersed Lagrangian submanifold of $X$ and $\mathcal{L}$ is a rank 1 unitary local system on $L$.
\end{definition}

We shall focus on the case where $\bb{L}$ is an immersed Lagrangian multi-section of the fibration $\pi_B:X\to B$.

\begin{definition}
An {\em immersed Lagrangian multi-section} of rank $r$ is a triple $\bb{L} := (L, \xi, c_r)$, where $\xi:L\to X$ is a Lagrangian immersion and $c_r:L\to B$ is an $r$-fold unramified covering map such that $\pi_B\circ\xi=c_r$. We also assume that the image of $L$ intersects transversally with each torus fiber.
\end{definition}

\begin{remark}
We remark that $L$ is not necessarily connected.
\end{remark}

We now define the SYZ transform of an immersed Lagrangian multi-section in a semi-flat Lagrangian torus fibration, following \cite{AP, LYZ} (see also \cite{BMP_I, BMP_II} for a very similar exposition).

Let $\mathcal{P}\to X\times_B\check{X}$ be the Poincar\'e line bundle, whose total space is defined as the quotient
$$\cu{P}:= \left( (T^*B\oplus TB)\times\bb{C} \right) / \left(\Lambda^*\oplus\Lambda\right),$$
where the fiberwise action of $\Lambda^*\oplus\Lambda$ on $(T^*B\oplus TB)\times\bb{C}$ is given by
$$(\lambda,\check{\lambda})\cdot(y,\check{y},t) := \left( y+\lambda,\check{y}+\check{\lambda},e^{i\pi(\inner{y,\check{\lambda}}-\inner{\lambda,\check{y}})}\cdot t \right).$$
Define a connection $\nabla_{\mathcal{P}}$ on $\mathcal{P}$ by
$$\nabla_{\mathcal{P}}:=d+i\pi(\inner{y,d\check{y}}-\inner{\check{y},dy}).$$
The section $e^{i\pi(y,\check{y})}$ is invariant under the $\{0\}\oplus\Lambda$ action:
$$(0,\check{\lambda})\cdot(y,\check{y},t)=\left(y,\check{y}+\check{\lambda},e^{i\pi\inner{y,\check{\lambda}+\check{y}}}\right).$$
Hence it descends to a section on $T^*B\times_B\check{X}$. With respect to this frame, the connection $\nabla_{\mathcal{P}}$ can be written as
$$\nabla_{\mathcal{P}}=d+2\pi i\inner{y,d\check{y}}.$$
The remaining action of $\Lambda^*\oplus\{0\}$ then becomes
\begin{align*}
\lambda\cdot[(y,\check{y},e^{i\pi\inner{y,\check{y}}})]_{{\Lambda}}
& = [y+\lambda,\check{y},e^{-i\pi\inner{\lambda,\check{y}}}\cdot e^{i\pi\inner{y,\check{y}}}]_{{\Lambda}}\\
& = e^{-2\pi i\inner{\lambda,\check{y}}}[y+\lambda,\check{y},e^{i\pi\inner{y+\lambda,\check{y}}}]_{\Lambda}.
\end{align*}

Let $\bb{L}=(L,\xi,c_r)$ be an immersed Lagrangian multi-section of rank $r$ and $\cu{L}$ be a $U(1)$-local system on $L$. Define
$$\check{\bb{L}}:=(\pi_{\check{X}})_*\left((\xi\times id_{\check{X}})^*(\cu{P})\otimes(\pi_L^*\cu{L})\right).$$
Note that as the projection map $\pi_{\check{X}}:L\times_B\check{X}\to\check{X}$ is an unramified $r$-fold covering map, $\check{\bb{L}}$ is a vector bundle of rank $r$. The connection on $\cu{P}$ induces a natural connection $\nabla_{\check{\bb{L}}}$ on $\check{\bb{L}}$. The following proposition is standard (see the original papers \cite{AP, LYZ} or \cite[Section 2]{Chan-Leung10b}, \cite[Section 2]{Chan12}):

\begin{proposition}
The connection $\nabla_{\check{\bb{L}}}$ satisfies $(\nabla_{\check{\bb{L}}}^2)^{0,2}=0$ if and only if the immersion $\xi:L\to X$ is Lagrangian.
\end{proposition}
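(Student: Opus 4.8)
The plan is to compute the curvature of the induced connection $\nabla_{\check{\bb{L}}}$ directly in local coordinates and to identify the $(0,2)$-part of its square with the pullback of $\omega$ under $\xi$. First I would work locally: over a small affine chart $U\subset B$, the covering $c_r:L\to B$ splits into $r$ disjoint sheets, and on each sheet $\xi$ is a graph, given by functions $y^j = f^j(x)$ for some locally defined $f^j:U\to\bb{R}$. The condition $\xi^*\omega = 0$ then translates into the closedness of the $1$-form $\sum_j f^j\,dx^j$, i.e.\ $\partial f^j/\partial x^k = \partial f^k/\partial x^j$ for all $j,k$. This is the analytic content of "Lagrangian" that we must match against the curvature computation.

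Next I would write down the connection. On $L\times_B\check{X}$, using the frame $e^{i\pi\inner{y,\check{y}}}$ in which $\nabla_{\mathcal{P}} = d + 2\pi i\inner{y,d\check{y}}$, the pullback $(\xi\times\mathrm{id}_{\check{X}})^*\nabla_{\mathcal{P}}$ becomes $d + 2\pi i\sum_j f^j(x)\,d\check{y}_j$ on each sheet; tensoring with $\pi_L^*\cu{L}$ only adds a closed (indeed flat) $1$-form pulled back from $L$, which can be absorbed by a gauge transformation and contributes nothing to the curvature. Pushing forward along the covering $\pi_{\check{X}}$ just assembles these $r$ local expressions into a rank-$r$ bundle, block-diagonal in the obvious local frame, so the curvature is computed sheet by sheet. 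I would then compute
$$
F_{\nabla_{\check{\bb{L}}}} = d\left(2\pi i\sum_j f^j(x)\,d\check{y}_j\right) = 2\pi i\sum_{j,k}\pd{f^j}{x^k}\,dx^k\wedge d\check{y}_j.
$$
In the complex coordinates $z_j = \check{y}_j + ix^j$ for $\check{J}$, we have $dx^k = \tfrac{1}{2i}(dz_k - d\bar z_k)$ and $d\check{y}_j = \tfrac12(dz_j + d\bar z_j)$, so the $(0,2)$-component of $F$ is proportional to $\sum_{j,k}\pd{f^j}{x^k}\,d\bar z_k\wedge d\bar z_j$, which is the antisymmetrization $\sum_{j<k}\left(\pd{f^j}{x^k} - \pd{f^k}{x^j}\right)d\bar z_k\wedge d\bar z_j$. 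This vanishes identically precisely when $\pd{f^j}{x^k} = \pd{f^k}{x^j}$ for all $j,k$, i.e.\ precisely when $\xi^*\omega = 0$ on that sheet; running over all sheets and all charts gives the equivalence. (The $(2,0)$-part vanishes for the same reason, and the $(1,1)$-part is unconstrained, consistent with $\nabla_{\check{\bb{L}}}$ defining a holomorphic structure exactly in the Lagrangian case.)

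The only genuine subtlety — the step I expect to require the most care — is checking that the push-forward connection $\nabla_{\check{\bb{L}}}$ really is computed sheet-by-sheet, i.e.\ that the local block-diagonal description glues correctly across overlaps of affine charts and across the points where different sheets of $c_r$ come together under $\xi$ (the self-intersection locus $S$). Over the self-intersections the immersion $\xi$ is still a local graph on each branch, so the fiberwise structure of $L\times_B\check{X}\to\check{X}$ as an $r$-fold covering is unaffected and the computation goes through unchanged; over chart overlaps one must verify that the $\bb{Z}$-affine transition maps of $B$, together with the $\Lambda^*\oplus\Lambda$-action defining $\mathcal{P}$, act compatibly so that the curvature $2$-forms patch to a global section of $\mathrm{End}(\check{\bb{L}})\otimes\Omega^2_{\check{X}}$ — but this is exactly the content of the cited standard construction of $\mathcal{P}$ and $\nabla_{\mathcal{P}}$ in \cite{AP, LYZ}, so I would invoke that and keep the local curvature computation above as the heart of the argument.
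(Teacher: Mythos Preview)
Your computation is correct and is exactly the standard argument. Note, however, that the paper does not actually give its own proof of this proposition: it is stated as ``standard'' with a reference to \cite{AP, LYZ} and \cite[Section 2]{Chan-Leung10b}, \cite[Section 2]{Chan12}, so there is nothing in the paper to compare against beyond the local description of $\nabla_{\check{\bb{L}}}$ in \eqref{eqn:rank_r_connection}, which is precisely what you use.

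One small remark: saying the flat local system connection ``can be absorbed by a gauge transformation'' is not quite right globally (its holonomy is gauge-invariant), but this is harmless for your purposes since all you actually need is $d\beta=0$, which you also note; the contribution of $\beta$ to the curvature is zero regardless.
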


Hence $\check{\bb{L}}$ carries a natural holomorphic structure.

\begin{definition}
$(\check{\bb{L}},\nabla_{\check{\bb{L}}})$ is called the {\em SYZ mirror bundle} of the A-brane $(\bb{L},\cu{L})$. We simply write $\check{\bb{L}}$ for short.
\end{definition}

Let us give a more detailed local description of $\check{\bb{L}}$ and $\nabla_{\check{\bb{L}}}$ for the case $r=1$.

We first suppose that $\cu{L}\to L$ is the trivial line bundle equipped with the trivial connection. Let $U$ be an affine chart of $B$. Take a lift of $L\cap T^*U/\Lambda^*\subset X$ to $\widetilde{L}_U\subset T^*U$, and let $\xi_U$ be the defining equation of $\widetilde{L}_U$. The section $e^{i\pi(\xi_U,\check{y})}$ on $\til{L}_U\times_BTU$ induces a section $\check{1}_U$ of $\check{\bb{L}}$ on $\til{L}_U\times_B\check{X}$ by taking its $\Lambda$-equivalence class. With respect to this local frame, the connection $\nabla_{\check{\bb{L}}}$ becomes
$$\nabla_{\check{\bb{L}}}=d+2\pi i\inner{\xi_U,d\check{y}}.$$

We can also compute the unitary and holomorphic transition functions of $(\check{\bb{L}},\nabla_{\check{\bb{L}}})$.
Let $V$ be another affine chart of $B$ such that $U\cap V\neq\phi$. Let $\xi_V$ be the defining equation of the lift $\widetilde{L}_V\subset T^*V$. Since
$$[\inner{\xi_U,dx_U}]_{\Lambda^*}=[\inner{\xi_V,dx_V}]_{\Lambda^*},$$
there exists $\lambda_{UV}\in\Lambda^*|_{U\cap V}$ such that
$$\inner{\xi_U,dx_U}=\inner{\xi_V,dx_V}+\inner{\lambda_{UV},dx_V}.$$
Then we have
$$\check{1}_V=e^{-2\pi i\inner{\lambda_{UV},\check{y}_V}}\check{1}_U.$$
Therefore the unitary transition functions are given by
\begin{equation}\label{eqn:unitary-transition_fcs}
\tau_{UV}(x_V)=e^{2\pi i\inner{\lambda_{UV},\check{y}_V}}.
\end{equation}

To compute the holomorphic one, let $f_U:U\to\mathbb{R}$ be a primitive of $\xi_U$. Then it is easy to check that
\begin{equation}\label{eqn:local_holomorphic frame}
\check{e}_U=e^{-2\pi f_U}\check{1}_U
\end{equation}
defines a local holomorphic frame of $\check{\bb{L}}$. Since $f_U,f_V$ are primitives of $\inner{\xi_U,dx_U},\inner{\xi_V,dx_V}$ respectively, we have
$$f_U(x_U)=f_V(x_V)+\inner{\lambda_{UV},x_V}+c_{UV}$$
for some $c_{UV}\in\bb{R}$. The holomorphic transition functions are then given by
\begin{equation}\label{eqn:holom_transition_fcs}
g_{UV}(z_V)=e^{-2\pi c_{UV}}e^{2\pi i\inner{\lambda_{UV},z_V}},
\end{equation}
where $z_V=\check{y}_V+ix_V$ is a holomorphic coordinate of $TV/\Lambda$.

Now, suppose $\cu{L}\to L$ is an arbitrary $U(1)$-local system. Then $\cu{L}$ carries a natural flat connection $\nabla_{\cu{L}}$. Write
$$\nabla_{\mathcal{L}}=d+2\pi i\beta,\quad\beta\in\Gamma(L,T^*L).$$
Since $\nabla_{\mathcal{L}}^2=0$, $d\beta=0$. Let $b_U(x_U)$ be a primitive of $\beta$ on $U$. Then a local holomorphic frame is given by
$$e^{-2\pi(f_U+ib_U)}\check{1}_U.$$
When $U\cap V\neq\phi$, we have
$$db_U(x_U)=\beta=db_V(x_V)\Rightarrow b_U(x_U)-b_V(x_V)=b_{UV}\in\bb{R}.$$
The holomorphic transition functions then become
$$g_{UV}(z_V)=e^{-2\pi(c_{UV}+ib_{UV})}e^{2\pi i(\lambda_{UV},z_V)}.$$
The connection $\nabla_{\check{\bb{L}}}$ becomes
$$\nabla_{\check{\bb{L}}}=d+2\pi i\inner{\xi_U,d\check{y}}+2\pi i\beta|_U.$$

For a general $r$, one can also write down the connection $\nabla_{\check{\mathbb{L}}}$ in terms of the data coming from the Lagrangian brane:
\begin{equation}\label{eqn:rank_r_connection}
\begin{split}
\nabla_{\check{\mathbb{L}}} = & d + 2\pi i\sum_{j=1}^n
\begin{pmatrix}
\xi_{U_{1,j}}(x_1) & 0 & 0 & \dots  & 0 \\
0 & \xi_{U_{2,j}}(x_2) & 0 & \dots  & 0 \\
\vdots & \vdots & \vdots & \ddots & \vdots \\
0 & 0 & 0 & \dots  & \xi_{U_{r,j}}(x_r)
\end{pmatrix}d\check{y}_U^j\\
& \qquad + 2\pi i\sum_{j=1}^n
\begin{pmatrix}
\beta_{U_{1,j}}(x_1) & 0 & 0 & \dots  & 0 \\
0 & \beta_{U_{2,j}}(x_2) & 0 & \dots  & 0 \\
\vdots & \vdots & \vdots & \ddots & \vdots \\
0 & 0 & 0 & \dots  & \beta_{U_{r,j}}(x_r)
\end{pmatrix}d\check{x}_U^j,
\end{split}
\end{equation}
where $c_r^{-1}(U)=\coprod_{k=1}^rU_k$ and $x_k\in U_k$, $k=1,\dots,r$ are the preimages of $x\in U\subset B$.

\section{Surgery-extension correspondence for $T^2$}\label{sec:surgery_extension}

Let $\bb{L}_1$ and $\bb{L}_2$ be two graded immersed Lagrangian multi-sections and $\check{\bb{L}}_1$ and $\check{\bb{L}}_2$ be their mirror bundles. It is believed that performing Lagrangian surgeries at index 1 intersection points of $\bb{L}_1$ and $\bb{L}_2$ corresponds to forming a nontrivial extension of $\check{\bb{L}}_1$ and $\check{\bb{L}}_2$. More precisely, let $K:=\{p_1,\cdots,p_k\}\subset CF(\bb{L}_1,\bb{L}_2)$ be a collection of index 1 intersection points of $\bb{L}_1$ and $\bb{L}_2$. We perform Lagrangian surgery at each point in $K$ (see Figure \ref{fig:Lag_surgery}) to obtain another graded immersed Lagrangian multi-section $\mathbb{L}_K:=\bb{L}_2\sharp_K \bb{L}_1$. Then the mirror bundle $\check{\mathbb{L}}_K$ of $\mathbb{L}_K$ should fit in an exact sequence:
$$0\to\check{\bb{L}}_2\to\check{\mathbb{L}}_K\to\check{\bb{L}}_1\to 0.$$

In this section, we study this relation on the symplectic torus $T^2$ with standard symplectic structure and its mirror elliptic curve. We will see that the Lagrangian surgery and extension correspondence cannot be true in general if we do not equip the Lagrangians with $U(1)$-local systems.

\begin{figure}[H]
\center
\includegraphics[width=105mm]{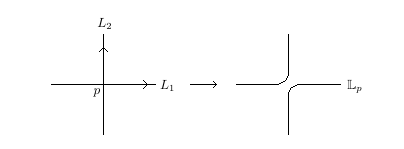}
\caption{1-dimensional Lagrangian surgery at an index 1 intersection point.}
\label{fig:Lag_surgery}
\end{figure}

Let $X:=T^2=S^1\times S^1$ be the product torus with standard symplectic structure given by
$$\omega:=dy\wedge dx.$$
We begin with describing the SYZ transform of a general immersed Lagrangian multi-section in $X$.

Let $B=S^1$ and $\pi_1:X\to S^1$ be the projection onto the first factor. Let $\varphi:\mathbb{R}\to\mathbb{R}$ be a smooth function such that
$$\varphi(x+r)=\varphi(x)+d,$$
where $d \in \mathbb{Z}$ and $r \in \mathbb{Z}_{>0}$.
Then $\varphi$ descends to an immersed Lagrangian multi-section $\bb{L}_{\varphi}$ of $\pi_1:X\to B$ which intersect the zero section $|d|$ times and each fiber $r$ times. Since $\varphi$ is smooth, the immersed Lagrangian multi-section $\bb{L}_{\varphi}$ intersects the fibers of $\pi_1:X\to B$ transversally. Clearly, every immersed Lagrangian multi-section with connected domain and which intersects the fibers transversally arises in this manner.

Let $\{U,V\}$ be the following affine cover of the base $B=S^1$:
\begin{align*}
(0,1) & \to V \subset S^1,\quad x\mapsto e^{2\pi ix},\\
(0,1) & \to U \subset S^1,\quad x'\mapsto e^{2\pi i(x'+\epsilon)},
\end{align*}
where $\epsilon\in(0,1)$ is fixed. Write $U\cap V=W_1\amalg W_2$. The SYZ mirror bundle $\check{\bb{L}}_{\varphi}$ of $\bb{L}_{\varphi}$ is a rank $r$ vector bundle with $U(r)$-connection (cf. \eqref{eqn:rank_r_connection})
$$
\nabla_{\check{\mathbb{L}}_{\varphi}}=d+2\pi i
\begin{pmatrix}
\varphi(x) & 0 & 0 & \dots  & 0 \\
0 & \varphi(x+1) & 0 & \dots  & 0 \\
\vdots & \vdots & \vdots & \ddots & \vdots \\
0 & 0 & 0 & \dots  & \varphi(x+r-1)
\end{pmatrix}
d\check{y}, \quad (x,\check{y})\in TV/\Lambda.
$$
With the complex structure $z=\check{y}+ix$, the degree of $\check{\bb{L}}_{\varphi}$ is given by
\begin{align*}
\frac{i}{2\pi}\int_0^1\int_0^1\frac{d}{dx}\left(2\pi i\sum_{j=0}^{r-1}\varphi(x+j)\right)dx\wedge d\check{y}
& = -\int_0^r\frac{d}{dx}\varphi(x)dx\\
& = \varphi(0)-\varphi(r) = -d.
\end{align*}

Let us write down the unitary and holomorphic transition functions of $\check{\bb{L}}_{\varphi}$ from $TV/\Lambda$ to $TU/\Lambda$. Let
$$c_r^{-1}(U) = \coprod_{j=1}^r U_j \text{ and } c_r^{-1}(V) = \coprod_{j=1}^r V_j.$$
By renaming, we can assume that $U_1\cap V_1,V_1\cap U_2,U_2\cap V_2,\dots,V_r\cap U_1$ are non-empty connected subsets of $L\times_B\check{X}$.
Then we can assume that
$$c_r^{-1}(W_1) = \coprod_{j=1}^r U_j\cap V_j \text{ and } c_r^{-1}(W_2) = \coprod_{j=1}^r V_j\cap U_{j+1},$$
where we put $U_{r+1}=U_1$.
The unitary transition function is given by the identity matrix on $TW_1/\Lambda$ and by
$$\tau_{UV}(x,\check{y}) = \bm{O}{e^{2\pi id\check{y}}}{I_{(r-1)\times(r-1)}}{O}$$
on $TW_2/\Lambda$ (cf. \eqref{eqn:unitary-transition_fcs}).

Choose a point $x_0\in W_1\subset V$. For each $j=0,\dots,r-1$, let $f_j:\bb{R}\to\bb{R}$ be given by
\begin{equation}\label{eqn:f_j}
f_j(x) := \int_{x_0}^{x+j}\varphi(u)du,
\end{equation}
which is a primitive of $\varphi(x+j)$. A local holomorphic frame for $\check{\bb{L}}_{\varphi}$ on the chart $TV/\Lambda$ is then given by (cf. \eqref{eqn:local_holomorphic frame}):
$$\{e^{-2\pi f_0(x)}\check{1}_0(x,\check{y}),\dots,e^{-2\pi f_{r-1}(x)}\check{1}_{r-1}(x,\check{y})\},$$
where $(x,\check{y})\in TV/\Lambda$.

On the $U$-chart, we have
\begin{equation}\label{eqn:f'_j}
f'_j(x')=\int_{x_0'+\epsilon}^{x'+\epsilon+j}\varphi(u)du.
\end{equation}
On $W_1$, $x'$ and $x$ are related by $x'(x)=x-\epsilon$. Applying this coordinate change to \eqref{eqn:f'_j} and using \eqref{eqn:f_j}, we obtain
$$f'_j(x')=\int_{x_0'+\epsilon}^{x'+\epsilon+j}\varphi(u)du=\int_{x_0}^{x+j}\varphi(u)du=f_j(x).$$
On $W_2$, we have $x'(x)=x-\epsilon+1$, so for $j=0,\dots,r-2$, \eqref{eqn:f'_j} becomes
$$f'_j(x')=\int_{x_0}^{x+j+1}\varphi(u)du=f_{j+1}(x),$$
while for $j=r-1$, we have
\begin{equation}\label{eqn:f'_r-1}
f'_{r-1}(x')=\int_{x_0}^{x+r}\varphi(u)du=f_0(x)+\int_{x}^{x+r}\varphi(u)du.
\end{equation}
Since
$$\frac{d}{dx}\int_{x}^{x+r}\varphi(u)du=\varphi(x+r)-\varphi(x)=d,$$
we have
\begin{equation}\label{eqn:int_varphi}
\int_{x}^{x+r}\varphi(u)du=dx+\int_0^r\varphi(u)du.
\end{equation}
By substituting \eqref{eqn:int_varphi} to \eqref{eqn:f'_r-1}, we see that the holomorphic transition functions are given by the identity matrix on $TW_1/\Lambda$ and by
$$g_{UV}(z)=\bm{O}{e^{-2\pi a}e^{2\pi idz}}{I_{(r-1)\times(r-1)}}{O}$$
on $TW_2/\Lambda$ (cf. \eqref{eqn:holom_transition_fcs}), where $a$ is given by
$$a=\int_0^r\varphi(u)du.$$

Suppose now we enrich $\bb{L}$ by a $U(1)$-local system $(\cu{L},\nabla_{\cu{L}})$. Since the domain of $\bb{L}$ is a circle, the connection $\nabla_{\cu{L}}$ can always be written as
$$d+2\pi i\frac{b}{r}dx,$$
for some $b\in\bb{R}$. Hence the transition functions of the SYZ mirror bundle of $(\bb{L},\cu{L})$ are given by
$$g_{UV}(z)=\bm{O}{e^{-2\pi (a+ib)}e^{2\pi idz}}{I_{(r-1)\times(r-1)}}{O}.$$

\begin{example}
Let $r,d\in\bb{Z}$ with $\text{gcd}(r,d)=1$. Let $\varphi:\bb{R}\to\bb{R}$ be the straight line
$$\varphi(x)=\frac{d}{r}x+\frac{c}{r},\quad r>0,c\in\bb{R}.$$
Then it descends to the Lagrangian multi-section
$$\bb{L}_{r,d}[c]=\{(e^{2\pi irx},e^{2\pi i(dx+c)})\in X:x\in\bb{R}\}.$$
One computes that
$$\int_0^r\varphi(x)dx=\frac{rd}{2}+c.$$
The transition function of the SYZ mirror bundle is given by
$$g_{UV}(z)=\bm{O}{e^{-\pi dr}e^{-2\pi c}e^{2\pi idz}}{I_{(r-1)\times(r-1)}}{O}.$$
\end{example}

Let
\begin{align*}
\bb{L}_1 & := \bb{L}_{r_1,d_1}[c_1] := \{(e^{2\pi ir_1x},e^{2\pi i(d_1x+c_1)})\in X:x\in\mathbb{R}\},\\
\bb{L}_2 & := \bb{L}_{r_2,d_2}[c_2] := \{(e^{2\pi ir_2x},e^{2\pi i(d_2x+c_2)})\in X:x\in\mathbb{R}\}
\end{align*}
be two distinct (embedded) Lagrangian multi-sections of $X$, where $r_1,r_2,d_1,d_2$ are integers such that $\text{gcd}(r_1,d_1)=\text{gcd}(r_2,d_2)=1$ and $c_1,c_2\in\bb{R}$. They intersect at $|r_1d_2-r_2d_1|$ points. Let us assume that $r_1d_2>r_2d_1$. The base coordinates of the intersection points are given by the equivalence classes of
$$x_{k,k'}:=\frac{r_2(c_1+k)-r_1(c_2+k')}{r_1d_2-r_2d_1},\quad k,k'\in\mathbb{Z}.$$
We orientate $\bb{L}_1,\bb{L}_2$ such that both of them are pointing towards ``right" in a fundamental domain of $X$. Since $r_1d_2>r_2d_1$, using the degree convention of \cite{Abouzaid_Fukaya_categories_of_higher_genus}, all generators of $CF(\bb{L}_1,\bb{L}_2)$ are of index 1.

Let $K$ be a subset of $\bb{L}_1\cap \bb{L}_2$. Then we can perform Lagrangian surgery at each point in $K$ to obtain a (graded) Lagrangian multi-section $\mathbb{L}_K:=\bb{L}_2\sharp_K\bb{L}_1$ (possibly with disconnected domain).

\begin{remark}
For each surgery point, we have a parameter $\epsilon>0$ which controls the size of the surgery. The surgery $\bb{L}_K$ we discuss here of course consists of the surgery parameters. However, these parameters do not play a role as we will see in the proof of our main theorem (Theorem \ref{thm:surgery_extension}).
\end{remark}

Now, we equip the domains of $\bb{L}_1,\bb{L}_2,\bb{L}_K$ with the $U(1)$-local systems
\begin{align*}
\cu{L}_{b_1} & : d+2\pi i\frac{b_1}{r_1}dx,\\
\cu{L}_{b_2} & : d+2\pi i\frac{b_2}{r_2}dx,\\
\cu{L}_b & : d+2\pi i\frac{b}{r_1+r_2}dx
\end{align*}
respectively. Here $b_1,b_2,b\in\bb{R}$. We denote the Lagrangian A-branes $(\bb{L}_1,\cu{L}_{b_1}),(\bb{L}_2,\cu{L}_{b_2}),\\(\bb{L}_K,\cu{L}_b)$ by $\bb{L}_{1,b_1},\bb{L}_{2,b_2},\bb{L}_{K,b}$ respectively. There is a simple obstruction for the SYZ mirror bundle $\check{\bb{L}}_{K,b}$ of $\bb{L}_{K,b}$ to be an extension of $\check{\bb{L}}_{1,b_1}$ and $\check{\bb{L}}_{2,b_2}$. Note that if $\check{\bb{L}}_{K,b}$ is an extension of $\check{\bb{L}}_{1,b_1}$ by $\check{\bb{L}}_{2,b_2}$, then $\det(\check{\bb{L}}_{K,b})\cong\det(\check{\bb{L}}_{1,b_1})\otimes\det(\check{\bb{L}}_{2,b_2})$ as holomorphic line bundles.

\begin{proposition}\label{prop:same_det}
Let $(\bb{L}_{r_1,d_1}[c_1],\mathcal{L}_1)$ and $(\bb{L}_{r_2,d_2}[c_2],\mathcal{L}_2)$ be Lagrangian A-branes with local systems $$\mathcal{L}_{b_1}:d+2\pi i\frac{b_1}{r_1}dx,\quad \mathcal{L}_{b_2}:d+2\pi i\frac{b_2}{r_2}dx.$$
Let $\bb{L}_1',\dots,\bb{L}_M'$ be the components of an immersed Lagrangian multi-section $\bb{L}$ of rank $r_1+r_2$ and degree $-d_1-d_2$, equipped with $U(1)$-local systems
$$\mathcal{L}_1':d+2\pi i\frac{b_1'}{r_1'}dx,\dots,\mathcal{L}_M':d+2\pi i\frac{b_M'}{r_M'}dx.$$
Let $\varphi_j':\bb{R}\to\bb{R}$ be the defining equation of $\bb{L}_j'$. Put
$$a_j'=\int_{0}^{r_j'}\varphi_j'(x)dx,\quad j=1,\dots, M,$$
Then $\det(\check{\bb{L}}_{b'})\cong\det(\check{\bb{L}}_{r_1,d_1}[c_1]_{b_1})\otimes\det(\check{\bb{L}}_{r_2,d_2}[c_2]_{b_2})$ as holomorphic line bundles if and only if
$$\sum_{j=1}^Ma_j'-\frac{r_1d_1}{2}-\frac{r_2d_2}{2}-c_1-c_2\in\bb{Z}\text{ and  }b_1+b_2-\frac{M}{2}-\sum_{j=1}^Mb_j'\in\bb{Z}.$$
\end{proposition}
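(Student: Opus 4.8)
The plan is to make the statement completely explicit on the mirror side: compute the two determinant line bundles over the mirror elliptic curve $\check{X}=TB/\Lambda\cong\bb{C}/(\bb{Z}+\bb{Z}i)$ as transition data on the cover $\{TU/\Lambda,TV/\Lambda\}$, and then feed this into the classification of holomorphic line bundles of a fixed degree on $\check{X}$.

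\textit{Step 1: the two transition functions.} Since $\bb{L}$ has disconnected domain $L=L_1'\amalg\cdots\amalg L_M'$, the SYZ transform is additive, $\check{\bb{L}}_{b'}=\bigoplus_{j=1}^M\check{\bb{L}}_j'$, so $\det(\check{\bb{L}}_{b'})=\bigotimes_{j=1}^M\det(\check{\bb{L}}_j')$. By the rank--$r$ description recalled above, each $\check{\bb{L}}_j'$ has transition function equal to the identity on $TW_1/\Lambda$ and, on $TW_2/\Lambda$, equal to the $r_j'\times r_j'$ companion-type block $\bm{O}{e^{-2\pi(a_j'+ib_j')}e^{2\pi id_j'z}}{I}{O}$, where $d_j'$ is the integer with $\varphi_j'(x+r_j')=\varphi_j'(x)+d_j'$. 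Such a block has determinant $(-1)^{r_j'-1}$ times the scalar in its top-right corner, so, multiplying over $j$ and using the hypotheses $\sum_j r_j'=r_1+r_2$ and $\sum_j d_j'=d_1+d_2$ (the latter being exactly $\deg\bb{L}=-d_1-d_2$), we get that $\det(\check{\bb{L}}_{b'})$ is trivial on $TW_1/\Lambda$ and, on $TW_2/\Lambda$, is multiplication by $C_1\,e^{2\pi i(d_1+d_2)z}$ with
$$C_1=(-1)^{r_1+r_2-M}\,e^{-2\pi\sum_j a_j'}\,e^{-2\pi i\sum_j b_j'}.$$
Running the identical computation for $\det(\check{\bb{L}}_{r_1,d_1}[c_1]_{b_1})\otimes\det(\check{\bb{L}}_{r_2,d_2}[c_2]_{b_2})$, using $\int_0^{r_k}\varphi_{r_k,d_k}(u)\,du=\tfrac{r_kd_k}{2}+c_k$ from the Example together with the local-system twist by $b_k$, produces a line bundle of exactly the same shape, with constant
$$C_2=(-1)^{r_1+r_2}\,e^{-\pi(r_1d_1+r_2d_2)}\,e^{-2\pi(c_1+c_2)}\,e^{-2\pi i(b_1+b_2)}.$$

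\textit{Step 2: classification.} Both bundles have degree $-(d_1+d_2)$ and both are of the form $L_C$: trivial transition on $TW_1/\Lambda$ and multiplication by $C\,e^{2\pi i(d_1+d_2)z}$ on $TW_2/\Lambda$, $C\in\bb{C}^{\times}$. The key claim is that $L_C\cong L_{C'}$ as holomorphic line bundles if and only if $C/C'=e^{2\pi k}$ for some $k\in\bb{Z}$; this is just $\mathrm{Pic}^{-(d_1+d_2)}(\check{X})\cong\check{X}$ rendered in these coordinates. An isomorphism is a pair of nowhere-vanishing holomorphic functions $h_U,h_V$ on the two annuli $TU/\Lambda,TV/\Lambda$ that agree on $TW_1/\Lambda$; they therefore glue to a single $h\in\mathcal{O}^{\times}(A)$, where $A$ is the annulus obtained by gluing the two charts along $W_1$ only, and the remaining condition on $TW_2/\Lambda$ becomes $h(z)/h(z+i)\equiv C'/C$ (the shift by the lattice generator $i$ corresponds to going once around the base $B=S^1$). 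Expanding $h$ into its Laurent series on $A$ forces $h(z)=\lambda\,e^{2\pi ikz}$ with $\lambda\in\bb{C}^{\times}$, $k\in\bb{Z}$, whence $C'/C=e^{2\pi k}$.

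\textit{Step 3: comparison.} A direct simplification gives
$$\frac{C_1}{C_2}=(-1)^{M}\,e^{-2\pi\left(\sum_j a_j'-\frac{r_1d_1}{2}-\frac{r_2d_2}{2}-c_1-c_2\right)}\,e^{2\pi i\left(b_1+b_2-\sum_j b_j'\right)}=e^{-2\pi P}\,e^{2\pi i\left(Q+\frac{M}{2}\right)},$$
where $P=\sum_j a_j'-\tfrac{r_1d_1}{2}-\tfrac{r_2d_2}{2}-c_1-c_2$ and $Q=b_1+b_2-\sum_j b_j'$, using $(-1)^M=e^{i\pi M}$. By Step 2 this equals some $e^{2\pi k}$, $k\in\bb{Z}$, if and only if it is a positive real number, i.e. $Q+\tfrac{M}{2}\in\bb{Z}$, and its absolute value is of that form, i.e. $P\in\bb{Z}$. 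The first condition is precisely $b_1+b_2-\tfrac{M}{2}-\sum_j b_j'\in\bb{Z}$ (since the two expressions differ by $M\in\bb{Z}$), and the second is $\sum_j a_j'-\tfrac{r_1d_1}{2}-\tfrac{r_2d_2}{2}-c_1-c_2\in\bb{Z}$, which are exactly the two asserted conditions.

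The only genuine subtlety is in Step 2: making the chart bookkeeping on $\check{X}$ precise — in particular identifying how the coordinate on $TU/\Lambda$ is glued to that on $TV/\Lambda$ across $W_1$ as opposed to $W_2$, so that the isomorphism condition comes out exactly as $h(z)/h(z+i)\equiv C'/C$ with the lattice generator $i$ and no stray $\epsilon$-dependence. Everything else — the sign of the determinant of a companion block, the exponents in the Example, and the final arithmetic comparison — is routine once set up carefully.
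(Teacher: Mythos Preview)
Your proof is correct and is essentially the paper's argument in different packaging: the paper forms the degree-zero bundle $\det(\check{\bb{L}}_{b'})\otimes\det(\check{\bb{L}}_{1,b_1})^{-1}\otimes\det(\check{\bb{L}}_{2,b_2})^{-1}$, writes its factor of automorphy over $\bb{C}^\times$ (obtaining exactly your constant $C_1/C_2$, up to conjugating the phase), and then runs the identical Laurent-series argument --- citing an external reference for the automorphy formalism where you instead glue the two \v{C}ech charts along $W_1$ into an annulus by hand. Your flagged ``subtlety'' that the $W_2$-gluing is $z\mapsto z+i$ is precisely the dictionary between the two pictures, and once that is said the arguments coincide line for line.
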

\begin{proof}
Note that
$$\sum_{j=1}^Mr_j'=r_1+r_2,\quad\sum_{j=1}^Md_j'=d_1+d_2.$$
Using the above computations of the holomorphic transition functions, we see that the factor of automorphy of $\det(\bb{L})\otimes\det(\check{\bb{L}}_1)^{-1}\otimes\det(\check{\bb{L}}_2)^{-1}$ over $\bb{C}^\times$ (instead of over $\bb{C}$) is generated by
\begin{align*}
A(1,u) & = (-1)^M e^{-2\pi\left( \sum_{j=1}^Ma_j'-\frac{r_1d_1}{2}-\frac{r_2d_2}{2}-c_1-c_2 \right)}e^{2\pi i\left( \sum_{j=1}^Mb_j-b_1-b_2 \right)}\\
& = e^{-2\pi\left( \sum_{j=1}^Ma_j'-\frac{r_1d_1}{2}-\frac{r_2d_2}{2}-c_1-c_2 \right)}e^{2\pi i\left( \sum_{j=1}^M b_j+\frac{M}{2}-b_1-b_2 \right)},
\end{align*}
where $u=e^{2\pi i z}$ (see e.g. Section 4.3 in \cite{factor_of_automorphy} for the precise relation between transition functions and factors of automorphy over $\bb{C}^\times$). 

By Theorem 4.11 in \cite{factor_of_automorphy}, $A(1,u)$ is gauge equivalent to $1$ if and only if
\begin{equation}\label{eqn:automorphy_gauge_equiv}
B(e^{-2\pi}u)=A(1,u)B(u)
\end{equation}
for some non-zero holomorphic function $B:\bb{C}^{\times}\to\bb{C}^{\times}$. Let $\sum_{-\infty}^{\infty}B_ku^k$ be the Laurent series expansion of $B$. Then \eqref{eqn:automorphy_gauge_equiv} holds if and only if
$$B_k(e^{-2\pi k}-A(1,u))=0, \text{ for all }k.$$
Since $B(u)$ is nonzero, \eqref{eqn:automorphy_gauge_equiv} holds if and only if $A(1,u)=e^{-2\pi N}$ for some integer $N$ and $B_k=0$ for all $k\neq N$, which is equivalent to saying that
$$\sum_{j=1}^Ma_j'-\frac{r_1d_1}{2}-\frac{r_2d_2}{2}-c_1-c_2=N\in\bb{Z}\text{ and }b_1+b_2 - \frac{M}{2}-\sum_{j=1}^Mb_j'\in\bb{Z}.$$
\end{proof}

We refer to the conditions
$$\sum_{j=1}^Ma_j'-\frac{r_1d_1}{2}-\frac{r_2d_2}{2}-c_1-c_2\in\bb{Z}\text{ and  }b_1+b_2-\frac{M}{2}-\sum_{j=1}^Mb_j'\in\bb{Z}$$
as the first and second integrality condition for the triple $(\bb{L}_{r_1,d_1}[c_1]_{b_1},\bb{L}_{r_2,d_2}[c_2]_{b_2},\bb{L}_{b'})$ respectively.

Proposition \ref{prop:same_det} gives a necessary condition for the surgery-extension correspondence to hold. Next, we prove that under certain assumptions on the surgery $\bb{L}_K$, the second integrality condition is also sufficient.

\begin{theorem}\label{thm:surgery_extension}
Let $r_1,d_1,r_2,d_2$ be integers satisfying $r_1d_2>r_2d_1$ and
$$\text{gcd}(r_1,d_1)=\text{gcd}(r_2,d_2)=\text{gcd}(r_1+r_2,d_1+d_2)=1.$$
Let $\bb{L}_1:=\bb{L}_{r_1,d_1}[c_1]$, $\bb{L}_2:=\bb{L}_{r_2,d_2}[c_2]$ and $K\subset \bb{L}_1\cap \bb{L}_2$ such that the (graded) Lagrangian surgery $\mathbb{L}_K:=\bb{L}_2\sharp_K\bb{L}_1$ has connected domain and is equipped with the $U(1)$-local system
$$\mathcal{L}_{b}:d+2\pi i\frac{b}{r_1+r_2}dx,\quad b\in\bb{R}.$$
Then the SYZ mirror bundle $\check{\mathbb{L}}_{K,b}$ of the Lagrangian A-brane $\bb{L}_{K,b}$ is an extension of $\check{\bb{L}}_{1,b_1}$ by $\check{\bb{L}}_{2,b_2}$ if and only if $b$ satisfies
$$b_1+b_2+\frac{1}{2}-b\in\bb{Z}.$$
\end{theorem}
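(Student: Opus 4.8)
The plan is to work directly with factors of automorphy, exactly as in the proof of Proposition \ref{prop:same_det}, but now keeping track of the full (non-abelian) extension data rather than just the determinant. First I would set up an explicit model for $\mathbb{L}_K = \bb{L}_2\sharp_K\bb{L}_1$: since its domain is connected and it intersects each fiber $r_1+r_2$ times, it is of the form $\bb{L}_\varphi$ for a smooth $\varphi:\bb{R}\to\bb{R}$ with $\varphi(x+r_1+r_2)=\varphi(x)+d_1+d_2$, which away from the surgery regions agrees with the piecewise-linear function built from $\tfrac{d_1}{r_1}x+\tfrac{c_1}{r_1}$ and $\tfrac{d_2}{r_2}x+\tfrac{c_2}{r_2}$. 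The key point (to be stressed, per the Remark) is that the surgery parameters $\epsilon$ only affect $\varphi$ on small intervals and hence only change $a = \int_0^{r_1+r_2}\varphi(u)\,du$ by a continuously-varying amount that is absorbed into a gauge transformation; so the holomorphic isomorphism class of $\check{\mathbb{L}}_{K,b}$ depends only on the combinatorial data $(r_i,d_i,c_i,b)$ and the condition we must extract is a discrete (integrality) one.

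Next I would write down, on the affine cover $\{U,V\}$ of $B=S^1$ with $U\cap V = W_1\amalg W_2$, the $(r_1+r_2)\times(r_1+r_2)$ holomorphic transition function $g_{UV}$ of $\check{\mathbb{L}}_{K,b}$: identity on $TW_1/\Lambda$ and a single "companion-type" block on $TW_2/\Lambda$ carrying the factor $e^{-2\pi(a+ib)}e^{2\pi i(d_1+d_2)z}$, exactly as computed in the text for a general $\bb{L}_\varphi$. Simultaneously I would write the transition functions for $\check{\bb{L}}_{1,b_1}\oplus\check{\bb{L}}_{2,b_2}$ in a compatible block form. An extension $0\to\check{\bb{L}}_{2,b_2}\to E\to\check{\bb{L}}_{1,b_1}\to 0$ is, up to isomorphism, given by an upper-triangular block transition function whose diagonal blocks are those of $\check{\bb{L}}_{1,b_1},\check{\bb{L}}_{2,b_2}$ and whose off-diagonal block is a Čech $1$-cochain representing a class in $H^1(\check X,\mathcal{H}om(\check{\bb{L}}_{1,b_1},\check{\bb{L}}_{2,b_2}))=\mathrm{Ext}^1(\check{\bb{L}}_{1,b_1},\check{\bb{L}}_{2,b_2})$. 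So I must show: $\check{\mathbb{L}}_{K,b}$ is isomorphic to such an upper-triangular bundle for some (necessarily nonzero, by indecomposability) extension class if and only if $b_1+b_2+\tfrac12-b\in\bb{Z}$.

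For the "only if" direction I would invoke Proposition \ref{prop:same_det} with $M=1$ (connected domain), $r_1'=r_1+r_2$, $d_1'=d_1+d_2$, $b_1'=b$, $a_1'=a$: an extension forces $\det\check{\mathbb{L}}_{K,b}\cong\det\check{\bb{L}}_{1,b_1}\otimes\det\check{\bb{L}}_{2,b_2}$, and the second integrality condition of that proposition reads precisely $b_1+b_2-\tfrac12-b\in\bb{Z}$, i.e. $b_1+b_2+\tfrac12-b\in\bb{Z}$ (the first integrality condition will be seen to hold automatically here because $a$ — computed from the surgered $\varphi$ — equals $\tfrac{r_1d_1}{2}+\tfrac{r_2d_2}{2}+c_1+c_2$ up to a $\bb{Z}$ that is fixed by choosing the surgery data appropriately; this bookkeeping is where I would be careful). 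For the "if" direction, assuming the integrality condition holds, I would produce an explicit gauge transformation (a block-upper-triangular matrix of holomorphic functions on $TU/\Lambda$, $TV/\Lambda$) conjugating $g_{UV}$ of $\check{\mathbb{L}}_{K,b}$ into upper-triangular block form with the correct diagonal blocks; the existence of such a gauge transformation is again reduced, via a Laurent-series argument as in Theorem 4.11 of \cite{factor_of_automorphy} used in Proposition \ref{prop:same_det}, to the vanishing of an integer that is exactly $b_1+b_2+\tfrac12-b$. The nonvanishing of the resulting off-diagonal class (so that the sequence does not split) follows because a split bundle is decomposable while $\check{\mathbb{L}}_{K,b}$ is indecomposable by Atiyah's classification, its rank and degree being coprime by the gcd hypothesis $\gcd(r_1+r_2,d_1+d_2)=1$.

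The main obstacle I anticipate is the "if" direction: writing the gauge transformation explicitly in the companion-block coordinates coming from the $r_1+r_2$ sheets of $c_r$ and matching it to the two-sheeted decomposition of the split bundle $\check{\bb{L}}_{1,b_1}\oplus\check{\bb{L}}_{2,b_2}$ is a genuine (if ultimately linear-algebraic) computation, and one must verify that the off-diagonal Laurent-series obstruction reduces to the single scalar integrality condition rather than a family of them — this is where the coprimality assumptions and the connectedness of the surgered domain do real work, collapsing what could be a multi-dimensional $\mathrm{Ext}$ to the one-dimensional situation where the condition is clean.
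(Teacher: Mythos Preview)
Your ``only if'' direction matches the paper's: both invoke Proposition~\ref{prop:same_det} with $M=1$. However, you underestimate the first integrality condition. The claim that $a=\int_0^{r_1+r_2}\varphi\,dx$ lies in $\tfrac{r_1d_1}{2}+\tfrac{r_2d_2}{2}+c_1+c_2+\bb{Z}$ is not bookkeeping: the paper proves it as a separate lemma (Lemma~\ref{lem:area}) via a genuine geometric area computation, cutting the region under the piecewise-linear curve into pieces and showing the leftover ``white'' region has integral symplectic area because it bounds a $2$-cycle in $T^2$. Your remark that the surgery parameters change $a$ continuously and this is ``absorbed into a gauge transformation'' is also not right: a continuous nontrivial change in $a$ would move the determinant line bundle in its moduli, so the correct observation (made in the paper) is that the symmetry of the surgery profile makes $a$ \emph{independent} of $\epsilon$.

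Your ``if'' direction takes a genuinely different route from the paper, and this is where the real gap lies. The paper does \emph{not} construct an explicit gauge transformation. Instead it argues abstractly: the gcd hypotheses make all three bundles stable; Lemma~\ref{lem:surjective} (quoted from \cite{exact_sequence_stable_bundle}) then produces a surjection $\check{\bb{L}}_{K,b}\twoheadrightarrow\check{\bb{L}}_{1,b_1}$; and a flat-family deformation argument (Lemma~\ref{lem:flat_family_limit}) combined with Atiyah's classification forces the kernel to be isomorphic to $\check{\bb{L}}_{2,b_2}$ once the determinants agree. Your proposed reduction of the matrix-valued gauge equation to the single scalar condition $b_1+b_2+\tfrac12-b\in\bb{Z}$ via a Laurent-series argument is precisely the hard step, and you have not carried it out. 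In rank one (Proposition~\ref{prop:same_det}) the factor of automorphy is scalar and the argument is immediate, but conjugating a companion $(r_1+r_2)\times(r_1+r_2)$ block into upper-triangular form with two prescribed smaller companion blocks on the diagonal is a system of functional equations that does not obviously collapse to one scalar obstruction. You correctly flag this as the main obstacle, but flagging it is not resolving it; the paper's use of stability theory and Atiyah's uniqueness is exactly what lets one bypass this computation entirely.
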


To prove this theorem, we need some results on semistable vector bundles on algebraic curves from \cite{exact_sequence_stable_bundle}:

\begin{lemma}[Lemma 1.4 and Proposition 2.3 in \cite{exact_sequence_stable_bundle}]\label{lem:surjective}
Let $F,G$ be polystable vector bundles over an elliptic curve $X$ with $\text{rk}(F)\geq \text{rk}(G)$ and $\mu(F)<\mu(G)$. Assume that no two among the
indecomposable factors of $F$ (resp. of $G$) are isomorphic. Define
\begin{align*}
U & := \{f\in\text{Hom}(F,G): \text{rk}(\text{Im}(f))=t, \deg(\text{Im}(F))=h\},\\
t & := \max_{f\in\text{Hom}(F,G)} \text{rk}(\text{Im}(f)),\\
h & := \max_{f\in\text{Hom}(F,G),\text{rk}(\text{Im}(f))=t} \deg(\text{Im}(F)).
\end{align*}
Then $U$ is an open dense subset of $\text{Hom}(F,G)$. Moreover, if $\text{rk}(F) > \text{rk}(G)$, then each $f\in U$ is surjective.
\end{lemma}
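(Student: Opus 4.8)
The plan is to exploit the very special representation theory of (semi)stable bundles on an elliptic curve. The first step is structural: since $F$ and $G$ are polystable, Atiyah's classification lets me write $F \cong \bigoplus_{i=1}^a F_i$ and $G \cong \bigoplus_{j=1}^b G_j$ as direct sums of pairwise non-isomorphic stable bundles, all of slope $\mu(F)$ resp. $\mu(G)$ (on an elliptic curve every stable summand of a fixed slope has the same rank and degree). I would then record the two cohomological inputs that drive everything. By Serre duality on $X$ (where $\omega_X \cong \mathcal{O}_X$) one has $\text{Ext}^1(F,G) \cong \text{Hom}(G,F)^\vee$, and since $F,G$ are semistable with $\mu(G) > \mu(F)$ there is no nonzero map $G \to F$ (its image would be simultaneously a quotient of $G$ and a subsheaf of $F$, violating the slope inequality); hence $\text{Ext}^1(F,G) = 0$. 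Riemann--Roch in genus one then gives
\[
\dim \text{Hom}(F,G) = \text{rk}(F)\deg(G) - \text{rk}(G)\deg(F) > 0,
\]
so in particular $\text{Hom}(F,G)$ is a nonzero affine space, hence irreducible.

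For the openness and density claim I would argue by semicontinuity. Over the parameter space $\text{Hom}(F,G)$ the rank of the image $\text{rk}(\text{Im}(f))$ is lower semicontinuous (it drops on determinantal loci), so $\{f : \text{rk}(\text{Im}(f)) = t\}$ is open because $t$ is the realized maximum. On this open set $\text{Im}(f)$ is a rank-$t$ subsheaf of $G$ (torsion-free, hence a subbundle on the curve), and $\deg(\text{Im}(f))$ is likewise semicontinuous, since the torsion of $\text{coker}(f)$ can only grow under specialization; thus $\{f : \deg(\text{Im}(f)) = h\}$ is also open. Therefore $U$ is open, and it is nonempty because $t,h$ are maxima that are actually attained. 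Density is then immediate: a nonempty Zariski-open subset of the irreducible variety $\text{Hom}(F,G)$ is dense.

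The surjectivity assertion under $\text{rk}(F) > \text{rk}(G)$ is the heart of the matter, and here I would use a degeneracy-locus dimension count. Set $m = \text{rk}(F)$, $n = \text{rk}(G)$ and consider the incidence variety
\[
Z := \{(f,x) \in \text{Hom}(F,G) \times X : f_x : F_x \to G_x \text{ is not surjective}\}.
\]
For fixed $x$ the non-surjective linear maps form a cone $\Sigma_x \subset \text{Hom}(F_x,G_x)$ of codimension $m - n + 1$. If the evaluation map $\text{ev}_x : \text{Hom}(F,G) \to \text{Hom}(F_x,G_x)$ is surjective, then the fibre of $Z$ over $x$ has codimension $m - n + 1 \geq 2$ (using $m > n$), whence
\[
\dim Z \leq \dim \text{Hom}(F,G) - (m-n+1) + \dim X = \dim \text{Hom}(F,G) - (m-n) < \dim \text{Hom}(F,G).
\]
Consequently the projection $Z \to \text{Hom}(F,G)$ is not dominant, so a generic $f$ satisfies $\text{rk}(f_x) = n$ for every $x$, i.e. $f$ is a surjection of bundles; for such $f$ one has $\text{Im}(f) = G$, forcing $t = \text{rk}(G)$ and $h = \deg(G)$. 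Since any $f \in U$ then has a full-rank image in $G$ of degree $\deg(G)$, that image is all of $G$, and therefore every $f \in U$ is surjective.

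The hard part will be justifying that the evaluation maps $\text{ev}_x$ are surjective at enough points $x$ --- equivalently, that $F^\vee \otimes G$ is globally generated (or at least generically so) --- since on an elliptic curve a semistable bundle of small positive slope can fail to be globally generated. I expect to handle this using the structure theory specific to elliptic curves: applying a Fourier--Mukai autoequivalence (an element of $SL(2,\mathbb{Z})$ acting on the Chern character) to reduce to the case where $\mu(G) - \mu(F)$ is large and positive, where semistability forces $F^\vee \otimes G$ to be globally generated, while checking that the autoequivalence preserves polystability, short exactness, and the surjectivity conclusion. The multiplicity-free hypothesis --- that no two stable summands of $F$ (resp. $G$) coincide --- should enter precisely here, guaranteeing that the image of $\text{ev}_x$ is in sufficiently general position relative to $\Sigma_x$ so that the codimension-$\geq 2$ estimate is not spoiled by coincidences among the summands.
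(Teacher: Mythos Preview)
The paper does not prove this lemma at all; it is quoted verbatim as Lemma~1.4 and Proposition~2.3 of the cited reference and used as a black box in the proof of Theorem~\ref{thm:surgery_extension}. So there is no ``paper's proof'' to compare against, and your proposal is an independent attempt at the external result.

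Your argument for openness and density of $U$ is sound: $\mathrm{Hom}(F,G)$ is an irreducible affine space, the rank of the image is lower semicontinuous, and on the maximal-rank locus the degree of the image is again semicontinuous; a nonempty open subset of an irreducible variety is dense.

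The surjectivity argument, however, has a real gap---one you explicitly flag but do not close. Your degeneracy-locus count needs the evaluation map $\mathrm{ev}_x\colon \mathrm{Hom}(F,G)\to \mathrm{Hom}(F_x,G_x)$ to hit the degeneracy cone $\Sigma_x$ in the expected codimension, and this genuinely fails without the multiplicity-free hypothesis. Take $F=\mathcal{O}_X^{\oplus 2}$ and $G=L$ a degree-$1$ line bundle: then $\mathrm{rk}(F)>\mathrm{rk}(G)$ and $\mu(F)<\mu(G)$, yet $h^0(L)=1$ forces every map $\mathcal{O}_X^{\oplus 2}\to L$ to vanish at the base point of $|L|$, so \emph{no} $f$ is surjective. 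This example is excluded precisely by the assumption that no two indecomposable summands of $F$ are isomorphic, confirming that the hypothesis must enter the proof in an essential way. Your sketch only says it ``should enter precisely here'' without saying how.

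Your proposed Fourier--Mukai reduction is also problematic as stated: an autoequivalence $\Phi$ identifies $\mathrm{Hom}(F,G)$ with $\mathrm{Hom}(\Phi F,\Phi G)$ and preserves distinguished triangles, but a surjection $\Phi F\twoheadrightarrow \Phi G$ of sheaves need not correspond to a surjection $F\twoheadrightarrow G$; the triangle $K\to F\to G\to K[1]$ it produces may have $K$ a two-term complex. You would need to argue carefully that the kernel stays a sheaf, which is exactly where the polystability and multiplicity-free hypotheses on $F$ and $G$ must be invoked. As written, the surjectivity half remains a plausible outline rather than a proof.
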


\begin{lemma}[Corollary 1.3 in \cite{exact_sequence_stable_bundle})]\label{lem:flat_family_limit}
Fix a flat family $\{X(t):t\in T\}$ of smooth compact Riemann surfaces with $T$ integral. Let $H,Q$ be vector bundles on $X=X(0)$ such that $\text{Hom}(Q(0),H(0))=0$. Then any extension
$$0\to H(0)\to E\to Q(0)\to 0$$
is the limit of a flat family of extensions
$$0\to H(t)\to E(t)\to Q(t)\to 0$$
with $H(t)$ and $Q(t)$ semi-stable and $t$ in some open subset of $T$ containing $0$.
\end{lemma}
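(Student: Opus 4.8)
The plan is to prove both implications separately, with necessity following almost immediately from Proposition \ref{prop:same_det} and the substance of the argument lying in the sufficiency direction, where the stable-bundle technology of Lemmas \ref{lem:surjective} and \ref{lem:flat_family_limit} is needed.

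\textbf{Necessity.} Suppose $\check{\mathbb{L}}_{K,b}$ fits into a short exact sequence $0\to\check{\bb{L}}_{2,b_2}\to\check{\mathbb{L}}_{K,b}\to\check{\bb{L}}_{1,b_1}\to 0$. Taking determinants gives $\det(\check{\mathbb{L}}_{K,b})\cong\det(\check{\bb{L}}_{1,b_1})\otimes\det(\check{\bb{L}}_{2,b_2})$. Since $\bb{L}_K$ has connected domain it is a single-component multi-section of rank $r_1+r_2$, so Proposition \ref{prop:same_det} applies with $M=1$, $r_1'=r_1+r_2$ and $b_1'=b$. Its second integrality condition then reads $b_1+b_2-\tfrac12-b\in\bb{Z}$, which differs from the asserted $b_1+b_2+\tfrac12-b\in\bb{Z}$ by the integer $1$ and is therefore equivalent to it.

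\textbf{Sufficiency: reduction to Atiyah's classification.} Assume $b_1+b_2+\tfrac12-b\in\bb{Z}$. First I would check that the \emph{first} integrality condition of Proposition \ref{prop:same_det} holds automatically for the surgery: a direct computation of $a_K=\int_0^{r_1+r_2}\varphi_K$ shows that, because $\bb{L}_K$ is assembled from all of $\bb{L}_1$ and all of $\bb{L}_2$ and the surgery is supported near the chosen points, $a_K$ differs from $\tfrac{r_1d_1}{2}+c_1+\tfrac{r_2d_2}{2}+c_2$ by an integer, independently of the surgery parameters (consistent with the earlier remark that these parameters are irrelevant). With both integrality conditions in force, Proposition \ref{prop:same_det} yields $\det(\check{\mathbb{L}}_{K,b})\cong\det(\check{\bb{L}}_{1,b_1})\otimes\det(\check{\bb{L}}_{2,b_2})$. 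Next, the gcd hypotheses together with Atiyah's classification \cite{Atiyah_vector_bundle_over_an_elliptic_curve} show that $\check{\bb{L}}_{1,b_1}$, $\check{\bb{L}}_{2,b_2}$ and $\check{\mathbb{L}}_{K,b}$ are all \emph{stable}, with slopes $\mu(\check{\bb{L}}_{2,b_2})=-d_2/r_2$, $\mu(\check{\mathbb{L}}_{K,b})=-(d_1+d_2)/(r_1+r_2)$ and $\mu(\check{\bb{L}}_{1,b_1})=-d_1/r_1$; the hypothesis $r_1d_2>r_2d_1$ is exactly the mediant inequality ordering them as $\mu(\check{\bb{L}}_{2,b_2})<\mu(\check{\mathbb{L}}_{K,b})<\mu(\check{\bb{L}}_{1,b_1})$.

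\textbf{Constructing the extension.} I would now realize $\check{\mathbb{L}}_{K,b}$ directly as the middle term. Apply Lemma \ref{lem:surjective} with $F=\check{\mathbb{L}}_{K,b}$ and $G=\check{\bb{L}}_{1,b_1}$: both are stable, hence polystable with a single indecomposable factor (so the no-repeated-factor hypothesis is vacuous), $\text{rk}(F)=r_1+r_2>r_1=\text{rk}(G)$, and $\mu(F)<\mu(G)$, so a generic $f\in\text{Hom}(F,G)$ is surjective. Its kernel $H$ is a vector bundle of rank $r_2$ and degree $-d_2$ fitting in $0\to H\to\check{\mathbb{L}}_{K,b}\xrightarrow{f}\check{\bb{L}}_{1,b_1}\to 0$, and the determinant identity above forces $\det(H)\cong\det(\check{\bb{L}}_{2,b_2})$. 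Since $\gcd(r_2,d_2)=1$, it remains only to show $H$ is semistable: then $H$ is stable, and Atiyah's classification (stable bundles of fixed rank and coprime degree are determined by their determinant) gives $H\cong\check{\bb{L}}_{2,b_2}$, which completes the desired sequence $0\to\check{\bb{L}}_{2,b_2}\to\check{\mathbb{L}}_{K,b}\to\check{\bb{L}}_{1,b_1}\to 0$. To prove semistability of the kernel of a \emph{generic} surjection I would argue that each candidate destabilizing subbundle $W\subset\check{\mathbb{L}}_{K,b}$ (of which there are boundedly many numerical types, all with $-d_2/r_2<\mu(W)<\mu(\check{\mathbb{L}}_{K,b})$ by stability of the ambient bundle) forces $f$ into the proper linear subspace $\text{Hom}(\check{\mathbb{L}}_{K,b}/W,\check{\bb{L}}_{1,b_1})\subset\text{Hom}(F,G)$; a dimension count over the relevant Quot schemes, combined with the flat-family degeneration of Lemma \ref{lem:flat_family_limit} to transport semistability from a nearby curve, shows these loci cannot exhaust the surjective stratum, so a generic surjection has semistable kernel.

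\textbf{Main obstacle.} The genuine difficulty is this last step. Stability of $\check{\mathbb{L}}_{K,b}$ bounds every subbundle slope only by $\mu(\check{\mathbb{L}}_{K,b})$, which is \emph{weaker} than the bound $\mu(H)=-d_2/r_2$ required for $H$ to be semistable; thus semistability of the kernel does not follow formally from stability of the total space, and the genericity of the chosen surjection (equivalently, of the extension class) is essential. This is precisely the point where Lemmas \ref{lem:surjective} and \ref{lem:flat_family_limit} do the work, and I expect the careful dimension estimate controlling the destabilizing loci to be the technical heart of the proof.
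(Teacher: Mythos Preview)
Your proposal does not address the stated lemma at all. Lemma \ref{lem:flat_family_limit} is a result quoted from \cite{exact_sequence_stable_bundle} about deforming extensions in flat families; the paper gives no proof of it, and neither do you. What you have written is a proof sketch for Theorem \ref{thm:surgery_extension} (the surgery--extension correspondence), in which Lemma \ref{lem:flat_family_limit} figures only as an ingredient --- you even invoke ``Lemmas \ref{lem:surjective} and \ref{lem:flat_family_limit}'' as tools inside your own argument.

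If your target was really Theorem \ref{thm:surgery_extension}, then your approach and the paper's diverge precisely at the point you flag as the main obstacle. You try to show that the kernel $H=\ker(f)$ of a \emph{generic} surjection $\check{\bb{L}}_{K,b}\to\check{\bb{L}}_{1,b_1}$ is semistable, via a dimension count over destabilizing loci in a Quot scheme; you acknowledge this is only a sketch. The paper sidesteps this entirely. It takes the kernel $\bb{K}$ with no semistability claimed, chooses a flat family $\bb{K}(t)$ with $\bb{K}(0)=\bb{K}$ and $\bb{K}(t)$ semistable for $t\neq 0$ (every bundle on an elliptic curve is a flat limit of semistable bundles with the same determinant), checks $\text{Hom}(\check{\bb{L}}_{1,b_1},\bb{K})=0$ from stability of $\check{\bb{L}}_{K,b}$, and then applies Lemma \ref{lem:flat_family_limit} to produce a family of extensions $0\to\bb{K}(t)\to\check{\bb{L}}_{K,b}(t)\to\check{\bb{L}}_{1,b_1}(t)\to 0$. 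Openness of semistability plus Atiyah's classification (a semistable bundle of coprime rank and degree is determined by its determinant) forces $\check{\bb{L}}_{K,b}(t)\cong\check{\bb{L}}_{K,b}$ and $\bb{K}(t)\cong\check{\bb{L}}_{2,b_2}$ for small $t\neq 0$, yielding the desired sequence. So the paper never proves semistability of the original kernel; it \emph{deforms away} from it and lets rigidity do the identification. Your genericity-of-kernel route may well be completable, but it is a genuinely harder argument than the one the paper actually runs.
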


\begin{proof}[Proof of Theorem \ref{thm:surgery_extension}]
Let $\varphi:\mathbb{R}\to\mathbb{R}$ be the defining equation of $\mathbb{L}_K$ such that $\varphi(0)=a_1/r_1$. The first integrality condition:
$$\int_0^{r_1+r_2}\varphi(x)dx=\frac{r_1d_1}{2}+\frac{r_2d_2}{2}+c_1+c_2+N,\text{ for some }N\in\mathbb{Z}$$
will be proved in Lemma \ref{lem:area}.
The second integrality condition follows by taking $M=1$.

Conversely, we need to show that $\check{\bb{L}}_{K,b}$ fits into the exact sequence assuming the first and second integrality conditions. First of all, these conditions imply that
$$\det(\check{\bb{L}}_{K,b})\cong\det(\check{\bb{L}}_{1,b_1})\otimes\det(\check{\bb{L}}_{2,b_2}).$$
Since $r_1d_2>r_2d_1$, we have
$$\mu(\check{\bb{L}}_{K,b})=-\frac{d_1+d_2}{r_1+r_2}<-d_1\left(\frac{1+r_2/r_1}{r_1+r_2}\right)=-\frac{d_1}{r_1}=\mu(\check{\bb{L}}_{1,b_1}).$$
Also, the domains of $\bb{L}_1,\bb{L}_2$ and $\mathbb{L}_K$ are connected and $\text{gcd}(r_1,d_1)=\text{gcd}(r_2,d_2)=\text{gcd}(r_1+r_2,d_1+d_2)=1$, so $\check{\bb{L}}_1,\check{\bb{L}}_2,\check{\bb{L}}_K$ as well as $\check{\bb{L}}_{1,b_1},\check{\bb{L}}_{2,b_2},\check{\bb{L}}_{K,b}$ are all stable bundles. Hence we can apply Lemma \ref{lem:surjective} to find a surjective map $f:\check{\bb{L}}_{K,b}\to\check{\bb{L}}_{1,b_1}$.
Letting $\mathbb{K}:=\ker(f)$, we obtain the exact sequence
$$0\to\bb{K}\to\check{\bb{L}}_{K,b}\to\check{\bb{L}}_{1,b_1}\to 0.$$

Since every vector bundle over an elliptic curve is the flat limit of a family of semi-stable bundles with the same determinant (see \cite[Remark 1.1]{exact_sequence_stable_bundle}), we can choose families $\bb{K}(t)$ and $\check{\bb{L}}_{1,b_1}(t)$ such that $\bb{K}(0)=\bb{K}$ and $\check{\bb{L}}_{1,b_1}(0)=\check{\bb{L}}_{1,b_1}$.
By the classification result of Atiyah \cite{Atiyah_vector_bundle_over_an_elliptic_curve}, any indecomposable vector bundle on an elliptic curve with $\text{gcd}(\text{rk},\deg) = 1$ is determined by its determinant line bundle. Hence we have $\check{\bb{L}}_{1,b_1}(t)\cong\check{\bb{L}}_{1,b_1}$ for all $t$ near $0$ by the openness of semistability.

Since $\check{\bb{L}}_{K,b}$ is stable, we have $\text{Hom}(\check{\bb{L}}_{1,b_1},\bb{K})=0$ (see \cite[Lemma 1.1]{On_a_conjecture_of_Lange}).
Then we can apply Lemma \ref{lem:flat_family_limit} to obtain an exact sequence
\begin{equation}\label{equ:exact_sequence}
0\to\bb{K}(t)\to\check{\bb{L}}_{K,b}(t)\to\check{\bb{L}}_{1,b_1}(t)\to 0
\end{equation}
with $\check{\bb{L}}_{K,b}(0)=\check{\bb{L}}_{K,b}$.
Also, $\check{\bb{L}}_{K,b}(t)$ is semi-stable for all $t$ near $0$ again by openness of semistability. But
$$\det(\check{\bb{L}}_{K,b}(t))\cong\det(\bb{K}(t))\otimes\det(\check{\bb{L}}_{1,b_1}(t))=\det(\bb{K})\otimes\det(\check{\bb{L}}_{1,b_1})\cong\det(\check{\bb{L}}_{K,b}),$$
so we must have $\check{\bb{L}}_{K,b}(t)\cong\check{\bb{L}}_{K,b}$ for all $t$ near $0$.

Finally, note that
$$\det(\bb{K}(t))=\det(\bb{K})\cong\det(\check{\bb{L}}_{K,b})\otimes\det(\check{\bb{L}}_{1,b_1})^{-1}\cong\det(\check{\bb{L}}_{2,b_2})$$
and $\bb{K}(t)$ is semi-stable, so we have $\bb{K}(t)\cong\check{\bb{L}}_{2,b_2}$. Therefore, for small $t\neq 0$, the exact sequence (\ref{equ:exact_sequence}) reads
$$0\to\check{\bb{L}}_{2,b_2}\to\check{\bb{L}}_{K,b}\to\check{\bb{L}}_{1,b_1}\to 0.$$
This completes the proof of the theorem.
\end{proof}

Next we prove the integral formula that we need in the proof of Theorem \ref{thm:surgery_extension}.

\begin{lemma}\label{lem:area}
Let $\mathbb{L}_K$ be as in Theorem \ref{thm:surgery_extension} Then
$$\int_0^{r_1+r_2}\varphi(x)dx = \frac{r_1d_1}{2} + \frac{r_2d_2}{2} + c_1 + c_2 + N, \text{ for some $N\in\mathbb{Z}$},$$
where $\varphi:\mathbb{R}\to\mathbb{R}$ is the defining equation of $\mathbb{L}_K$.
\end{lemma}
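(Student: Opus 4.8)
The plan is to reduce the statement to showing that the single function
\[
h(x):=\sum_{k=0}^{r_1+r_2-1}\varphi(x+k)\;-\;\sum_{k=0}^{r_1-1}\varphi_1(x+k)\;-\;\sum_{k=0}^{r_2-1}\varphi_2(x+k)
\]
is a constant integer, where $\varphi_1(x)=\tfrac{d_1}{r_1}x+\tfrac{c_1}{r_1}$ and $\varphi_2(x)=\tfrac{d_2}{r_2}x+\tfrac{c_2}{r_2}$ are the defining equations of $\bb{L}_1,\bb{L}_2$. The point is that $\sum_{k=0}^{r-1}\psi(x+k)$ is precisely the sum of the fiber coordinates of the $r$ local branches over the base point $[x]$ of a rank $r$ multi-section with defining equation $\psi$, and $\int_0^1\sum_{k=0}^{r-1}\psi(x+k)\,dx=\int_0^r\psi(x)\,dx$. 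So integrating $h$ over $[0,1]$ and invoking $\int_0^{r_i}\varphi_i(x)\,dx=\tfrac{r_id_i}{2}+c_i$ (the Example) yields
\[
\int_0^{r_1+r_2}\varphi(x)\,dx=\frac{r_1d_1}{2}+\frac{r_2d_2}{2}+c_1+c_2+\int_0^1 h(x)\,dx,
\]
and $\int_0^1 h=h(0)\in\bb{Z}$ once $h$ is known to be a constant integer.

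To prove $h'\equiv 0$, I would argue that the branch slopes of $\bb{L}_K$ sum to $d_1+d_2$ over \emph{every} base point. Away from the finitely many base points lying below a surgery point this is clear, since there $\bb{L}_K$ coincides with $\bb{L}_1\cup\bb{L}_2$ and each branch of $\bb{L}_i$ has slope $d_i/r_i$, giving $r_1\cdot\tfrac{d_1}{r_1}+r_2\cdot\tfrac{d_2}{r_2}=d_1+d_2$. Near a surgery point $p$ (base coordinate $x_p$), in fibered coordinates the two branches $y=u_1(x)$ of $\bb{L}_1$ and $y=u_2(x)$ of $\bb{L}_2$ through $p$ are replaced by two disjoint graphs $y=\psi_\pm(x)$ with $\psi_+>\psi_-$, each agreeing with the corresponding branch of $\bb{L}_1\cup\bb{L}_2$ outside the small surgery neighborhood; as the Remark preceding Theorem~\ref{thm:surgery_extension} records, the profile of the surgery is irrelevant to the Floer-theoretic data, so we may perform it so that $\psi_++\psi_-=u_1+u_2$ identically on that neighborhood. (Such a profile exists: choose $\psi_+$ smooth, $>\tfrac12(u_1+u_2)$, equal to whichever of $u_1,u_2$ is larger near each end of the neighborhood, and set $\psi_-:=u_1+u_2-\psi_+$.) Then $\psi_+'+\psi_-'=u_1'+u_2'$, so the branch slopes of $\bb{L}_K$ sum to $d_1+d_2$ over every base point, i.e. $\tfrac{d}{dx}\sum_{k=0}^{r_1+r_2-1}\varphi(x+k)\equiv d_1+d_2$. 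Since likewise $\tfrac{d}{dx}\sum_{k=0}^{r_i-1}\varphi_i(x+k)\equiv d_i$, we conclude $h'\equiv 0$.

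Finally I would pin down the constant by evaluating $h$ at any base point $[x_0]$ not lying below a surgery point: there the fiber meets $\bb{L}_K$ in exactly the same points as it meets $\bb{L}_1\cup\bb{L}_2$, so the multiset $\{\varphi(x_0+k)\bmod\bb{Z}\}$ equals $\{\varphi_1(x_0+k)\bmod\bb{Z}\}\sqcup\{\varphi_2(x_0+k)\bmod\bb{Z}\}$ in $\bb{R}/\bb{Z}$; summing gives $h(x_0)\in\bb{Z}$, hence $N:=h\in\bb{Z}$. The step I expect to need the most care is the claim that the surgery may be taken \emph{area-neutral} (i.e. $\psi_++\psi_-=u_1+u_2$): this is essentially forced by the cited Remark, since the area defect of a surgery handle tends to $0$ with the surgery parameter while $\int_0^{r_1+r_2}\varphi\,dx\bmod\bb{Z}$ must be parameter-independent; alternatively, one checks directly that the standard local model of the graded surgery used here already has this property.
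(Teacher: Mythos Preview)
Your argument is correct and complete, modulo the same ``surgery is symmetric/area-neutral'' assumption that the paper itself invokes without proof (``Since Lagrangian surgery is symmetric, the integral is the same as the area bounded by the piecewise linear function\ldots''). Your construction of an explicit area-neutral profile $\psi_\pm$ with $\psi_++\psi_-=u_1+u_2$ is a clean way to justify this step.

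Your route is genuinely different from the paper's. The paper proceeds geometrically: after passing to the piecewise-linear (symmetric-surgery) model, it decomposes the region under the graph of $\varphi$ on $[0,r_1+r_2]$ into several coloured pieces whose areas are computed directly, leaving a residual ``white'' region $W$; it then identifies $W$ as $\int_\Delta\omega$ for a $2$-cycle $\Delta$ in $T^2$, hence an integer because $[\omega]$ is integral. Your approach is algebraic/fiberwise: you package the branch values into $h(x)$, observe that area-neutrality of the surgery forces the branch-slope sum to be identically $d_1+d_2$ so $h'\equiv 0$, and then read off $h\in\bb{Z}$ at a single fiber away from the surgery locus. Your method is more elementary---it avoids the coloured-region bookkeeping and the topological $2$-cycle argument entirely---and it makes transparent why the specific surgery profile is irrelevant. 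The paper's method, on the other hand, gives a conceptual interpretation of the integer $N$ (up to the explicit summand $d_1r_2$) as a symplectic area of a $2$-cycle, which your computation does not supply.
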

\begin{proof}
First of all, by adding a sufficiently large integer, we may assume that $\varphi\geq 0$ on the interval $[0,r_1+r_2]$.

Observe that
$$\int_0^{r_1+r_2}\varphi(x)dx$$
is nothing but the area bounded by $\varphi$ and the $x$-axis, from $0$ to $r_1+r_2$. Since Lagrangian surgery is symmetric, the integral is the same as the area bounded by the piecewise linear function, obtained by replacing the non-linear portions by piecewise linear functions, with the $x$-axis, from $0$ to $r_1+r_2$.

We cut the area into several pieces as in Figure \ref{fig:area}.

\begin{figure}[H]
\centering
    \includegraphics[width=100mm]{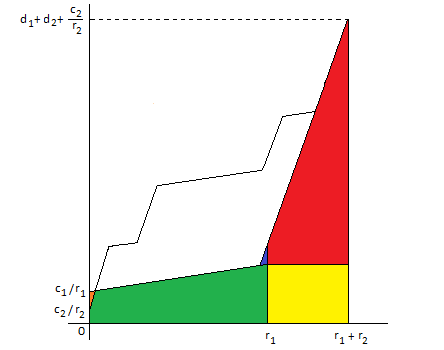}
    \caption{}
    \label{fig:area}
\end{figure}

Denote the area of the red, blue, yellow, green, orange and white region by $R,B,Y,G,O$ and $W$ respectively. Then we have
$$\int_0^{r_1+r_2}\varphi(x)dx=R+B+Y+G+W.$$
Note that we have the following
\begin{align*}
R & = \int_0^{r_2}\left(\frac{d_2}{r_2}x + \frac{c_2}{r_2}-\frac{c_1}{r_1}\right)dx = \frac{r_2d_2}{2}+c_2-\frac{c_1r_2}{r_1},\\
Y & = (d_1+\frac{c_1}{r_1})r_2 = d_1r_2 + \frac{c_1r_2}{r_1},\\
O + G & = \int_0^{r_1}\left(\frac{d_1}{r_1}x + \frac{c_1}{r_1}\right)dx = \frac{r_1d_1}{2} + c_1,\\
B & = O.
\end{align*}
Hence
$$\int_0^{r_1+r_2}\varphi(x)dx = \frac{r_1d_1}{2} + c_1 + \frac{r_2d_2}{2} + c_2 + d_1r_2 + W.$$
To see that $W$ is an integer, take a look at Figure \ref{fig:blue_red}.

\begin{figure}[H]
\centering
    \includegraphics[width=100mm]{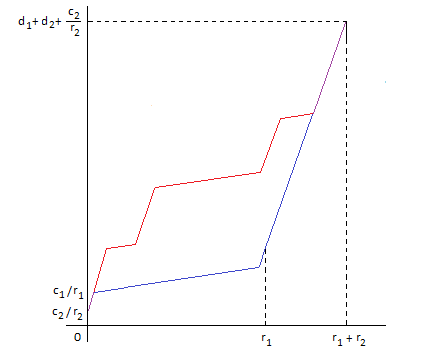}
    \caption{}
    \label{fig:blue_red}
\end{figure}

The red and blue lines can be viewed as two continuous maps $f_j:[0,1]\to X$, $j=1,2$ with the same image $\bb{L}_1\cup \bb{L}_2$. They define the same homology class in $H_1(X;\mathbb{Z})$, namely, $(r_1+r_2,d_1+d_2)\in\mathbb{Z}^2\cong H_1(X;\mathbb{Z})$. The white region serves as a 2-chain $\Delta$ such that
$$\partial\Delta = f_1 - f_2.$$
But $f_1,f_2$ also define the same image, so $\Delta$ is indeed a 2-cycle, i.e., $[\Delta]\in H_2(X;\mathbb{Z})$. Pulling back the symplectic form to $\mathbb{R}^2$, we then have $\omega=d(ydx)$. Let $\widetilde{f}_1,\widetilde{f}_2:[0,1]\to\mathbb{R}^2$ denote the lifts of $f_1,f_2$ starting at $c_1/r_2,c_2/r_2$ respectively. Then
$$W = \int_{\widetilde{f}_1 - \widetilde{f}_2} ydx = \int_{\Delta}\omega \in \mathbb{Z},$$
as $[\omega]$ is an integral class. This completes the proof of the lemma.
\end{proof}

Let us remark that the surgery-extension correspondence is {\em not} true for self-extension. For example, let $\bb{L}_0$ be the zero section and $\bb{L}'_0$ be the Lagrangian
$$\{(e^{2\pi ix},e^{2\pi i\sin(2\pi ix)}):x\in\bb{R}\},$$
which is Hamiltonian equivalent to $\bb{L}_0$. Hence both $\bb{L}_0$ and $\bb{L}_0'$ have $\cu{O}_{\check{X}}$, the trivial line bundle, as the SYZ mirror bundle. We perform a Lagrangian surgery at the index 1 intersection point $p$ to obtain an immersed Lagrangian multi-section $\bb{L}_p$ of rank 2. See Figure \ref{fig:self_extension}.

\begin{figure}[H]
\centering
    \includegraphics[width=100mm]{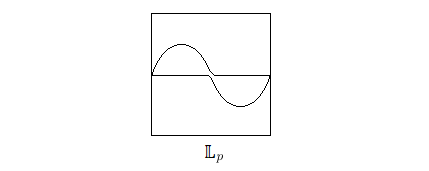}
    \caption{}
    \label{fig:self_extension}
\end{figure}

For any choice of local system $d+2\pi i\frac{b}{2}dx$ on the domain of $\bb{L}_p$, the transition function of $\check{\bb{L}}_{p,b}$ is given by
$$\begin{pmatrix}
0 & e^{2\pi ib}
\\ 1 & 0
\end{pmatrix}$$
Since it is a constant matrix, it is gauge equivalent to its diagonalization:
$$\begin{pmatrix}
e^{i\pi b} & 0
\\0 & -e^{i\pi b}
\end{pmatrix}.$$

If $\check{\bb{L}}_{p,b}$ is a self-extension of $\cu{O}_{\check{X}}$, then we have $e^{2i\pi b}=-1$, which is equivalent to the condition that $b\in\frac{1}{2}+\bb{Z}$. In this case, it is easy to see that $\check{\bb{L}}_{p,b}$ is isomorphic to a non-trivial decomposable holomorphic vector bundle of rank 2. However, it is known that the only self-extensions of $\cu{O}_{\check{X}}$ are $\cu{O}_{\check{X}}^{\oplus 2}$ and the Atiyah bundle $\check{A}_2$, which is indecomposable. Therefore $\check{\bb{L}}_{p,b}$ cannot be a self-extension of $\cu{O}_{\check{X}}$ for any choice of $b$.

Accordingly we expect that $\bb{L}_{p,b}$ is not a mapping cone of $p:\bb{L}_0\to \bb{L}_0'$. Note that this does not violate the result of Abouzaid \cite{Abouzaid_Fukaya_categories_of_higher_genus} because he required two curves to be intersecting minimally within their isotopy class. In our example, $\bb{L}_0'$ does not intersect $\bb{L}_0$ minimally within its isotopy class (the minimal intersection is in fact empty).

\begin{remark}
As we have mentioned in the introduction, the integrality condition
$$b_1+b_2-b-\frac{1}{2}\in\bb{Z}$$
suggests that given two Lagrangian A-branes $(\bb{L}_1,\cu{L}_1),(\bb{L}_2,\cu{L}_2)$ in a symplectic manifold $(M,\omega)$, if $\cu{L}_1$, $\cu{L}_2$ have holonomies $e^{2\pi ib_1}$, $e^{2\pi ib_2}$ respectively, then the holonomy $e^{2\pi ib}$ of $\cu{L}$ should be chosen to satisfy
$$e^{2\pi ib}e^{-2\pi ib_1}e^{-2\pi ib_2}=-1.$$
We believe that this relation can be understood in terms of Floer theory.
\end{remark}

\section{Invariance of immersed Floer cohomology}\label{sec:inv_thm}

As we have seen in the introduction, the surgery-extension correspondence theorem (Theorem \ref{thm:surgery_extension}) gives us a pair of non-Hamiltonian equivalent Lagrangian immersions that share the same SYZ mirror bundle. By homological mirror symmetry, we expect that the two Lagrangian immersions should be equivalent in the immersed Fukaya category. Indeed it was pointed out by Akaho and Joyce in their work \cite{AJ} on immersed Floer theory that the immersed Floer cohomology should have an invariance property under some equivalence which is weaker than global Hamiltonian equivalence. In this section, we will define a new notion called lifted Hamiltonian equivalence and prove that the immersed Floer cohomology is invariant under this new equivalence. So let us digress from mirror symmetry for a moment and turn our attention to symplectic geometry.

Throughout this section, the notation $(M,\omega)$ will stand for a $2n$-dimensional compact symplectic manifold equipped with a symplectic form $\omega$. The notion of Lagrangian immersions is defined as in Definition \ref{def:Lag_imm}. We always assume that the domain of a Lagrangian immersion is compact. We first recall the definition of immersed Floer cohomology for a pair of Lagrangian immersions introduced in \cite{AJ} by Akaho and Joyce.

\subsection{Maslov index and the immersed Floer cohomology}

To have good Floer theory, Akaho and Joyce made the following assumption:
\begin{assumption}\label{assumption:A}
The intersection points of $\xi_1(L_1)$ and $\xi_2(L_2)$ are finite and do not coincide with their self-intersection points.
\end{assumption}
The Floer complex of two transversally-intersecting immersed Lagrangians $\mathbb{L}_1=(L_1,\xi_1)$ and $\mathbb{L}_2=(L_2,\xi_2)$ is defined by
$$CF(\mathbb{L}_1,\mathbb{L}_2):=\bigoplus_{p\in\xi_1(L_1)\cap\xi_2(L_2)}\Lambda_{nov}\cdot p,$$
where $\Lambda_{nov}$ is the Novikov field given by
$$\Lambda_{nov}:=\left\{ \sum_{i=1}^{\infty}a_iT^{\lambda_i}: a_i\in k,\lim_{i\to\infty}\lambda_i = \infty \right\}$$
and $k$ is a field ($\bb{R}$, $\bb{C}$ or $\bb{Z}_2$).

Let $\{J_t\}_{t\in[0,1]}$ be a family of almost complex structure on $M$ which are compatible with $\omega$. Let $g_t(\cdot,\cdot):=\omega(J_t\cdot,\cdot)$ be the Riemannian metric associated to the pair $(\omega,J_t)$. The Floer differential $m_1$ is defined by counting $J_t$-holomorphic strips $u:\bb{R}\times[0,1]\to M$:
$$\pd{u}{s}+J_t(u)\pd{u}{t}=0$$
with finite energy
$$\int_{\bb{R}\times[0,1]}|du(s,t)|_{g_t}dsdt<+\infty$$
and boundary data
$$u(\bb{R}\times\{0\})\subset\xi_1(L_1),\quad u(\bb{R}\times\{1\})\subset\xi_2(L_2),$$
$$\lim_{s\to -\infty}u(s,t)=q,\quad \lim_{s\to +\infty}u(s,t)=p.$$
Since $\bb{L}_1,\bb{L}_2$ are immersed, they require that there are continuous liftings $u_1^-:\bb{R}\times\{0\}\to L_1$, $u_2^+:\bb{R}\times\{1\}\to L_2$ such that $\xi_1\circ u_1^-=u|_{\bb{R}\times\{0\}}$ and $\xi_2\circ u_2^+=u|_{\bb{R}\times\{1\}}$ (see Figure \ref{fig:count}).

\begin{figure}[H]
\centering
    \includegraphics[width=100mm]{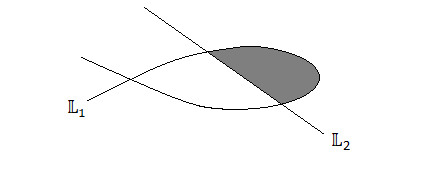}
    \caption{A $J_t$-holomorphic strip bounded by $\bb{L}_1$ and $\bb{L}_2$.}
    \label{fig:count}
\end{figure}

Let $\pi_2(M;\xi_1(L_1),\xi_2(L_2);p,q)$ be the space of all homotopy classes of strips $u:\bb{R}\times[0,1]\to M$ that satisfy the above boundary conditions and lifting properties. Fix a homotopy class $\beta\in\pi_2(M;\xi_1(L_1),\xi_2(L_2);p,q)$. Let $\til{\cu{M}}(p,q;\beta)$ be the moduli space of all $J_t$-holomorphic disks that represent the class $\beta$ and satisfy the above boundary data. Quotienting by the action of translation in the $s$-direction, we obtain the moduli space
$$\cu{M}(p,q;\beta):=\til{\cu{M}}(p,q;\beta)/\bb{R}.$$

In \cite{FOOO1, FOOO2}, the authors proved that $\cu{M}(p,q;\beta)$ is a Kuranishi space and can be compactified. Moreover, if the Lagrangian immersions are relatively spin, then the compactified moduli space can be oriented.
To describe the dimension of the moduli space, one needs to introduce the Maslov index; an excellent introduction of Maslov index can be found in Auroux's article \cite{Auroux_Beginner_intro_to_Fuk}.
Choose a symplectic trivialization $\Phi:u^*TM\cong(\bb{R}\times[0,1])\times T_pM$. Consider the Lagrangian paths
\begin{align*}
& \gamma_1^-:s\mapsto \Phi(d\xi_1(T_{u_1^-(-s,0)}L_1))\subset T_pM,\\
& \gamma_2^+:s\mapsto \Phi(d\xi_2(T_{u_2^+(s,1)}L_2))\subset T_pM
\end{align*}
in the Lagrangian Grassmanian $LGr(T_pM,\omega_p)$. Since $p$ is not a self-intersection point for either $\bb{L}_1,\bb{L}_2$, it has unique preimage points $l_1\in L_1$ and $l_2\in L_2$. We identify $(T_pM,\omega_p)$ with $(\bb{C}^n,\omega_{std})$. There exists $A\in Sp(2n,\bb{R})$ such that
$$A(d\xi_1(T_{l_1}L_1))=\bb{R}^n,\quad A(d\xi_2(T_{l_2}L_2))=i\bb{R}^n.$$
The canonical short path $\lambda_p:[0,1]\to LGr(T_pM,\omega_p)$ is defined to be
$$\lambda_p(t):=A^{-1}(e^{-\frac{\pi i}{2}t}(i\bb{R}^n)).$$
Then by concatenating the paths, $\gamma:=\lambda_p^{-1}*\gamma_1^-*\Phi(\lambda_q)*\gamma_2^+$ defines a loop in $LGr(T_pM,\omega_p)$, based at $d\xi_1(T_{l_1}L_1)$. Recall that $\pi_1(LGr(T_pM,\omega_p))\cong\bb{Z}$.
\begin{definition}
The {\em Maslov index} $\mu(u)$ of the strip $u:\bb{R}\times[0,1]\to M$ is defined as the degree of the loop $\gamma\subset LGr(T_pM,\omega_p)$.
\end{definition}
It is a well-known fact that the Maslov index only depends on the homotopy class of the strip $u:\bb{R}\times[0,1]\to M$. The (virtual) dimension of $\cu{M}(p,q;\beta)$ is given by $\mu(\beta)-1$.

The Floer differential $m_1:CF(\mathbb{L}_1,\mathbb{L}_2)\to CF(\mathbb{L}_1,\mathbb{L}_2)$ is defined by
$$m_1(p):=\sum_{q\in\xi_1(L_1)\cap\xi_2(L_2)}\sum_{\beta:\mu(\beta)=1}\sum_{u\in\cu{M}(p,q;\beta)}(-1)^{sign(u)}T^{\omega(u)}\cdot q,$$
where the sign $(-1)^{\text{sign}(u)}$ is determined by the orientation of the moduli space $\cu{M}(p,q;\beta)$ and
$$\omega(u):=\int_{\bb{R}\times[0,1]}u^*\omega$$
is the symplectic area of $u$. By Gromov compactness, the sum converges in $\Lambda_{nov}$.

\begin{definition}
If $m_1^2=0$, then the {\em immersed Floer cohomology} is defined as
$$HF(\mathbb{L}_1,\mathbb{L}_2):=H(CF(\bb{L}_1,\bb{L}_2),m_1).$$
\end{definition}

We assume all Lagrangian immersions we consider here are {\em unobstructed}, meaning that the Floer differential $m_1$ satisfies $(m_1)^2=0$.

\begin{remark}
Usually, the notion of unobstructed Lagrangian immersion involves a bounding cochain $b$ on the domain of the immersion. In this paper, we will consider those Lagrangian immersions with $b=0$.
\end{remark}

It is well known that the two limits
$$\lim_{s\to -\infty}u(s,t)=q,\quad \lim_{s\to +\infty}u(s,t)=p$$
converge uniformly in $t\in [0,1]$. Indeed, one has
$$\text{dist}(u(s,t),p)<Ce^{-\mu|s|}\ \text{ for all }t\in[0,1],$$
where $C,\mu>0$ are constants depending only on the energy $E(u)$ of $u$. By identifying $\bb{R}\times[0,1]$ with the closed unit disk $\Delta$ with punctures at $\pm 1$, the limits $\lim_{z\to -1}u(z),\lim_{z\to +1}u(z)$ exist and are equal to $q,p$ respectively. Therefore, it makes sense to write $u(-1)=q$ and $u(1)=p$.

From now on, we replace the strip model by the disk model with finite energy and boundary data
$$u(\partial^-\Delta)\subset\xi_1(L_1),\quad u(\partial^+\Delta)\subset\xi_2(L_2),$$
$$u(-1)=q,\quad u(1)=p.$$
Here, we put $\partial^-\Delta=S^1\cap\{z\in\bb{C}:\text{Im}(z)\leq 0\}$ and $\partial^+\Delta=S^1\cap\{z\in\bb{C}:\text{Im}(z)\geq 0\}$.

\subsection{Three types of equivalences}

First, we recall the notion of global Hamiltonian equivalence introduced in \cite{AJ}.

\begin{definition}\label{def:global_Ham}
Let $(L_1,\xi_2)$, $(L_2,\xi_2)$ be two Lagrangian immersions in a symplectic manifold $(M,\omega)$. They are said to be {\em globally Hamiltonian equivalent} if there exists a diffeomorphism $\phi:L_1\to L_2$ and a 1-parameter family of Hamiltonian diffeomorphism $\psi_t:M\to M$ such that $\psi_0=id_M$ and $\psi_1\circ\xi_1=\xi_2\circ\phi$.
\end{definition}


Akaho and Joyce proved that $HF(\mathbb{L}_1,\mathbb{L}_2)$ is indeed a global Hamiltonian invariant, that is, if $\bb{L}_2$ is globally Hamiltonian isotopic to $\bb{L}_2'$, then there is a quasi-isomorphism
$$(CF(\bb{L}_1,\bb{L}_2),m_1)\simeq(CF(\bb{L}_1,\bb{L}_2'),m_1').$$
In the immersed situation, there is another equivalence called local Hamiltonian equivalence. Let us recall its definition.

\begin{definition}\label{def:loc_Ham}
Let $(L_1,\xi_1)$, $(L_2,\xi_2)$ be two Lagrangian immersions in a symplectic manifold $(M,\omega)$. They are said to be {\em locally Hamiltonian equivalent} if there exists a diffeomorphism $\phi:L_1\to L_2$ and a smooth 1-parameter family $\Xi:[0,1]\times L_1\to M$ such that $\Xi(0,-)=\xi_1$, $\Xi(1,-)=\xi_2\circ\phi$ and the 1-form
$$\Xi^*(\omega)(\frac{d}{dt},-)$$
on $\{t\}\times L_1$, is exact for all $t\in [0,1]$.
\end{definition}

Note that local Hamiltonian equivalence can be implied by global Hamiltonian equivalence by pulling back the Hamiltonian function on $M$ to the domain $L$ via the immersion $\xi$ but not the converse in general, as shown by the example below.

\begin{example}
Consider the Lagrangian immersions $\bb{L},\bb{L}'$ in the standard symplectic 2-torus $T^2$ as shown in Figure \ref{fig:loc_ham}.

\begin{figure}[H]
\centering
    \includegraphics[width=100mm]{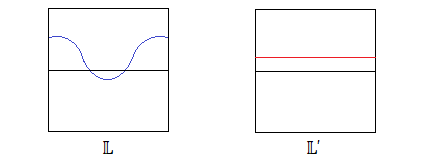}
    \caption{}
    \label{fig:loc_ham}
\end{figure}

Clearly, $\bb{L}$ and $\bb{L}'$ are not globally Hamiltonian equivalent as they share a different number of self-intersection points. Nevertheless, the blue curve can be Hamiltonian deformed into a horizontal Lagrangian section, namely, the red line. Hence $\bb{L}$ and $\bb{L}'$ are in fact locally Hamiltonian equivalent.
\end{example}

A natural question is to ask whether $HF(\bb{L}_1,\bb{L}_2)$ is invariant under local Hamiltonian equivalence. It was pointed out by Akaho and Joyce that this is not true for general local Hamiltonian isotopies (see \cite[Section 13]{AJ}). The reason behind this is the Lagrangian $h$-principle \cite{Gromov_Lagrangian_h, Lee_Lagrangian_h}, which states that two Lagrangian immersions $(L,\xi_1)$, $(L,\xi_2)$ are locally Hamiltonian equivalent if and only if there exists a smooth homotopy $\xi_t:L\to M$ from $(\xi_1,d\xi_1)$ to $(\xi_2,d\xi_2)$ and a bundle map $\til{\xi}_t:TL\to \xi^*_tTM$ covering $\xi_t$ which embeds $TL$ as a Lagrangian subbundle in $\xi_t^*TM$. But $HF(\bb{L}_1,\bb{L}_2)$ consists of quantum data coming from holomorphic disks, which is invisible to classical algebraic topology, so one would not expect these quantum data to be preserved under general local Hamiltonian isotopies.

Our goal is to find a new equivalence which is weaker than global Hamiltonian equivalence, but stronger than local Hamiltonian equivalence, such that $HF(\bb{L}_1,\bb{L}_2)$ is invariant under this equivalence.
Let us start with the following
\begin{definition}\label{def:M_lifted_Ham}
Let $\pi:\til{M}\to M$ be a finite unramified covering of a symplectic manifold $(M,\omega)$. For two Lagrangian immersions $\mathbb{L}_1 = (L_1, \xi_1), \mathbb{L}_2 = (L_2, \xi_2)$ of $M$, we say $\mathbb{L}_1$ is {\em $(\til{M},\pi)$-lifted Hamiltonian isotopic} to $\mathbb{L}_2$ if there exists a diffeomorphism $\phi:L_1\to L_2$ and Lagrangian immersions $\til{\xi}_1:L_1\to\til{M}$, $\til{\xi}_2:L_2\to\til{M}$ such that $\xi_1=\pi\circ\til{\xi}_1$, $\xi_2=\pi\circ\til{\xi}_2$ and $(L_1,\til{\xi}_1)$ is globally Hamiltonian isotopic to $(L_1,\til{\xi}_2\circ\phi)$ in $(\til{M},\pi^*\omega)$.
\end{definition}

We remark that $L_1, L_2$ and $\til{M}$ can all be disconnected. When $\mathbb{L}_1$ is $(\til{M},\pi)$-lifted Hamiltonian isotopic to $\mathbb{L}_2$, we may assume that the immersions share the same domain, i.e., $L_1=L_2$. In this case we may take $\phi$ to be the identity map.

Note that Definition \ref{def:M_lifted_Ham} does not define an equivalence relation because the relation that $\bb{L}_1$ is $(\til{M},\pi)$-lifted Hamiltonian isotopic to $\bb{L}_2$ for some finite unramified covering $\pi:\til{M}\to M$ is {\em not} transitive. So we need to make the following
\begin{definition}\label{def:lifted_Ham}
Let $(M,\omega)$ be a symplectic manifold. For two Lagrangian immersions $\mathbb{L}_1 = (L_1, \xi_1), \mathbb{L}_2 = (L_2, \xi_2)$ of $M$, we say $\mathbb{L}_1$ is {\em lifted Hamiltonian isotopic} to $\mathbb{L}_2$ if there exists an integer $l>0$ and Lagrangian immersions $\bb{L}^{(1)}:=\bb{L}_1,\bb{L}^{(2)},\dots,\bb{L}^{(l-1)},\bb{L}^{(l)}:=\bb{L}_2$ of $M$, such that $\bb{L}^{(j)}$ is $(\til{M}_j,\pi_j)$-lifted Hamiltonian isotopic to $\bb{L}^{(j+1)}$, for some finite unramified covering $\pi_j:\til{M}_j\to M$, $j=1,\dots,l-1$.
\end{definition}

Clearly, lifted Hamiltonian isotopy defines an equivalence relation on the set of Lagrangian immersions. Hence it makes sense to say that $\bb{L}_1$ is {\em lifted Hamiltonian equivalent} to $\bb{L}_2$. Note that two Lagrangian immersions $\bb{L}_1,\bb{L}_2$ are globally Hamiltonian equivalent if and only if they are $(M,id_M)$-lifted Hamiltonian isotopic to each other.

\begin{proposition}
If $\mathbb{L}_1$ and $\mathbb{L}_2$ are Lagrangian immersions which are lifted Hamiltonian equivalent, then $\mathbb{L}_1$ and $\mathbb{L}_2$ are locally Hamiltonian equivalent.
\end{proposition}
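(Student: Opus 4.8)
The plan is to reduce everything to a single step, i.e., to the case where $\bb{L}_1$ is $(\til{M},\pi)$-lifted Hamiltonian isotopic to $\bb{L}_2$ for one finite unramified covering $\pi:\til M\to M$. Since lifted Hamiltonian equivalence is by Definition \ref{def:lifted_Ham} the transitive closure of the single-step relation, and since local Hamiltonian equivalence is itself an equivalence relation (in particular transitive), it suffices to prove: if $\bb{L}_1$ is $(\til M,\pi)$-lifted Hamiltonian isotopic to $\bb{L}_2$, then $\bb{L}_1$ and $\bb{L}_2$ are locally Hamiltonian equivalent. Chaining this along the sequence $\bb{L}^{(1)},\dots,\bb{L}^{(l)}$ then gives the proposition.

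So assume $\bb{L}_1=(L_1,\xi_1)$ and $\bb{L}_2=(L_2,\xi_2)$ with $\phi:L_1\to L_2$, lifts $\til\xi_1:L_1\to\til M$, $\til\xi_2:L_2\to\til M$ satisfying $\xi_1=\pi\circ\til\xi_1$, $\xi_2=\pi\circ\til\xi_2$, and a global Hamiltonian isotopy in $(\til M,\pi^*\omega)$ between $(L_1,\til\xi_1)$ and $(L_1,\til\xi_2\circ\phi)$. By Definition \ref{def:global_Ham} there is a $1$-parameter family of Hamiltonian diffeomorphisms $\tilde\psi_t:\til M\to\til M$ with $\tilde\psi_0=\mathrm{id}$ and $\tilde\psi_1\circ\til\xi_1=\til\xi_2\circ\phi$, generated by a (possibly time-dependent) Hamiltonian $\til H_t:\til M\to\bb R$. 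Define the $1$-parameter family $\til\Xi:[0,1]\times L_1\to\til M$ by $\til\Xi(t,-):=\tilde\psi_t\circ\til\xi_1$, and then push down to $M$ by setting
$$\Xi:[0,1]\times L_1\to M,\qquad \Xi(t,-):=\pi\circ\til\Xi(t,-)=\pi\circ\tilde\psi_t\circ\til\xi_1.$$
Then $\Xi(0,-)=\pi\circ\til\xi_1=\xi_1$ and $\Xi(1,-)=\pi\circ\tilde\psi_1\circ\til\xi_1=\pi\circ\til\xi_2\circ\phi=\xi_2\circ\phi$, so $\Xi$ is exactly the kind of smooth homotopy required in Definition \ref{def:loc_Ham}. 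Note also each $\Xi(t,-)$ is a Lagrangian immersion of $(M,\omega)$: it is an immersion because $\pi$ is a local diffeomorphism and $\til\Xi(t,-)$ is an immersion, and it is isotropic because $\Xi(t,-)^*\omega=\til\Xi(t,-)^*\pi^*\omega=0$ as $\til\Xi(t,-)$ is Lagrangian for $\pi^*\omega$.

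The remaining point is the exactness condition: the $1$-form $\Xi^*\omega\big(\tfrac{d}{dt},-\big)$ on $\{t\}\times L_1$ must be exact for every $t$. Here I would use naturality of pullback under $\pi$ together with the standard identity for the variation of a Hamiltonian flow. Since $\omega=\pi_*$-image is compatible in the sense $\pi^*\omega$ is the symplectic form upstairs, we have $\Xi^*\omega=\til\Xi^*(\pi^*\omega)$, so it is enough to show $\til\Xi^*(\pi^*\omega)\big(\tfrac{d}{dt},-\big)$ is exact on $L_1$ for each $t$. But $\til\Xi(t,-)=\tilde\psi_t\circ\til\xi_1$ with $\tilde\psi_t$ a Hamiltonian isotopy generated by $\til H_t$; the generating vector field of $t\mapsto\til\Xi(t,p)$ is $X_{\til H_t}$ along the image, and contracting $\pi^*\omega$ with a Hamiltonian vector field gives an exact form, $\iota_{X_{\til H_t}}(\pi^*\omega)=d\til H_t$. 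Pulling back along the (isotropic) immersion $\til\Xi(t,-)$, one gets $\til\Xi(t,-)^*(\pi^*\omega)\big(\tfrac{d}{dt},-\big)=d\big(\til H_t\circ\til\Xi(t,-)\big)$, an exact $1$-form on $L_1$. Hence $\Xi$ is a local Hamiltonian isotopy from $\xi_1$ to $\xi_2\circ\phi$, and $\bb{L}_1$ is locally Hamiltonian equivalent to $\bb{L}_2$.

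The only real subtlety — and the step I expect to require the most care — is this last exactness verification: one must be slightly careful that the time-dependence of $\til H_t$ does not introduce extra terms, and that restricting a Hamiltonian vector field's contraction with $\pi^*\omega$ to an isotropic submanifold indeed yields the pullback of $d\til H_t$ with no correction (this is a standard computation with the formula $\frac{d}{dt}\Xi(t,-)^*\omega = \Xi(t,-)^*\mathcal L_{X_t}\omega + \dots$, using $d\omega=0$ and $\Xi(t,-)^*\omega=0$, so that $\Xi(t,-)^*\iota_{X_t}\omega = \iota_{\partial_t}\Xi^*\omega$). Everything else — disconnectedness of $L_i$ or of $\til M$, the role of $\phi$, and the passage from one covering step to the next — is handled cleanly by transitivity of local Hamiltonian equivalence and poses no obstacle.
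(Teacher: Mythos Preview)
Your proposal is correct and follows essentially the same route as the paper: reduce by transitivity to a single $(\til M,\pi)$-step, push the global Hamiltonian isotopy on $\til M$ down via $\Xi_t=\pi\circ\tilde\psi_t\circ\til\xi_1$, and verify the exactness condition by computing $\Xi^*\omega(\tfrac{d}{dt},v)=(\pi^*\omega)(X_{\til H_t},(\tilde\psi_t\circ\til\xi_1)_*v)=d_{L_1}(\til H_t\circ\tilde\psi_t\circ\til\xi_1)(v)$. The paper's proof is the same computation, written out pointwise; your worry about extra terms from time-dependence is unfounded here since the contraction $\iota_{\partial_t}\Xi^*\omega$ at fixed $t$ only sees $X_{\til H_t}$ at that instant.
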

\begin{proof}
It suffices to prove that if $\mathbb{L}_1$ and $\mathbb{L}_2$ are $(\til{M},\pi)$-lifted Hamiltonian isotopic for some finite unramified covering $\pi:\til{M}\to M$, then they are locally Hamiltonian equivalent. Let $\widetilde{\xi}_1,\widetilde{\xi}_2:L\to\til{M}$ be lifts of $\xi_1,\xi_2$ respectively.
By assumption, there exists a family of Hamiltonian diffeomorphisms $\widetilde{\Xi}_t:\til{M}\to\til{M}$ such that $\widetilde{\Xi}_0=id$ and $\widetilde{\Xi}_1\circ\widetilde{\xi}_1=\widetilde{\xi}_2$. Since $\pi:\til{M}\to M$ is an unramified covering, $\Xi_t:=\pi\circ\widetilde{\Xi}_t\circ\widetilde{\xi}_1:L\to M$ defines a family of Lagrangian immersions such that
\begin{align*}
\Xi_0 & = \pi\circ\widetilde{\Xi}_0\circ\widetilde{\xi}_1 = \xi_1,\\
\Xi_1 & = \pi\circ\widetilde{\Xi}_1\circ\widetilde{\xi}_1 = \xi_2.
\end{align*}
We shall prove that
$$\Xi^*(\omega)(\frac{d}{dt},-)$$
is exact on $\{t\}\times L$ for all $t\in[0,1]$. Let $h:[0,1]\times\til{M}\to\mathbb{R}$ be a Hamiltonian that generate $\widetilde{\Xi}_t$. We claim that
$$d_L(h_t\circ\widetilde{\Xi}_t\circ\til{\xi}_1)=\Xi^*(\omega)(\frac{d}{dt},-).$$
For any $v\in\Gamma(L,TL)$,
\begin{align*}
\Xi^*(\omega)(\frac{d}{dt},v) & = \omega(\Xi_*\frac{d}{dt},\Xi_*v)\\
& = (\pi^*\omega)((\til{\Xi}\circ\til{\xi}_1)_*\frac{d}{dt},(\til{\Xi}_t\circ\til{\xi}_1)_*v)\\
& = (\pi^*\omega)(X_{h_t}(\til{\Xi}_t\circ\til{\xi}_1),(\til{\Xi}_t\circ\til{\xi}_1)_*v)\\
& = d_{\til{M}}h_t((\til{\Xi}_t\circ\til{\xi}_1)_*v)\\
& = d_L(h_t\circ\widetilde{\Xi}_t\circ\til{\xi}_1)(v),
\end{align*}
so we are done.
\end{proof}

As a summary, we have

\begin{corollary}\label{cor:property_lifted_Ham}
Let $\mathbb{L}_1$, $\mathbb{L}_2$ Lagrangian immersions. Consider the following statements:
\begin{itemize}
\item [a)] $\mathbb{L}_1$ and $\mathbb{L}_2$ are globally Hamiltonian equivalent.
\item [b)] $\mathbb{L}_1$ and $\mathbb{L}_2$ are lifted Hamiltonian equivalent.
\item [c)] $\mathbb{L}_1$ and $\mathbb{L}_2$ are locally Hamiltonian equivalent.
\end{itemize}
Then we have the implications $a)\Rightarrow b)\Rightarrow c)$.
\end{corollary}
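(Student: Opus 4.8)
The plan is to dispose of the two implications separately, since each has already been essentially reduced to a statement proved earlier in the section. For $a)\Rightarrow b)$: as remarked just after Definition \ref{def:lifted_Ham}, two Lagrangian immersions are globally Hamiltonian equivalent precisely when they are $(M,\mathrm{id}_M)$-lifted Hamiltonian isotopic. Thus, if $\mathbb{L}_1$ and $\mathbb{L}_2$ are globally Hamiltonian equivalent, then taking $l=2$, $\bb{L}^{(1)}=\bb{L}_1$, $\bb{L}^{(2)}=\bb{L}_2$, and the single trivial covering $\pi_1=\mathrm{id}_M:M\to M$ in Definition \ref{def:lifted_Ham} exhibits $\mathbb{L}_1$ as lifted Hamiltonian isotopic to $\mathbb{L}_2$. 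So $a)\Rightarrow b)$ is immediate from the definitions. I would spell this out in one or two sentences only.

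For $b)\Rightarrow c)$: first I would observe that local Hamiltonian equivalence is itself an equivalence relation (or at least reflexive, symmetric, and transitive on the relevant class — reflexivity and symmetry are clear, and transitivity follows by concatenating the two families $\Xi$ and reparametrizing, with the exactness of $\Xi^*(\omega)(\tfrac{d}{dt},-)$ on each slice preserved under concatenation). Granting this, since lifted Hamiltonian equivalence is, by Definition \ref{def:lifted_Ham}, the transitive closure of the relation ``$(\til M,\pi)$-lifted Hamiltonian isotopic for some finite unramified covering $\pi$'', it suffices to show that each single step $\bb{L}^{(j)}\to\bb{L}^{(j+1)}$, i.e.\ a $(\til M,\pi)$-lifted Hamiltonian isotopy, implies local Hamiltonian equivalence; then chaining along $j=1,\dots,l-1$ gives the conclusion. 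But this single-step implication is exactly the content of the Proposition immediately preceding the corollary, whose proof constructs the family $\Xi_t=\pi\circ\widetilde\Xi_t\circ\widetilde\xi_1$ and verifies $d_L(h_t\circ\widetilde\Xi_t\circ\widetilde\xi_1)=\Xi^*(\omega)(\tfrac{d}{dt},-)$. So I would simply invoke that Proposition together with transitivity of local Hamiltonian equivalence.

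The only point requiring any genuine (if routine) argument is the transitivity of local Hamiltonian equivalence, since Definition \ref{def:loc_Ham} is stated for a single pair and it is not a priori packaged as an equivalence relation in the excerpt. Given families $\Xi:[0,1]\times L_1\to M$ realizing the equivalence $\mathbb{L}_1\sim\mathbb{L}_2$ and $\Xi':[0,1]\times L_2\to M$ realizing $\mathbb{L}_2\sim\mathbb{L}_3$, together with diffeomorphisms $\phi:L_1\to L_2$, $\phi':L_2\to L_3$, one forms the concatenation along $L_1$ via $\phi$: run $\Xi$ on $[0,\tfrac12]$ and $\Xi'\circ(\mathrm{id}\times\phi)$, suitably reparametrized, on $[\tfrac12,1]$, with composite diffeomorphism $\phi'\circ\phi$. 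Each time-slice one-form $\Xi^*(\omega)(\tfrac{d}{dt},-)$ is exact by hypothesis, and $(\Xi'\circ(\mathrm{id}\times\phi))^*(\omega)(\tfrac{d}{dt},-)=\phi^*\bigl(\Xi'^*(\omega)(\tfrac{d}{dt},-)\bigr)$ is the pullback of an exact form, hence exact; the reparametrization only scales $\tfrac{d}{dt}$ by a positive function, preserving exactness. Thus the concatenated family witnesses $\mathbb{L}_1\sim\mathbb{L}_3$. I do not anticipate any real obstacle here; the whole corollary is a formal consequence of the preceding definitions and proposition, and the write-up should be short. If one prefers to avoid invoking transitivity abstractly, one can instead construct the local Hamiltonian equivalence between $\bb{L}_1$ and $\bb{L}_2$ directly by concatenating the families produced at each step by the preceding Proposition, which amounts to the same computation.
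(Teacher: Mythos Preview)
Your proposal is correct and follows essentially the same approach as the paper: the corollary is stated there ``as a summary'' with no separate proof, since $a)\Rightarrow b)$ is the remark after Definition~\ref{def:lifted_Ham} (global Hamiltonian equivalence is $(M,\mathrm{id}_M)$-lifted Hamiltonian isotopy) and $b)\Rightarrow c)$ is exactly the preceding Proposition, whose proof begins with the same reduction to the single-step case that you spell out (implicitly using transitivity of local Hamiltonian equivalence). Your only addition is making that transitivity argument explicit, which the paper takes for granted in its ``It suffices to prove\ldots'' clause.
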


\begin{remark}
When $\bb{L}_1,\bb{L}_2$ are embedded and locally Hamiltonian equivalent, they are Hamiltonian isotopic to each other if we can choose the isotopy $\Xi_t$ to be an embedding for all $t\in [0,1]$. Hence the statements $a), b), c)$ in Corollary \ref{cor:property_lifted_Ham} are all equivalent in this case.
\end{remark}

\subsection{The invariance theorem}

We study the invariance property of the immersed Floer cohomology under lifted Hamiltonian deformations.

Let $\mathbb{L}_1,\mathbb{L}_2$ be a pair of compact, unobstructed Lagrangian immersions of $(M,\omega)$. Let $\til{\xi}_1:L_1\to\til{M}_1$ and $\til{\xi}_2:L_2\to\til{M}_2$ be liftings of $\bb{L}_1,\bb{L}_2$ to some finite unramified coverings $\pi_1:\til{M}_1\to M$ and $\pi_2:\til{M}_2\to M$ of $M$ respectively. Note that $(\til{M}_j,\pi_j)$ may be equal to $(M,id_M)$, that is, the trivial covering of $M$.

Consider the following commutative diagram:
\begin{equation*}
\xymatrix{
L_1\times_M\til{M}_2 \ar[d]_{\pi_{L_1}} \ar@{->}[r]^{\til{\xi}_1\times id} & {\til{M}_1\times_M\til{M}_2} \ar[d]_{\pi_M} \ar@{<-}[r]^{id\times\til{\xi}_2}& \til{M}_1\times_ML_2 \ar[d]_{\pi_{L_2}}\\
L_1 \ar@{->}[r]^-{\til{\xi}_1} & {\til{M}_1\longrightarrow M \longleftarrow\til{M}_2} \ar@{<-}[r]^-{\til{\xi}_2} & L_2
}
\end{equation*}
Note that all the vertical maps are finite unramified covering maps (the domains of them can be disconnected in general, but they are still smooth manifolds).

Since $(L_1,\xi_1)$ and $(L_2,\xi_2)$ are Lagrangian immersions, by equipping $\til{M}_1\times_M\til{M}_2$ with the pullback symplectic structure via $\pi_M$, it is easy to see that $(L_1\times_M\til{M}_2,\til{\xi}_1\times id)$ and $(\til{M}_1\times_ML_2,id\times\til{\xi}_2)$ are Lagrangian immersions of $\til{M}_1\times_M\til{M}_2$, whose images are given by $\til{\xi}_1(L_1)\times_M\til{M}_2$ and $\til{M}_1\times_M\til{\xi}_2(L_2)$ respectively.

To simplify the notation, we let $\til{M}=\til{M}_1\times_M\til{M}_2$, $\til{L}_1=\til{\xi}_1(L_1)\times_M\til{M}_2$ and $\til{L}_2=\til{M}_1\times_M\til{\xi}_2(L_2)$. Points in $\til{M}$ are denoted by $\til{p}$.

\begin{lemma}\label{lem:intersection_points_correspondence}
Under Assumption \ref{assumption:A}, we have
\begin{itemize}
\item [a)] $\pi_M:\til{L}_1\cap\til{L}_2\to\xi_1(L_1)\cap\xi_2(L_2)$ is a 1-1 correspondence.
\item [b)] The map $(\pi_M)_*:\pi_2(\til{M};\til{L}_1,\til{L}_2;\til{p},\til{q})\to\pi_2(M;\xi_1(L_1),\xi_2(L_2);p,q)$ is bijective.
\item [c)] $(\pi_M)_*$ preserves the Maslov index, i.e., $\mu(\til{\beta})=\mu((\pi_M)_*\til{\beta})$.
\item [d)] If  $(M,\omega,J)$ is a Calabi-Yau manifold, then $\pi_M$ preserves grading, i.e., for any $\til{p}\in\til{L}_1\cap\til{L}_2$, $\deg(\pi_M(\widetilde{p}))=\deg(\widetilde{p})$.
\end{itemize}
\end{lemma}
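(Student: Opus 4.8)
The plan is to prove all four statements by exploiting the fact that $\pi_M:\til M\to M$ is a finite unramified covering, which is in particular a local symplectomorphism with respect to the pullback symplectic form $\pi_M^*\omega$, and that the images $\til L_1,\til L_2$ are precisely the fiber products $\til\xi_1(L_1)\times_M\til M_2$ and $\til M_1\times_M\til\xi_2(L_2)$, so that $\pi_M$ restricted to each of them is again an unramified covering of $\xi_1(L_1)$ and $\xi_2(L_2)$ respectively. The key structural observation is that a point $\til p\in\til M$ lies in $\til L_1\cap\til L_2$ if and only if $\til p=(\til a,\til b)$ with $\til a\in\til M_1$, $\til b\in\til M_2$, $\pi_1(\til a)=\pi_2(\til b)=:p$, and $p\in\xi_1(L_1)\cap\xi_2(L_2)$ — but by Assumption \ref{assumption:A}, $p$ is not a self-intersection point of either immersion, so $p$ has a \emph{unique} preimage in $\xi_1(L_1)$ and a unique preimage in $\xi_2(L_2)$; tracing through the fiber-product description, $p$ has a unique preimage in each of $\til L_1$ and $\til L_2$ as well, and these preimages must coincide. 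This gives (a).

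For (b), I would use the fact that $\pi_M$ is a covering map, hence has the homotopy lifting property. Given a strip $u:(\Delta,\partial^-\Delta,\partial^+\Delta)\to(M,\xi_1(L_1),\xi_2(L_2))$ with the required continuous lifts $u_1^-$ to $L_1$ and $u_2^+$ to $L_2$ and endpoints $p,q$, one lifts $u$ through $\pi_M$ starting from the (unique, by part (a)) point $\til p$ over $p$; since $\Delta$ is simply connected the lift $\til u$ exists and is unique, and the boundary conditions $\til u(\partial^\mp\Delta)\subset\til L_{1,2}$ follow because the $\til M_j$-valued lifts of $u$ along the boundary are forced by the lifts $u_1^-,u_2^+$ combined with the lift to $\til M$. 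Uniqueness of lifts of homotopies gives that $(\pi_M)_*$ is well-defined on homotopy classes and injective, and the lifting construction gives surjectivity; one also checks the symplectic area is preserved, $\omega(u)=(\pi_M^*\omega)(\til u)$, which is needed for the later identification of Floer differentials. Part (c) is then essentially immediate: the Maslov index is computed from a symplectic trivialization of $u^*TM$ and the Lagrangian paths traced out by the tangent spaces along the boundary together with the canonical short paths at $p,q$; since $\pi_M$ is a local symplectomorphism, $d\pi_M$ identifies $\til u^*T\til M$ with $u^*TM$ symplectically and carries $d\til\xi_i(T\til L_i)$ to $d\xi_i(TL_i)$, so the entire loop in the Lagrangian Grassmannian and hence its degree is unchanged. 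For (d), grading in the Calabi-Yau case is defined using a phase function coming from the holomorphic volume form; since $\pi_M$ is an unramified holomorphic covering, $\pi_M^*$ of the holomorphic volume form of $M$ is a holomorphic volume form on $\til M$, and the phases and hence the degrees of intersection points are preserved.

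I expect the main obstacle to be the careful bookkeeping in part (b): one must be precise about \emph{which} lifts one is tracking, since a priori there are three covering spaces $\til M_1,\til M_2,\til M$ in play, the domains may be disconnected, and the definition of $\pi_2(M;\dots;p,q)$ carries the extra data of the continuous boundary lifts to $L_1$ and $L_2$. The subtlety is to verify that lifting $u$ to $\til M$ is \emph{compatible} with lifting the boundary arcs to $L_1\times_M\til M_2$ and $\til M_1\times_M L_2$ — i.e. that the lift $\til u$ automatically has continuous lifts of its boundary to the domains $L_1\times_M\til M_2$ and $\til M_1\times_M L_2$ — which ultimately reduces to the universal property of fiber products plus the uniqueness-of-lifts statement, but deserves to be spelled out. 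Everything else (c), (d), and the area identity is a routine consequence of $\pi_M$ being a local biholomorphism and local symplectomorphism once the correspondence of strips is in hand.
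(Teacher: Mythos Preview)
Your proposal is correct and follows essentially the same approach as the paper's proof: part (a) via uniqueness of preimages under Assumption~\ref{assumption:A}, part (b) via the homotopy lifting property and careful tracking of boundary lifts (the paper spells out exactly the compatibility you flag as the main subtlety), and parts (c), (d) via the fact that $\pi_M$ is a local symplectomorphism/biholomorphism. One small imprecision in your outline of (a): it is \emph{not} true that $p$ has a unique preimage in $\til L_1$ (there is one for each point of $\pi_2^{-1}(p)\subset\til M_2$); what is true, and what the paper argues, is that $p$ has a unique preimage in $\til L_1\cap\til L_2$, since membership in the intersection forces both coordinates $\til m_1=\til\xi_1(l_1)$ and $\til m_2=\til\xi_2(l_2)$ to be determined by the unique $l_1,l_2$.
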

\begin{proof}

\begin{itemize}
\item [a)]
Suppose $p\in\xi_1(L_1)\cap \xi_2(L_2)$. Then there exists $l_1\in L_1$ and $l_2\in L_2$ such that $\xi_1(l_1)=p=\xi_2(l_2)$. Since $(\pi_1\circ\til{\xi}_1)(l_1)=\xi_1(l_1)=p=(\pi_2\circ\til{\xi}_2)(l_2)$, we have $$(\til{\xi}_1(l_1);p;\til{\xi}_2(l_2))\in\til{L}_1\cap\til{L}_2$$
and $\pi_M(\til{\xi}_1(l_1);p;\til{\xi}_2(l_2))=p$. This proves surjectivity. For injectivity, note that any intersection point of $\til{L}_1$ and $\til{L}_2$ is of the form $(\til{\xi}_1(l_1);p;\til{\xi}_2(l_2))$ for some $l_1\in L_1,l_2\in L_2$ and $\xi_1(l_1)=p=\xi(l_2)$. If
$$\pi_M(\til{\xi}_1(l_1);p;\til{\xi}_2(l_2))=\pi_M(\til{\xi}_1(l_1');p';\til{\xi}_2(l_2')),$$
then $p=p'$ and so
\begin{align*}
\xi_1(l_1) & = p = p' = \xi_1(l_1'),\\
\xi_2(l_2) & = p = p' = \xi_2(l_2').
\end{align*}
Since $p$ is not a self-intersection point of $\xi_1(L_1)$ nor $\xi_2(L_2)$, we have $l_1=l_1'$ and $l_2=l_2'$. Hence $(\til{\xi}_1(l_1);p;\til{\xi}_2(l_2))=(\til{\xi}_1(l_1');p';\til{\xi}_2(l_2'))$.

\item [b)]
We first prove that $(\pi_M)_*$ is well-defined, i.e., the image of each disk under $\pi_M$ satisfies the required boundary data.

Let $\til{u}:\Delta\to\til{M}$ represent $\til{\beta}$ with boundary data
$$\til{u}(\partial^-\Delta)\subset\til{L}_1,\quad \til{u}(\partial^+\Delta)\subset\til{L}_2,$$
$$\til{u}(-1)=\til{q},\quad \til{u}(1)=\til{p}.$$
Set $u:=\pi_M\circ\til{u}$. Then clearly, $u$ has boundary data
$$u(\partial^-\Delta)\subset\xi_1(L_1),\quad u(\partial^+\Delta)\subset\xi_2(L_2),$$
$$u(-1)=q,\quad u(1)=p.$$
To obtain the liftings on the boundary, we recall we already have the liftings $\til{u}_1^-:\partial^-\Delta\to L_1\times_M\til{M}_2$ and $\til{u}_2^+:\partial^+\Delta\to\til{M}_1\times_BL_2$ of $\til{u}|_{\partial^-\Delta}$ and $\til{u}|_{\partial^+\Delta}$ respectively. By definition, they satisfy
\begin{align*}
(\til{\xi}_1\times id)\circ\til{u}_1^- & = \til{u}|_{\partial^-\Delta},
\\
(id \times\til{\xi}_2)\circ\til{u}_2^+ & = \til{u}|_{\partial^+\Delta}.
\end{align*}
We define
\begin{align*}
u_1^- & := \pi_{L_1}\circ\til{u}_1^-:\partial^-\Delta\to L_1,\\
u_2^+ & := \pi_{L_2}\circ\til{u}_2^+:\partial^+\Delta\to L_2.
\end{align*}
Then $\xi_1\circ u_1^-=\xi_1\circ\pi_{L_1}\circ\til{u}_1^-=\pi_M\circ(\til{\xi}_1\times id)\circ\til{u}_1^-=\pi_M\circ\til{u}|_{\partial^-\Delta}=u|_{\partial^-\Delta}$. Similarly, $\xi_2\circ u_2^+=u|_{\partial^+\Delta}$. Hence $(\pi_M)_*$ is well-defined. Injectivity follows from the homotopy lifting property.

For surjectivity, let $u:\Delta\to M$ be a representative of $\beta$ with boundary data
$$u(\partial^-\Delta)\subset\xi_1(L_1),\quad u(\partial^+\Delta)\subset\xi_2(L_2),$$
$$u(-1)=q,\quad u(1)=p.$$
Let $\til{u}:\Delta\to\til{M}$ be the lift of $u$ with $\til{u}(1)=\til{p}$. We claim that $\til{u}(\partial^-\Delta)\subset\til{L}_1$. Recall that we have a lift $u_1^-:\partial^-\Delta\to L_1$ of $u|_{\partial^-\Delta}$. Since $\pi_{L_1}:L_1\times_M\til{M}_2\to L_1$ is an unramified covering of $L_1$ and $\xi_1^{-1}(p)$ consists of only one point, there is a lift $\til{u}_1^-:\partial^-\Delta\to L_1\times_M\til{M}_2$ of $u_1^-$ such that $((\til{\xi}_1\times id)\circ\til{u}_1^-)(1)=\til{p}$. Note that
$$\pi_{\til{M}_1}\circ(\til{\xi}_1\times id)\circ\til{u}_1^-=\til{\xi}_1\circ\pi_{L_1}\circ\til{u}_1^-=\til{\xi}_1\circ u_1^-.$$
Hence
\begin{align*}
\pi_M\circ(\til{\xi}_1\times id)\circ\til{u}_1^-=&\pi_1\circ\pi_{\til{M}_1}\circ(\til{\xi}_1\times id)\circ\til{u}_1^-
\\=&\xi_1\circ u_1^-
\\=&u|_{\partial^-\Delta}=\pi_M\circ\til{u}|_{\partial^-\Delta}.
\end{align*}
By uniqueness, we have $\til{u}|_{\partial^-\Delta}=(\til{\xi}_1\times id)\circ\til{u}_1^-$. In particular, we have $\til{u}(\partial^-\Delta)\subset\til{L}_1$. Similarly, we have $\til{u}(\partial^+\Delta)\subset\til{L}_2$. These two inclusions imply
$$\til{u}(-1)\in\til{u}(\partial^+\Delta\cap\partial^-\Delta)\subset\til{L}_1\cap\til{L}_2.$$
Because $u(-1)=q$, we must have $\til{u}(-1)=\til{q}$ by uniqueness.

\item [c)] Since, via the differential $d\pi_M:T\til{M}\to\pi_M^*TM$, $T_{\til{q}}\til{M}$ can be identified symplectically with $T_qM$, the Lagrangian Grassmannians $LGr(T_{\til{q}}\til{M},\til{\omega}_{\til{q}})$ and $LGr(T_qM,\omega_q)$ are naturally isomorphic via $d\pi_M$.Clearly, the Lagrangian paths
\begin{align*}
&s\mapsto d\til{\xi_1}(T_{\til{u}_1^-(s,0)}(L_1\times_M\til{M}_2)),
\\&s\mapsto d\xi_1(T_{u_1^-(s,0)}L_1)
\end{align*}
can also be identified via $d\pi_M$. Similarly, $d\pi_M$ also identifies
\begin{align*}
&s\mapsto d\til{\xi_2}(T_{\til{u}_2^+(s,1)}(\til{M}_1\times_ML_2)),
\\&s\mapsto d\xi_1(T_{u_2^+(s,1)}L_2).
\end{align*}
Since $\pi_M$ is an unramified covering map, it is a local symplectomorphism. It follows that the canonical short paths are also identified via $d\pi_M$. Hence the Maslov indices are preserved under $(\pi_M)_*$.

\item [d)]
Since $\pi_M$ is an unramified covering map, $(\til{M},\til{\omega},\til{J})$ is naturally a Calabi-Yau manifold with the pullback structures $\til{\omega}=\pi_M^*\omega$, $\til{J}=\pi_M^*J$ and so each intersection point between $\til{L}_1$ and $\til{L}_2$ can be graded. Let $\widetilde{p}\in\til{L}_1\cap\til{L}_2$. The grading of $\widetilde{p}$ only depends on the angles that $\til{L}_1$ and $\til{L}_2$ intersect (See \cite{AJ}, Section 12), which is a local property. Since $\pi$ is an unramified $(\widetilde{J},J)$-holomorphic covering map, we see that the angles of intersection at $\widetilde{p}$ is the same as the angles of intersection at $\pi_M(\widetilde{p})$.
\end{itemize}
\end{proof}

Next, we show that the immersed Floer cohomology $HF(\bb{L}_1,\bb{L}_2)$ can be computed by the immersed Floer cohomology of the liftings $(L_1\times_M\til{M}_2,\til{\xi}_1\times id),(\til{M}_1\times_ML_2,id\times\til{\xi}_2)$. Recall that we have chosen a family of $\omega$-compatible almost complex structures $\{J_t\}_{t\in[0,1]}$. Let $\{\til{J}_t\}_{t\in[0,1]}$ be the pullback almost complex structure of $\{J_t\}_{t\in[0,1]}$ via the unramified covering map $\pi_M:\til{M}\to M$. Then we have the following

\begin{lemma}
For any $p,q\in\xi_1(L_1)\cap\xi_2(L_2)$, $\beta\in\pi_2(M;L_1,L_2;p,q)$ and $u\in\cu{M}(p,q;\beta)$,
there exist unique $\til{p},\til{q}\in\til{L}_1\cap\til{L}_2$, $\til{\beta}\in\pi_2(\til{M};\til{L}_1,\til{L}_2;\til{p},\til{q})$ and $\til{u}\in\cu{M}(\til{p},\til{q};\til{\beta})$ such that $\pi_M(\til{p})=p$, $\pi_M(\til{q})=q$ and $\pi_M\circ\til{u}=u$. Moreover, $(\pi_M)_*$ induces an orientation preserving isomorphism of oriented Kuranishi spaces
$$\cu{M}(p,q;\beta) \cong \cu{M}(\til{p},\til{q};\til{\beta}).$$
\end{lemma}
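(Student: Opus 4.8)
The plan is to construct the isomorphism $\cu{M}(p,q;\beta)\to\cu{M}(\til p,\til q;\til\beta)$ by lifting each $J_t$-holomorphic disk through the unramified covering $\pi_M$, reusing verbatim the lifting argument from the surjectivity part of Lemma~\ref{lem:intersection_points_correspondence}(b), and then to promote it to an isomorphism of oriented Kuranishi spaces by exploiting that $\pi_M$ is simultaneously a local symplectomorphism and a local $(\til J_t,J_t)$-biholomorphism, so that every ingredient of the Fukaya--Oh--Ohta--Ono construction of the Kuranishi structure and its orientation is pulled back along $\pi_M$.

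First I would pin down the lifted objects. By Lemma~\ref{lem:intersection_points_correspondence}(a), $p$ and $q$ have unique preimages $\til p,\til q\in\til L_1\cap\til L_2$, and by Lemma~\ref{lem:intersection_points_correspondence}(b) there is a unique $\til\beta\in\pi_2(\til M;\til L_1,\til L_2;\til p,\til q)$ with $(\pi_M)_*\til\beta=\beta$; this already gives uniqueness of $(\til p,\til q,\til\beta)$, and of any lift $\til u$ once it is required to lie in $\cu{M}(\til p,\til q;\til\beta)$. For existence, take a representative $u:\Delta\to M$; since $\Delta$ is simply connected, $u$ has a unique lift $\til u:\Delta\to\til M$ with $\til u(1)=\til p$. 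The argument in the proof of Lemma~\ref{lem:intersection_points_correspondence}(b) --- using that $\pi_{L_1},\pi_{L_2}$ are unramified covers and that $p,q$ have unique preimages under $\xi_1$ and under $\xi_2$ --- shows that $\til u(\partial^-\Delta)\subset\til L_1$, $\til u(\partial^+\Delta)\subset\til L_2$, that $\til u$ carries the required continuous boundary liftings to $L_1\times_M\til M_2$ and $\til M_1\times_ML_2$, and that $\til u(-1)=\til q$; since $(\pi_M)_*[\til u]=[u]=\beta$ we get $[\til u]=\til\beta$. Because $\til J_t=\pi_M^*J_t$ and $d\pi_M$ is a fibrewise isomorphism intertwining $\til J_t$ and $J_t$, the equation $\dbar_{J_t}u=0$ is equivalent to $\dbar_{\til J_t}\til u=0$; and $\til\omega=\pi_M^*\omega$ gives $\omega(\til u)=\omega(u)$, so $\til u$ has finite energy. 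As $\pi_M$ is equivariant for $\bb{R}$-translation in $s$, the assignment $[u]\mapsto[\til u]$ descends to a well-defined map $\Phi:\cu{M}(p,q;\beta)\to\cu{M}(\til p,\til q;\til\beta)$. Its inverse is $[\til u]\mapsto[\pi_M\circ\til u]$: pushing a disk down trivially preserves boundary data, holomorphicity and energy, and the lift of $\pi_M\circ\til u$ through $\til p$ is $\til u$ by uniqueness of lifts. Hence $\Phi$ is a bijection, and by Lemma~\ref{lem:intersection_points_correspondence}(c) it matches Maslov indices, hence virtual dimensions.

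It then remains to upgrade $\Phi$ to an isomorphism of oriented Kuranishi spaces. I would equip $L_1\times_M\til M_2$, $\til M_1\times_ML_2$ and $\til M$ with the relative spin structures and $\omega$-compatible families of almost complex structures pulled back from $L_1,L_2$ and $M$ along the covering maps, and choose the Kuranishi data (obstruction spaces, Kuranishi neighbourhoods) on $\cu{M}(p,q;\beta)$ first and pull them back. Since $\pi_M$ is a local diffeomorphism, $d\pi_M$ identifies $\til u^*T\til M$ with $u^*TM$ together with the boundary Lagrangian subbundles, hence identifies the linearized Cauchy--Riemann operators at $\til u$ and at $u=\pi_M\circ\til u$; consequently kernels, cokernels, obstruction bundles and Kuranishi maps correspond, and the Gromov compactifications correspond as well, because sphere and disk bubbles are simply connected and $\til M$ is compact (a finite cover of the compact $M$), so every stable-map limit upstairs is the lift of one downstairs. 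Finally, the canonical orientation of each moduli space is built from the determinant line of this linearized operator together with the relative spin trivialization, both of which pull back under $\pi_M$, and $\pi_M$ is orientation-preserving; hence $\Phi$ preserves orientations.

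The step I expect to be the main obstacle is precisely this last one: making rigorous that $\Phi$ is an isomorphism of \emph{oriented} Kuranishi spaces. Conceptually it is forced, since $\pi_M$ is a local symplectomorphism and local biholomorphism and the whole construction of \cite{FOOO1, FOOO2} is local in nature, so it pulls back; but a careful write-up requires unwinding the definitions of Kuranishi neighbourhood, obstruction bundle and the spin-structure-induced orientation and checking their functoriality under an unramified covering --- in particular that relative spin structures, and hence the canonical orientations on the moduli spaces, behave compatibly with pullback.
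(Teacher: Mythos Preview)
Your proposal is correct and follows essentially the same approach as the paper: both use Lemma~\ref{lem:intersection_points_correspondence} to obtain the unique lifts $\til p,\til q,\til\beta$ and the bijection of moduli spaces via the holomorphic covering $\pi_M$, and then identify the linearized Cauchy--Riemann operators through the pullback isomorphisms $T\til M\cong\pi_M^*TM$, $T(L_1\times_M\til M_2)\cong\pi_{L_1}^*TL_1$, $T(\til M_1\times_ML_2)\cong\pi_{L_2}^*TL_2$ to conclude that the Kuranishi structures and orientations agree. Your write-up is in fact more detailed than the paper's on the points of energy, compactification, and orientation, but the underlying argument is the same.
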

\begin{proof}
The existence and uniqueness of $\til{p},\til{q}$ follow from Lemma \ref{lem:intersection_points_correspondence}. Also, since $\pi_M$ is an unramified $(\til{J}_t,J_t)$-holomorphic covering map, the proof of Part b) of Lemma \ref{lem:intersection_points_correspondence} has already yielded the correspondence between $\til{J}_t$-holomorphic disks in $\til{M}$ and $J_t$-holomorphic disks in $M$ with the given boundary data and lifting properties.

So far, we see that $(\pi_M)_*$ gives a bijection between the sets $\cu{M}(p,q;\beta)$ and $\cu{M}(\til{p},\til{q};\til{\beta})$. By \cite{FOOO2}, the Kuranishi structure of $\cu{M}(p,q;\beta)$ is governed by the linearized Cauchy-Riemann operator
$$D_u\dbar_{J_t}:W^{1,p}(u^*TM;u|_{\partial^-\Delta}^*d\xi_1(TL_1),u|_{\partial^+\Delta}^*d\xi_2(TL_2))\to L^p(u^*TM)$$
at every $u\in\cu{M}(p,q;\beta)$.
In order to prove that we have an isomorphism of Kuranishi spaces, we need to show that the linearized Cauchy-Riemann operator $D_{\til{u}}\dbar_{\til{J}_t}$ can be identified with $D_u\dbar_{J_t}$ every point $\til{u}$. Since $\pi_M:\til{M}\to M$, $\pi_{L_1}:L_1\times_M\til{M}_2\to L_1$, $\pi_{L_2}:\til{M}_1\times_ML_2\to L_2$ are all covering maps, the tangent bundles are identified with the pull-backs:
$$T\til{M}\cong\pi_M^*TM,\quad T(L_1\times_M\til{M}_2)\cong\pi_{L_1}^*TL_1,\quad T(\til{M}_1\times_ML_2)\cong\pi_{L_2}^*TL_2.$$
Hence we have the identification between the domain (resp. image) of $D_{\til{u}}\dbar_{\til{J}_t}$ and the domain (resp. image) of $D_u\dbar_{J_t}$ as Banach spaces. The two linearized Cauchy-Riemann operators are then identified and $\cu{M}(\til{p},\til{q};\til{\beta})$ inherits a natural Kuranishi structure so that $(\pi_M)_*$ is an orientation preserving isomorphism.
\end{proof}

\begin{proposition}\label{thm:chain_isom}
Let $\mathbb{L}_1,\mathbb{L}_2$ be Lagrangian immersions of $(M,\omega)$. The projection map $\pi_M:\til{M}\to M$ induces a canonical isomorphism between Floer complexes
$$(CF(\til{L}_1,\til{L}_2),m_1)\cong(CF(\mathbb{L}_1,\mathbb{L}_2),m_1).$$
\end{proposition}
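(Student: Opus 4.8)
The plan is to leverage the three lemmas that have already been established in the excerpt, so that Proposition~\ref{thm:chain_isom} follows essentially by assembling them. The key observation is that Lemma~\ref{lem:intersection_points_correspondence}(a) gives a bijection $\pi_M:\til{L}_1\cap\til{L}_2\to\xi_1(L_1)\cap\xi_2(L_2)$ on generators, hence a $\Lambda_{nov}$-linear isomorphism of the underlying graded vector spaces $CF(\til{L}_1,\til{L}_2)\cong CF(\mathbb{L}_1,\mathbb{L}_2)$ sending $\til{p}\mapsto p=\pi_M(\til{p})$; part (d) (or (c)) guarantees this isomorphism respects the grading. It remains to check that this isomorphism intertwines the two Floer differentials $m_1$.

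The second step is to unwind the definition of $m_1$ on both sides and match the contributions term by term. First I would fix $p,q$ and a class $\beta\in\pi_2(M;\xi_1(L_1),\xi_2(L_2);p,q)$ with $\mu(\beta)=1$; by Lemma~\ref{lem:intersection_points_correspondence}(b) there is a unique lifted class $\til{\beta}\in\pi_2(\til{M};\til{L}_1,\til{L}_2;\til{p},\til{q})$ with $(\pi_M)_*\til{\beta}=\beta$, and by part (c) we have $\mu(\til{\beta})=\mu(\beta)=1$, so the summation ranges over $\mu=1$ classes match up. Next, the preceding unnamed lemma (the one immediately before Proposition~\ref{thm:chain_isom}) provides an orientation-preserving isomorphism of oriented Kuranishi spaces $\cu{M}(p,q;\beta)\cong\cu{M}(\til{p},\til{q};\til{\beta})$ induced by $(\pi_M)_*$; this identifies the index $1$ rigid strips and their signs $(-1)^{\mathrm{sign}(u)}$. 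Finally, since $\pi_M$ is a local symplectomorphism and $\til{\omega}=\pi_M^*\omega$, the symplectic areas agree, $\til{\omega}(\til{u})=\omega(\pi_M\circ\til{u})=\omega(u)$, so the Novikov weights $T^{\omega(u)}$ coincide. Summing over all $q$, all $\beta$ with $\mu(\beta)=1$, and all $u$, the coefficient of $q$ in $m_1(p)$ on the base equals the coefficient of $\til{q}$ in $m_1(\til{p})$ upstairs, which is precisely the statement that the generator bijection is a chain map. Chasing this in reverse (using surjectivity of $(\pi_M)_*$ on classes and on Kuranishi spaces) shows the inverse is also a chain map, so we obtain the desired isomorphism of Floer complexes.

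I do not expect any serious obstacle here, since all the hard analytic and combinatorial content has been front-loaded into Lemma~\ref{lem:intersection_points_correspondence} and the subsequent Kuranishi-isomorphism lemma; the proof of the proposition is really just bookkeeping. The one point that deserves a careful sentence is the compatibility of orientations and signs: one must invoke that the identification of linearized Cauchy--Riemann operators $D_{\til u}\dbar_{\til J_t}\cong D_u\dbar_{J_t}$ from the previous lemma is compatible with the chosen relative spin structures (the pullback relative spin structure on $\til{L}_1,\til{L}_2$), so that the induced orientations on the zero-dimensional moduli spaces agree and $(-1)^{\mathrm{sign}(\til u)}=(-1)^{\mathrm{sign}(u)}$. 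Once that is noted, convergence in $\Lambda_{nov}$ on the upstairs side follows from Gromov compactness exactly as downstairs (equivalently, is transported across the bijection), and the proof concludes.
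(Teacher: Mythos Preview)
Your proposal is correct and follows essentially the same approach as the paper: use Lemma~\ref{lem:intersection_points_correspondence}(a) to get the bijection on generators, then verify $\pi_M$ is a chain map by matching the sums defining $m_1$ term by term via parts (b), (c) and the Kuranishi-isomorphism lemma, noting that signs agree because $(\pi_M)_*$ is orientation preserving. The paper's write-up is slightly terser (it does not spell out the area-matching or Gromov compactness remarks), but the logical content is the same.
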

\begin{proof}
By Part b) of Lemma \ref{lem:intersection_points_correspondence}, the projection $\pi_M:\til{M}\to M$ gives an identification between the Floer complexes:
$$\pi_M:CF(\til{L}_1,\til{L}_2)\to CF(\mathbb{L}_1,\mathbb{L}_2).$$
It suffices to prove that $\pi_M$ is a chain map. By Lemma \ref{lem:intersection_points_correspondence}, for each $p\in\xi_1(L_1)\cap\xi_2(L_2)$, there exists a unique $\widetilde{p}\in\widetilde{L}_1\cap\widetilde{L}_2$ such that $\pi_M(\widetilde{p})=p$. Furthermore, for each $q\in\xi_1(L_1)\cap\xi_2(L_2)$ and $u\in\mathcal{M}^0(p,q;\beta)\neq\phi$, there exists a unique $\widetilde{q}\in\widetilde{L}_1\cap\widetilde{L}_2$ with $\pi_M(\widetilde{q})=q$ and a unique $\widetilde{u}\in\mathcal{M}^0(\widetilde{p},\widetilde{q};\til{\beta})$ such that $\pi_M\circ\widetilde{u}=u$. Hence we have
\begin{align*}
m_1(\pi_M(\widetilde{p})) & = \sum_{q\in\xi_1(L_1)\cap\xi_2(L_2)}\sum_{\beta:\mu(\beta)=1}\sum_{u\in\mathcal{M}^0(p,q,\beta)}(-1)^{\text{sign}(u)}T^{\omega(u)}\cdot q\\
& = \sum_{\widetilde{q}\in\widetilde{L}_1\cap\widetilde{L}_2}\sum_{\til{\beta}:\mu(\til{\beta})=1}\sum_{\widetilde{u}\in\mathcal{M}^0(\widetilde{q},\widetilde{p},\til{\beta})}(-1)^{\text{sign}(\widetilde{u})}T^{\widetilde{\omega}(\widetilde{u})}\cdot \pi_M(\widetilde{q}).
\end{align*}
The last summation is exactly $\pi_M(m_1(\widetilde{p}))$. Note that $\text{sign}(u) = \text{sign}(\widetilde{u})$ here because $(\pi_M)_*$ is an isomorphism between oriented Kuranishi spaces by the previous lemma.
\end{proof}

\begin{theorem}\label{thm:inv_thm}
Let $\bb{L}_1,\bb{L}_2$ be Lagrangian immersions in $(M,\omega)$. The Floer cohomology $HF(\bb{L}_1,\bb{L}_2)$ is invariant under lifted Hamiltonian isotopy. That is, if $\bb{L}_2$ is lifted Hamiltonian isotopic to $\bb{L}_2'$, then there is a quasi-isomorphism
$$(CF(\mathbb{L}_1,\mathbb{L}_2),m_1)\simeq(CF(\mathbb{L}_1,\mathbb{L}_2'),m_1).$$
\end{theorem}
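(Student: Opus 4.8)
The plan is to leverage two facts that are already in hand: the chain-level isomorphism $(CF(\til{L}_1,\til{L}_2),m_1)\cong(CF(\bb{L}_1,\bb{L}_2),m_1)$ of Proposition \ref{thm:chain_isom}, which moves the Floer complex up to a finite unramified cover, and the Akaho--Joyce theorem recalled above, that $HF$ is invariant under \emph{global} Hamiltonian equivalence. Since, by Definition \ref{def:lifted_Ham}, a lifted Hamiltonian isotopy is a finite chain of $(\til{M}_j,\pi_j)$-lifted Hamiltonian isotopies, and a composite of chain isomorphisms and quasi-isomorphisms is again a quasi-isomorphism, the first step is to reduce, by induction on the length $l$ of the chain, to the case in which $\bb{L}_2$ is $(\til{M},\pi)$-lifted Hamiltonian isotopic to $\bb{L}_2'$ for a single finite unramified covering $\pi:\til{M}\to M$.

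In that case I would identify the domains of $\bb{L}_2,\bb{L}_2'$ via the diffeomorphism of Definition \ref{def:M_lifted_Ham}, so that $\bb{L}_2=(L,\xi_2)$, $\bb{L}_2'=(L,\xi_2')$, and take lifts $\til{\xi}_2,\til{\xi}_2':L\to\til{M}$ with $\xi_2=\pi\circ\til{\xi}_2$, $\xi_2'=\pi\circ\til{\xi}_2'$ for which $\til{\bb{L}}_2:=(L,\til{\xi}_2)$ and $\til{\bb{L}}_2':=(L,\til{\xi}_2')$ are globally Hamiltonian isotopic in $(\til{M},\pi^*\omega)$. Then I would run the fiber-product construction preceding Lemma \ref{lem:intersection_points_correspondence} with the \emph{trivial} cover $\til{M}_1=M$ (so $\til{\xi}_1=\xi_1$) together with $\til{M}_2=\til{M}$; here $\til{M}_1\times_M\til{M}_2=\til{M}$, the role of $\til{L}_1$ is played by the image of the pullback Lagrangian immersion $\bb{L}_1^{\til{M}}:=(L_1\times_M\til{M},\ \xi_1\times id)$ of $\til{M}$, while $\til{L}_2=\til{\xi}_2(L)$. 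Proposition \ref{thm:chain_isom} then yields canonical isomorphisms of Floer complexes $(CF(\bb{L}_1^{\til{M}},\til{\bb{L}}_2),m_1)\cong(CF(\bb{L}_1,\bb{L}_2),m_1)$ and $(CF(\bb{L}_1^{\til{M}},\til{\bb{L}}_2'),m_1)\cong(CF(\bb{L}_1,\bb{L}_2'),m_1)$ (note that $\bb{L}_1^{\til{M}}$ is unchanged when $\bb{L}_2$ is replaced by $\bb{L}_2'$), which in particular show that both pairs upstairs in $\til{M}$ are unobstructed.

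Finally I would compare the two complexes on $\til{M}$. Since $\til{\bb{L}}_2$ and $\til{\bb{L}}_2'$ are globally Hamiltonian isotopic in $(\til{M},\pi^*\omega)$ and both relevant pairs are unobstructed, the global Hamiltonian invariance theorem of Akaho--Joyce applies in $\til{M}$ and gives a quasi-isomorphism $(CF(\bb{L}_1^{\til{M}},\til{\bb{L}}_2),m_1)\simeq(CF(\bb{L}_1^{\til{M}},\til{\bb{L}}_2'),m_1)$. Concatenating the chain isomorphism above, this quasi-isomorphism, and the other chain isomorphism produces the desired quasi-isomorphism $(CF(\bb{L}_1,\bb{L}_2),m_1)\simeq(CF(\bb{L}_1,\bb{L}_2'),m_1)$, and feeding this into the induction on $l$ closes the argument.

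I expect the main obstacle to be not a deep one but a matter of checking that the cover-to-base machinery genuinely applies in the generality needed: that the constructions of Lemma \ref{lem:intersection_points_correspondence} and Proposition \ref{thm:chain_isom} remain valid when one of the two covers is trivial and the covers and domains may be disconnected; that Assumption \ref{assumption:A} is inherited upstairs, i.e.\ that self-intersections of $\bb{L}_1^{\til{M}}$ project exactly to self-intersections of $\bb{L}_1$ (so that the intersection points of $\til{\bb{L}}_2$, $\til{\bb{L}}_2'$ with $\bb{L}_1^{\til{M}}$ are again finite and away from self-intersections); and that unobstructedness, relative spin structures and the orientations of the Kuranishi moduli spaces all pull back correctly along $\pi$ — all of which hold because $\pi$ is a local symplectomorphism, exactly as exploited in Lemma \ref{lem:intersection_points_correspondence}. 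The only genuinely non-formal input is the Akaho--Joyce global invariance theorem, which I would use as a black box, simply applied in $\til{M}$ instead of in $M$.
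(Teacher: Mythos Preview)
Your proposal is correct and follows exactly the paper's own argument: reduce by induction to a single $(\til{M},\pi)$-step, take the trivial cover on the $\bb{L}_1$ side so that $\til{M}_1\times_M\til{M}_2\cong\til{M}$, apply Proposition~\ref{thm:chain_isom} to identify the Floer complexes upstairs and downstairs, and then invoke Akaho--Joyce global Hamiltonian invariance on $\til{M}$. The additional sanity checks you list (inheritance of Assumption~\ref{assumption:A}, unobstructedness, orientations) are the right ones but are not elaborated in the paper either.
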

\begin{proof}
It suffices to prove the theorem in the case when $\bb{L}_2$ is $(\til{M}_2,\pi_2)$-lifted Hamiltonian isotopic to $\bb{L}_2'$ for some finite unramified covering $\pi_2:\til{M}_2\to M$. Suppose $\bb{L}_2$ is $(\til{M}_2,\pi_2)$-lifted Hamiltonian isotopic to $\bb{L}_2'$. By definition, there exist liftings $\til{\xi}_2:L_2\to\til{M}_2$ and $\til{\xi}_2':L_2\to\til{M}_2$ such that $(L_2,\til{\xi}_2)$ is globally Hamiltonian equivalent to $(L_2,\til{\xi}_2')$. In this case, $\til{M}=M\times_M\til{M}_2\cong\til{M}_2$, so we have a quasi-isomorphism
$$(CF(\til{L}_1,\til{L}_2),m_1)\simeq(CF(\til{L}_1,\til{L}_2'),m_1).$$
Together with the isomorphism obtained in Proposition \ref{thm:chain_isom}, we have the quasi-isomorphism
$$(CF(\mathbb{L}_1,\mathbb{L}_2),m_1)\simeq(CF(\mathbb{L}_1,\mathbb{L}_2'),m_1).$$
This completes the proof.
\end{proof}

Theorem \ref{thm:inv_thm} shows that lifted Hamiltonian equivalence defines an equivalence relation on objects of the immersed Fukaya category of $(M,\omega)$, thus giving an answer to \cite[Question 13.15]{AJ} which asked for a restricted class of local Hamiltonian equivalences under which the immersed Lagrangian Floer cohomology is invariant.

In the context of mirror symmetry, one needs to complexify the Fukaya category by unitary local systems on the domain of the Lagrangian immersion. In this case, the differential $m_1$ on $CF((\mathbb{L}_1,\mathcal{L}_1),(\mathbb{L}_2,\mathcal{L}_2))$ should be coupled with the holonomy coming from the local systems $\mathcal{L}_1,\mathcal{L}_2$ on the boundary of the disks. The notion of lifted Hamiltonian isotopy can be generalized as follows

\begin{definition}
Let $\mathbb{L}_1 = (L_1, \xi_1), \mathbb{L}_2 = (L_2, \xi_2)$ be two Lagrangian immersions of $M$ and $\mathcal{L}_1, \mathcal{L}_2$ be local systems on $L_1, L_2$ respectively. Let $\pi:\til{M}\to M$ be a finite unramified covering of $M$. We say $(\mathbb{L}_1,\mathcal{L}_1)$ is {\em $(\til{M},\pi)$-lifted Hamiltonian isotopic} to $(\mathbb{L}_2,\mathcal{L}_2)$ if
\begin{itemize}
\item[(a)]
There exist a diffeomorphism $\phi:L_1\to L_2$ and liftings $\til{\xi}_1:L_1\to\til{M}$, $\til{\xi}_2:L_2\to\til{M}$ such that
$(L_1,\til{\xi}_1)$ is globally Hamiltonian isotopic to $(L_1,\til{\xi}_2\circ\phi)$ and
\item[(b)]
$\phi^*\cu{L}_2\cong\cu{L}_1$ as unitary bundles.
\end{itemize}
\end{definition}

With a slight modification, one can also prove the invariance of the Floer cohomology $HF((\mathbb{L}_1,\mathcal{L}_1),(\mathbb{L}_2,\mathcal{L}_2))$ under this generalized notion of lifted Hamiltonian isotopy for any pair of immersed Lagrangian branes $(\mathbb{L}_1,\mathcal{L}_1),(\mathbb{L}_2,\mathcal{L}_2)$. We omit the detailed proof.

\section{Mirror of isomorphism between holomorphic vector bundles}\label{sec:mirror_analog_isom}

Let us go back to the mirror symmetry between $X$ and $\check{X}$. In this section, we will prove, at least in the semi-flat and caustics-free case, that certain lifted Hamiltonian equivalence between immersed Lagrangian multi-sections of the fibration $X\to B$ is mirror to isomorphism between holomorphic vector bundles over the mirror $\check{X}$.

Let $\bb{L}=(L,\xi,c_r)$ be an immersed Lagrangian multi-section of $X\to B$. 
Recall that $c_r:L\to B$ is a finite unramified covering, and the projection $\pi_X:L\times_BX\to X$ is also a finite unramified covering of $X$. A deck transformation $\tau_L\in \text{Deck}(L/B)$ induces a deck transformation $\tau\in \text{Deck}(L\times_BX/X)$ by
$$\tau:(l,x,y)\mapsto(\tau_L(l),x,y).$$
Hence we get an injective group homomorphism $\text{Deck}(L/B) \to \text{Deck}(L \times_B X / X)$. Let $G$ be the image of this homomorphism. With respect to the pullback symplectic structure on $L\times_BX$, elements in $G$ are symplectomorphisms.

On the mirror side, we also have a finite unramified covering $\pi_{\check{X}}:L\times_B\check{X}\to\check{X}$. One can apply a similar construction to obtain an embedding $\text{Deck}(L/B)\hookrightarrow \text{Deck}(L \times_B \check{X} / \check{X})$. Denote the image by $\check{G}$. With respect to the pullback complex structure, $\check{G}$ is a subgroup of the group of biholomorphisms of $L\times_B\check{X}$. There is a natural bijection between $G$ and $\check{G}$ given by $G \cong \text{Deck}(L/B) \cong \check{G}$.

\begin{lemma}
If $L$ is connected and $\text{Deck}(L/B)$ acts transitively on fibers of $c_r:L\to B$, then $G = \text{Deck}(L \times_B X / X)$ and $\check{G} = \text{Deck}(L \times_B \check{X} / \check{X})$.
\end{lemma}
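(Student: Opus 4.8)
The plan is to deduce both equalities from one elementary observation: if $p:E\to Y$ is a \emph{connected} covering of degree $r$ and $H\le\text{Deck}(E/Y)$ is a subgroup that acts transitively on some fibre of $p$, then $H=\text{Deck}(E/Y)$ (and $p$ is automatically a regular covering). Granting this, it suffices to check two things: that $L\times_B X$ is connected, and that $G$ acts transitively on the fibres of $\pi_X$. The statement for $\check{G}$ and $\pi_{\check{X}}$ will then follow by the identical argument with $X$ replaced by $\check{X}$.

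First I would dispose of the elementary observation and the connectedness. For the observation: a nontrivial deck transformation of a connected covering has no fixed points (two lifts of $p$ agreeing at a point coincide), so $\text{Deck}(E/Y)$, and a fortiori $H$, acts freely on each fibre; since a fibre has $r$ points this gives $|H|\le|\text{Deck}(E/Y)|\le r$, while a free transitive action of $H$ on $r$ points forces $|H|\ge r$, so all three numbers equal $r$. For connectedness: projection to the first factor exhibits $L\times_B X$ as the total space of the torus bundle $L\times_B X\to L$ obtained by pulling back $\pi_B:X\to B$ along $c_r$; since $L$ is connected and the fibres are connected tori, $L\times_B X$ is connected (one could instead invoke the zero section $B\hookrightarrow X$). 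Observe also that $\tau_L\mapsto\tau_L\times\mathrm{id}_X$ is injective, so $G\cong\text{Deck}(L/B)$ is a group of order $r$.

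It then remains to identify the fibres. For $x\in X$ the assignment $(l,x)\mapsto l$ identifies $\pi_X^{-1}(x)=\{(l,x):c_r(l)=\pi_B(x)\}$ with $c_r^{-1}(\pi_B(x))$, a set of $r$ points, and under this identification an element $\tau_L\times\mathrm{id}_X\in G$ acts exactly as $\tau_L\in\text{Deck}(L/B)$ acts on $c_r^{-1}(\pi_B(x))$. By hypothesis $\text{Deck}(L/B)$ acts transitively on $c_r^{-1}(\pi_B(x))$, hence so does $G$ on $\pi_X^{-1}(x)$, and the elementary observation gives $G=\text{Deck}(L\times_B X/X)$. Running the same argument with $\check{X}$ in place of $X$ yields $\check{G}=\text{Deck}(L\times_B\check{X}/\check{X})$.

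I do not expect a real obstacle here; the only point that needs genuine care is the connectedness of $L\times_B X$ and of $L\times_B\check{X}$, since both the inequality $|\text{Deck}|\le\deg$ and the fixed-point-freeness of deck transformations used above rely on the total space being connected — and it is precisely the hypothesis that $L$ (not merely $B$) is connected that secures this.
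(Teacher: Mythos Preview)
Your proof is correct and follows essentially the same approach as the paper: both argue that $L\times_B X$ is connected, identify the fibre $\pi_X^{-1}(x)$ with $c_r^{-1}(\pi_B(x))$ so that $G$ inherits transitivity from $\text{Deck}(L/B)$, and then use the free-plus-transitive counting argument to force $G=\text{Deck}(L\times_B X/X)$. Your version is in fact a bit more explicit --- you spell out the connectedness of $L\times_B X$ via the pulled-back torus bundle and package the counting step as a clean general observation --- whereas the paper simply asserts connectedness and runs the same free/transitive comparison directly.
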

\begin{proof}
Since $L$ is connected, $L\times_BX$ is also connected, and so $\text{Deck}(L\times_BX/X)$ acts freely on the fiber of $\pi_X:L\times_BX\to X$. Hence $G$ also acts freely on fibers of $\pi_X$. Fix $(x,y)\in X$. The fiber of $\pi_X$ over $(x,y)$ is in bijection with the fiber of $c_r$ over $x\in B$. By assumption, $\text{Deck}(L/B)$ acts transitively on the fiber of $c_r$. Hence $G$ also acts transitively on the fiber of $\pi_X$. Therefore, $\text{Deck}(L\times_BX/X)$ also acts transitively on fibers of $\pi_X$. Since both $G$ and $\text{Deck}(L\times_BX/X)$ act transitively and freely on fibers, we must have $G = \text{Deck}(L\times_BX/X)$.
\end{proof}

\begin{remark}
The transitivity of the action of $\text{Deck}(L/B)$ on fibers of $c_r:L\to B$ is equivalent to the normality of $(c_r)_*(\pi_1(L))$ as a subgroup of $\pi_1(B)$.
\end{remark}

It is known by \cite{sections_line_bundles} that when $B$ is compact, the Lagrangian sections $L_1, L_2$ are (globally) Hamiltonian equivalent if and only if their SYZ mirrors $\check{\bb{L}}_1, \check{\bb{L}}_2$ are isomorphic as holomorphic line bundles. In the higher rank situation, the following theorem shows, at least with a transitivity assumption, that $(L\times_BX,\pi_X)$-lifted Hamiltonian equivalence is the mirror analog of isomorphism between holomorphic vector bundles.

\begin{theorem}\label{thm:isom_SYZ_mirror}
Suppose that $B$ is compact. Let $\mathbb{L}_1 = (L, \xi_1, c_r)$, $\mathbb{L}_2 = (L, \xi_2, c_r)$ be immersed Lagrangian multi-sections of $X\to B$ with the same connected domain $L$ and unramified covering map $c_r:L\to B$. Assume that $\text{Deck}(L/B)$ acts transitively on fibers of $c_r:L\to B$. Then $\mathbb{L}_1$ is $(L \times_B X, \pi_X)$-lifted Hamiltonian isotopic to $\mathbb{L}_2$ if and only if $\check{\mathbb{L}}_1$ is isomorphic to $\check{\mathbb{L}}_2$ as holomorphic vector bundles.
\end{theorem}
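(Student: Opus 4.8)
The plan is to lift everything along the finite unramified covering $\pi_X:L\times_BX\to X$, where the immersed multi-sections become honest Lagrangian \emph{sections}, apply the section-versus-line-bundle correspondence of \cite{sections_line_bundles} on the fibration $L\times_BX\to L$, and then descend to $\check{X}$ using the Galois covering $\pi_{\check{X}}:L\times_B\check{X}\to\check{X}$ together with the Krull--Schmidt property of holomorphic vector bundles on the compact complex manifold $L\times_B\check{X}$.

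First I would set up the reduction. Since $c_r:L\to B$ pulls back the integral affine structure of $B$, the fibration $L\times_BX\to L$ is again a semi-flat Lagrangian torus fibration, now over the \emph{compact} base $L$ (compact because $B$ is compact and $c_r$ is a finite covering), with SYZ mirror $L\times_B\check{X}\to L$; thus \cite{sections_line_bundles} applies to it. For $i=1,2$ the canonical lift $\til{\xi}_i:L\to L\times_BX$, $l\mapsto(l,\xi_i(l))$, is a Lagrangian section of $L\times_BX\to L$ (as $\til{\xi}_i^*\pi_X^*\omega=\xi_i^*\omega=0$), and its SYZ transform is a holomorphic line bundle $\check{M}_i$ on $L\times_B\check{X}$ with $(\pi_{\check{X}})_*\check{M}_i=\check{\mathbb{L}}_i$. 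Because $L$ is connected, every lift of $\xi_i$ along $\pi_X$ is of the form $l\mapsto(\tau_L(l),\xi_i(l))$ for some $\tau_L\in\text{Deck}(L/B)$, so its image is $\sigma(\til{\xi}_i(L))$ for the corresponding $\sigma\in G$; since elements of $G$ are symplectomorphisms of $L\times_BX$ and conjugate Hamiltonian isotopies to Hamiltonian isotopies, unwinding Definition \ref{def:M_lifted_Ham} shows that $\mathbb{L}_1$ is $(L\times_BX,\pi_X)$-lifted Hamiltonian isotopic to $\mathbb{L}_2$ if and only if there is $\tau_L\in\text{Deck}(L/B)$ such that the Lagrangian sections $\text{graph}(\xi_1)$ and $\text{graph}(\xi_2\circ\tau_L)$ of $L\times_BX\to L$ are globally Hamiltonian isotopic (using that for embedded Lagrangians Definition \ref{def:global_Ham} is the usual notion of Hamiltonian isotopy).

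Next I would feed this into the line-bundle correspondence and use naturality of the transform. By \cite{sections_line_bundles} the displayed condition is equivalent to: there is $\tau_L$ with $\check{M}_1\cong\text{SYZ}(\text{graph}(\xi_2\circ\tau_L))$ as holomorphic line bundles on $L\times_B\check{X}$. Since the Poincar\'e bundle on $(L\times_BX)\times_L(L\times_B\check{X})$ is pulled back from $X\times_B\check{X}$, the semi-flat SYZ transform intertwines the action of $\text{Deck}(L/B)$ on $L\times_BX$ with its action $\check{G}$ on $L\times_B\check{X}$; hence $\{\text{SYZ}(\text{graph}(\xi_2\circ\tau_L)):\tau_L\in\text{Deck}(L/B)\}=\{\check{\sigma}^*\check{M}_2:\check{\sigma}\in\check{G}\}$, and the condition becomes: $\check{M}_1\cong\check{\sigma}^*\check{M}_2$ for some $\check{\sigma}\in\check{G}$. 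Now recall from the lemma preceding the theorem that, under the transitivity hypothesis, $\pi_{\check{X}}:L\times_B\check{X}\to\check{X}$ is Galois with group $\check{G}$. The forward implication is then immediate: $\check{M}_1\cong\check{\sigma}^*\check{M}_2$ gives $\check{\mathbb{L}}_1=(\pi_{\check{X}})_*\check{M}_1\cong(\pi_{\check{X}})_*\check{\sigma}^*\check{M}_2=(\pi_{\check{X}})_*\check{M}_2=\check{\mathbb{L}}_2$, since $\pi_{\check{X}}\circ\check{\sigma}=\pi_{\check{X}}$.

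For the converse I would pull $\check{\mathbb{L}}_1\cong\check{\mathbb{L}}_2$ back along $\pi_{\check{X}}$ and use the Galois identity $\pi_{\check{X}}^*(\pi_{\check{X}})_*\check{M}\cong\bigoplus_{\check{\sigma}\in\check{G}}\check{\sigma}^*\check{M}$ to get $\bigoplus_{\check{\sigma}}\check{\sigma}^*\check{M}_1\cong\bigoplus_{\check{\sigma}}\check{\sigma}^*\check{M}_2$ on $L\times_B\check{X}$; as $L\times_B\check{X}$ is a compact complex manifold, the category of holomorphic vector bundles on it is Krull--Schmidt (Atiyah), and since all summands are line bundles, hence indecomposable, I may cancel to conclude $\check{M}_1\cong\check{\sigma}^*\check{M}_2$ for some $\check{\sigma}\in\check{G}$, which closes the loop. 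The main obstacle is exactly this converse step: extracting a \emph{single} deck transformation $\check{\sigma}$ from an abstract isomorphism of pushforwards, which is precisely where both the transitivity hypothesis (making $\pi_{\check{X}}$ Galois) and the Krull--Schmidt property are used in an essential way; the remaining point needing care, though ultimately routine, is the $\text{Deck}(L/B)$-equivariance of the semi-flat SYZ transform.
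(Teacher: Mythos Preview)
Your proposal is correct and follows essentially the same architecture as the paper: lift to the Galois cover $L\times_BX\to X$ where the multi-sections become honest Lagrangian sections, invoke the section/line-bundle correspondence of \cite{sections_line_bundles} on the fibration over $L$, and descend via $(\pi_{\check{X}})_*$ using the decomposition $\pi_{\check{X}}^*(\pi_{\check{X}})_*\check{M}\cong\bigoplus_{\check{\sigma}\in\check{G}}\check{\sigma}^*\check{M}$.

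The only substantive difference is in the converse step, where you must extract a single $\check{\sigma}$ with $\check{M}_1\cong\check{\sigma}^*\check{M}_2$ from $\bigoplus_{\check{\sigma}}\check{\sigma}^*\check{M}_1\cong\bigoplus_{\check{\sigma}}\check{\sigma}^*\check{M}_2$. You invoke Atiyah's Krull--Schmidt theorem for holomorphic bundles on a compact complex manifold; the paper instead proves this special case by hand: pick nonzero projections $\check{M}_1\to\check{\tau}_1^*\check{M}_2$ and $\check{M}_2\to\check{\tau}_2^*\check{M}_1$, iterate to get a nonzero endomorphism of $\check{M}_1$, and use that $\mathrm{End}(\check{M}_1)\cong\bb{C}$ on the compact manifold $L\times_B\check{X}$ to conclude the first projection was already an isomorphism. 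Your appeal to Krull--Schmidt is cleaner and more conceptual; the paper's argument is more elementary and self-contained, avoiding the citation. Both are valid, and the equivariance of the SYZ transform under $\text{Deck}(L/B)$ that you flag as ``needing care'' is exactly what the paper uses (implicitly) as well.
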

\begin{proof}
Given two immersed Lagrangian multi-sections $\bb{L}_1 = (L, \xi_1, c_r)$ and $\bb{L}_2 = (L, \xi_2, c_r)$ with the same connected domain $L$ and finite unramified covering map $c_r:L\to B$, we can lift $\bb{L}_j$, $j=1,2$ to Lagrangian embeddings $\til{\xi}_j:L\to\til{L}_j\subset L\times_BX$, explicitly given by
$$\til{\xi}_j:l\mapsto(l,x,\xi_j(l)).$$
Moreover, for any $\tau_L\in \text{Deck}(L/B)$ and section $\til{\xi}:L\to\til{L}\subset L\times_BX$ of the fibration $\pi_L:L\times_BX\to L$, the composition
$$\tau\circ\til{\xi}\circ\tau_L^{-1}:l\mapsto(l,x,\til{\xi}(\tau_L^{-1}(l)))$$
defines a section of $\pi_L$.

Suppose $\mathbb{L}_1$ is $(L\times_BX,\pi_X)$-lifted Hamiltonian isotopic to $\mathbb{L}_2$. Then there exist liftings $\til{\xi}_1',\til{\xi}_2':L\to L\times_BX$ such that $(L,\til{\xi}_1')$ and $(L,\til{\xi}_2')$ are globally Hamiltonian isotopic to each other. Since both $\til{\xi}_1,\til{\xi}_1'$ are liftings of $\xi_1$, by the transitivity assumption, there exists $\tau_1\in \text{Deck}(L\times_BX/X)$ such that
$$\til{\xi}_1'=\tau_1\circ\til{\xi}_1.$$
Similarly, there exists $\tau_2$ such that
$$\til{\xi}_2'=\tau_2\circ\til{\xi}_2.$$
In particular, $(L,\til{\xi}_1')$, $(L,\til{\xi}_2')$ are embedded Lagrangian submanifolds, and so $\tau_1\circ\til{\xi}_1\circ\tau_{L,1}^{-1}$ and $\tau_2\circ\til{\xi}_2\circ\tau_{L,2}^{-1}$ are globally Hamiltonian equivalent Lagrangian sections of $\pi_L$.

Let $\check{\bb{L}}_1$, $\check{\bb{L}}_2$ be the SYZ mirror line bundles of $(L,\til{\xi}_1)$ and $(L,\til{\xi}_2)$ respectively. Then the correspondence result of \cite{sections_line_bundles} gives an isomorphism $(\check{\tau}_1^{-1})^*\check{\bb{L}}_1\cong(\check{\tau}_2^{-1})^*\check{\bb{L}}_2$ as holomorphic line bundles, where $\check{\tau}_j\in\check{G}$ corresponds to $\tau_j\in G$ under the natural isomorphism $G\cong \text{Deck}(L/B)\cong\check{G}$. We have
$$\check{\mathbb{L}}_j(U)=((\pi_{\check{X}})_*\check{L_j})(U)=\check{L_j}(\pi_{\check{X}}^{-1}(U)),\quad j=1,2.$$
For $j=1,2$, we have
$$((\pi_{\check{X}})_*(\check{\tau}_j^{-1})^*\check{\bb{L}}_j)(U)=\check{\bb{L}}_j(\check{\tau}_j(\pi_{\check{X}}^{-1}(U)))=\check{\bb{L}}_j((\pi_{\check{X}}\circ\check{\tau}_j^{-1})^{-1}(U))=(\pi_{\check{X}})_*\check{\bb{L}}_j(U),$$
so that
$$\check{\mathbb{L}}_2=(\pi_{\check{X}})_*\check{\bb{L}}_2\cong(\pi_{\check{X}})_*(\check{\tau}_2^{-1})^*\check{\bb{L}}_2\cong(\pi_{\check{X}})_*(\check{\tau}_1^{-1})^*\check{\bb{L}}_1=(\pi_{\check{X}})_*\check{\bb{L}}_1=\check{\mathbb{L}}_1.$$

Conversely, suppose $\check{\mathbb{L}}_1\cong\check{\mathbb{L}}_2$. By the correspondence result of \cite{sections_line_bundles} again, it suffices to show that $\check{\bb{L}}_1\cong\check{\tau}^*\check{\bb{L}}_2$ for some $\check{\tau}\in\check{G}$. Note that since $L\times_BX$ is connected, we have the following decompositions:
$$\pi_{\check{X}}^*(\pi_{\check{X}})_*\check{\bb{L}}_j=\bigoplus_{\check{\tau}\in\check{G}}\check{\tau}^*\check{\bb{L}}_j,\quad j=1,2.$$
Hence
$$\bigoplus_{\check{\tau}\in\check{G}}\check{\tau}^*\check{\bb{L}}_1\cong\bigoplus_{\check{\tau}\in\check{G}}\check{\tau}^*\check{\bb{L}}_2.$$
In particular, $\check{\bb{L}}_1$ defines a subbundle of $\bigoplus_{\check{\tau}\in\check{G}}\check{\tau}^*\check{\bb{L}}_2$.

Since $\check{\bb{L}}_1$ is a subbundle, there exists $\check{\tau}_1\in\check{G}$ such that the composition
$$\check{\bb{L}}_1\hookrightarrow\bigoplus_{\check{\tau}\in\check{G}}\check{\tau}^*\check{\bb{L}}_2\to\check{\tau}_1^*\check{\bb{L}}_2$$
is not identically zero. Similarly, there exists $\check{\tau}_2\in\check{G}$ such that the composition $\check{\bb{L}}_2\hookrightarrow\bigoplus_{\check{\tau}\in\check{G}}\check{\tau}^*\check{\bb{L}}_1\to\check{\tau}_2^*\check{\bb{L}}_1$ is not identically zero. Therefore we obtain a chain of bundle maps
$$\check{\bb{L}}_1\to\check{\tau}_1^*\check{\bb{L}}_2\to\check{\tau}_1^*\check{\tau}_2^*\check{\bb{L}}_1\to\cdots\to(\check{\tau}_1^*\check{\tau}_2^*)^k\check{\bb{L}}_1,\quad k\geq 1,$$
each of which is not identically zero.

Take $k$ to be the order of $\check{\tau}_2\circ\check{\tau}_1$. Then we obtain a map $\check{\bb{L}}_1\to\check{\bb{L}}_1$, which is again, not identically zero. Since $B$ is compact, so is $\check{X}$. Hence $\check{\bb{L}}_1\to\check{\bb{L}}_1$ corresponds to a nonzero holomorphic function which can only be a nonzero constant. Therefore, $\check{\bb{L}}_1\to\check{\bb{L}}_1$ is an isomorphism and in particular, $\check{\bb{L}}_1\to\check{\tau}_1^*\check{\bb{L}}_2$ is injective. Since $\check{\bb{L}}_1$ and $\check{\tau}_1^*\check{\bb{L}}_2$ are line bundles, $\check{\bb{L}}_1\to\check{\tau}_1^*\check{\bb{L}}_2$ is an isomorphism.
\end{proof}

If we combine Theorem \ref{thm:isom_SYZ_mirror} with the surgery-extension correspondence theorem (Theorem \ref{thm:surgery_extension}), we obtain

\begin{corollary}\label{cor:torus}
Let $\bb{L}_1 = \bb{L}_{r_1,d_1}[c_1]$ and $\bb{L}_2 = \bb{L}_{r_2,d_2}[c_2]$. If $K, K'\subset \bb{L}_1\cap \bb{L}_2$ are sets of intersection points such that the Lagrangian surgeries $\bb{L}_K= \bb{L}_2\sharp_{K}\bb{L}_1$ and $\bb{L}_{K'}= \bb{L}_2\sharp_{K'}\bb{L}_1$ have connected domain and satisfy the gcd assumption $\text{gcd}(r_1 + r_2, d_1 + d_2) = 1$, then $\bb{L}_K$ and $\bb{L}_{K'}$ are $(S^1\times_{S^1}T^2,\pi_{T^2})$-lifted Hamiltonian isotopic to each other, and hence have isomorphic immersed Lagrangian Floer cohomologies.
\end{corollary}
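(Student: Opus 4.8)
The plan is to deduce the corollary by assembling three earlier results: the area computation of Lemma~\ref{lem:area}, the mirror-of-isomorphism theorem (Theorem~\ref{thm:isom_SYZ_mirror}), and the invariance theorem (Theorem~\ref{thm:inv_thm}). First I would set up the hypotheses of Theorem~\ref{thm:isom_SYZ_mirror}. Both $\bb{L}_K=\bb{L}_2\sharp_{K}\bb{L}_1$ and $\bb{L}_{K'}=\bb{L}_2\sharp_{K'}\bb{L}_1$ are graded immersed Lagrangian multi-sections of $\pi_1:T^2\to S^1$ of rank $r:=r_1+r_2$ with connected domain; since a connected $r$-fold covering of $S^1$ is again $S^1$ with the standard covering map $c_r:z\mapsto z^r$, we may arrange that $\bb{L}_K$ and $\bb{L}_{K'}$ share the same domain $L=S^1$ and the same covering map $c_r:L\to B=S^1$. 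The deck group $\mathrm{Deck}(L/B)\cong\mathbb{Z}/r$ acts freely and transitively on the fibers of $c_r$, so the transitivity hypothesis of Theorem~\ref{thm:isom_SYZ_mirror} holds; with $X=T^2$ and $B=S^1$ one has $L\times_BX=S^1\times_{S^1}T^2$ and $\pi_X=\pi_{T^2}$.

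By Theorem~\ref{thm:isom_SYZ_mirror} it then suffices to show $\check{\bb{L}}_K\cong\check{\bb{L}}_{K'}$ as holomorphic vector bundles over $\check X$ (equipping $\bb{L}_K$ and $\bb{L}_{K'}$, say, with the trivial local system; any common local system $\cu{L}_b$ works equally well). Here I would argue as in the introduction. Both $\check{\bb{L}}_K$ and $\check{\bb{L}}_{K'}$ have rank $r_1+r_2$ and degree $-(d_1+d_2)$, and the gcd assumption $\gcd(r_1+r_2,d_1+d_2)=1$ together with connectedness of the domains makes them \emph{stable}, hence indecomposable, exactly as in the proof of Theorem~\ref{thm:surgery_extension}. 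To see that their determinants agree, let $\varphi_K,\varphi_{K'}:\mathbb{R}\to\mathbb{R}$ be the defining equations. Lemma~\ref{lem:area} shows that $\int_0^{r_1+r_2}\varphi_K$ and $\int_0^{r_1+r_2}\varphi_{K'}$ differ by an integer $N$, so by the transition-function formulas of Section~\ref{sec:surgery_extension} the factor of automorphy of $\det(\check{\bb{L}}_K)\otimes\det(\check{\bb{L}}_{K'})^{-1}$ is the constant $e^{-2\pi N}$, which is gauge-equivalent to $1$ by the computation in the proof of Proposition~\ref{prop:same_det} (Theorem~4.11 of \cite{factor_of_automorphy}). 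Hence $\det(\check{\bb{L}}_K)\cong\det(\check{\bb{L}}_{K'})$, and since $\gcd(r_1+r_2,d_1+d_2)=1$, Atiyah's classification \cite{Atiyah_vector_bundle_over_an_elliptic_curve} forces $\check{\bb{L}}_K\cong\check{\bb{L}}_{K'}$.

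Putting these two steps together, Theorem~\ref{thm:isom_SYZ_mirror} gives that $\bb{L}_K$ is $(S^1\times_{S^1}T^2,\pi_{T^2})$-lifted Hamiltonian isotopic to $\bb{L}_{K'}$, which is the first assertion. For the second, I would invoke Theorem~\ref{thm:inv_thm} (the immersions involved being unobstructed, as graded curves on $T^2$): for every immersed Lagrangian $\bb{L}$ one obtains a quasi-isomorphism $(CF(\bb{L},\bb{L}_K),m_1)\simeq(CF(\bb{L},\bb{L}_{K'}),m_1)$, and combining this with the analogous invariance in the first argument one concludes $HF(\bb{L}_K,\bb{L}_K)\cong HF(\bb{L}_{K'},\bb{L}_{K'})$; that is, the immersed Lagrangian Floer cohomologies of $\bb{L}_K$ and $\bb{L}_{K'}$ are isomorphic.

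The only step I expect to require genuine care is the determinant comparison in the second paragraph: one must check that the integer ambiguity produced by Lemma~\ref{lem:area} really is invisible to the holomorphic isomorphism class (i.e.\ that a constant factor of automorphy $e^{-2\pi N}$ is gauge-trivial), and that the coprimality hypothesis is precisely what upgrades ``same rank, degree and determinant'' to ``isomorphic'' via Atiyah's classification. Everything else is routine bookkeeping, namely verifying that the hypotheses of Theorems~\ref{thm:isom_SYZ_mirror} and~\ref{thm:inv_thm} (common connected domain, common covering map, transitive deck action, stability and unobstructedness of the branes) are literally met.
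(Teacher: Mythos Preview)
Your proposal is correct and follows essentially the same route as the paper: verify the hypotheses of Theorem~\ref{thm:isom_SYZ_mirror} (common connected domain $S^1$, covering $c_r$, transitive cyclic deck action), show $\check{\bb{L}}_K\cong\check{\bb{L}}_{K'}$ via stability plus Atiyah's classification, and then invoke Theorem~\ref{thm:inv_thm} for the Floer-theoretic consequence.

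The only difference worth noting is how the determinant comparison is obtained. The paper (as explained in the introduction) routes this through Theorem~\ref{thm:surgery_extension}: after equipping the branes with local systems satisfying the integrality condition, both $\check{\bb{L}}_{K,b}$ and $\check{\bb{L}}_{K',b}$ become extensions of $\check{\bb{L}}_{1,b_1}$ by $\check{\bb{L}}_{2,b_2}$, whence $\det(\check{\bb{L}}_{K,b})\cong\det(\check{\bb{L}}_{1,b_1})\otimes\det(\check{\bb{L}}_{2,b_2})\cong\det(\check{\bb{L}}_{K',b})$. You instead go one level down and compare determinants directly via Lemma~\ref{lem:area} and the gauge-triviality of the constant factor $e^{-2\pi N}$. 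Your argument is slightly more direct (it avoids introducing the auxiliary local systems needed to make Theorem~\ref{thm:surgery_extension} apply), while the paper's packaging makes the conceptual picture clearer (both mirror bundles sit in the \emph{same} extension problem). Either way the conclusion is the same, and your identification of the ``only step requiring care'' is accurate.
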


We give an example to illustrate Corollary \ref{cor:torus}.
\begin{example}
Let
$$\bb{L}_1=\bb{L}_{1,0}[1/2],\quad \bb{L}_2=\bb{L}_{1,3}[0]$$
be Lagrangian straight lines in the standard symplectic torus $T^2$. Then $\bb{L}_1$ intersects $\bb{L}_2$ at three points, all of which are of index $1$. We equip $\bb{L}_1$ with the local system
$$d+2\pi i\frac{1}{2}dx$$
and $\bb{L}_2$ with the trivial one. Consider the Lagrangian immersions $\bb{L}_1,\bb{L}_3$, as shown in Figure \ref{fig:3}.

\begin{figure}[H]
\centering
   \includegraphics[width=70mm]{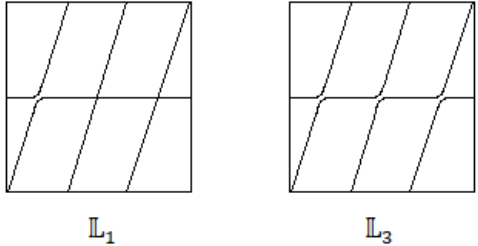}
   \caption{Two non-Hamiltonian equivalent but lifted Hamiltonian equivalent Lagrangian immersions in $T^2$.}
\label{fig:3}
\end{figure}

Both $\bb{L}_1,\bb{L}_3$ have connected domain and satisfy the gcd assumption: $\text{gcd}(1+1,0+3)=1$ as in Theorem \ref{thm:surgery_extension} If we equip their domain with the trivial local system, Theorem \ref{thm:surgery_extension} can be applied to conclude that both $\check{\bb{L}}_1$ and $\check{\bb{L}}_3$ fit into some exact sequences:
\begin{align*}
0 & \to \check{\bb{L}}_2 \to \check{\bb{L}}_1 \to \check{\bb{L}}_{1,\frac{1}{2}} \to 0,\\
0 & \to \check{\bb{L}}_2 \to \check{\bb{L}}_3 \to \check{\bb{L}}_{1,\frac{1}{2}} \to 0.
\end{align*}
By Atiyah's classification of indecomposable bundles on elliptic curves \cite{Atiyah_vector_bundle_over_an_elliptic_curve}, we know that $\check{\bb{L}}_1\cong\check{\bb{L}}_3$. Hence by Theorem \ref{thm:isom_SYZ_mirror}, $\bb{L}_1$ and $\bb{L}_3$ are $(S^1\times_{S^1}T^2,\pi_{T^2})$-lifted Hamiltonian isotopic to each other.

We can also compute the Floer cohomology of $\bb{L}_1$ and $\bb{L}_3$ directly. Since $\bb{L}_3$ is embedded and bounds no holomorphic disks, we have
$$HF(\bb{L}_3,\bb{L}_3)\cong H(S^1;\Lambda_{nov}).$$
For $\bb{L}_1$, let $\xi_1:S^1\to T^2$ be the immersion map. The Floer complex is given by
$$H(S^1;\Lambda_{nov})\oplus\Lambda_{nov}\{p^-,p^+,q^-,q^+\},$$
where $H(S^1;\Lambda_{nov})$ is the $\Lambda_{nov}$-valued cohomology of $S^1$ and $p^-,p^+,q^-,q^+$ are points on $S^1$ such that $\xi_1(p^-)=\xi_1(p^+)$ and $\xi_1(q^-)=\xi_1(q^+)$ are self-intersection points of $\bb{L}_1$ (see \cite[Corollary 11.4]{AJ}). Points with a positive (resp. negative) sign are graded to have degree 0 (resp. 1). There is one holomorphic disk from $p^+$ to $q^-$ and one from $q^+$ to $p^-$ (See Figure \ref{fig:holo_disk}).

\begin{figure}[H]
\centering
   \includegraphics[width=90mm]{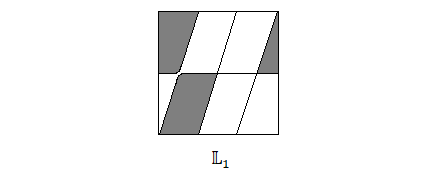}
   \caption{The holomorphic disk from $p^+$ (resp. $q^-$) to $q^+$ (resp. $p^-$).}
\label{fig:holo_disk}
\end{figure}

Hence the Floer cohomology of $\bb{L}_1$ is given by $HF(\bb{L}_1,\bb{L}_1)\cong H(S^1;\Lambda_{nov})$, which is canonically isomorphic to $HF(\bb{L}_3,\bb{L}_3)$, as expected by Theorem \ref{thm:inv_thm}. One can also use Hamiltonian perturbations to obtain the same result.
\end{example}

\appendix

\bibliographystyle{amsplain}
\bibliography{geometry}

\providecommand{\bysame}{\leavevmode\hbox to3em{\hrulefill}\thinspace}
\providecommand{\MR}{\relax\ifhmode\unskip\space\fi MR }
\providecommand{\MRhref}[2]{%
  \href{http://www.ams.org/mathscinet-getitem?mr=#1}{#2}
}
\providecommand{\href}[2]{#2}
\begin{thebibliography}{10}

\bibitem{Abouzaid06}
M.~Abouzaid, \emph{Homogeneous coordinate rings and mirror symmetry for toric
  varieties}, Geom. Topol. \textbf{10} (2006), 1097--1157 (electronic).
  \MR{2240909 (2007h:14052)}

\bibitem{Abouzaid_Fukaya_categories_of_higher_genus}
\bysame, \emph{On the {F}ukaya categories of higher genus surfaces}, Adv. Math.
  \textbf{217} (2008), no.~3, 1192--1235. \MR{2383898}

\bibitem{Abouzaid09}
\bysame, \emph{Morse homology, tropical geometry, and homological mirror
  symmetry for toric varieties}, Selecta Math. (N.S.) \textbf{15} (2009),
  no.~2, 189--270. \MR{2529936 (2011h:53123)}

\bibitem{AJ}
M.~Akaho and D.~Joyce, \emph{Immersed {L}agrangian {F}loer theory}, J.
  Differential Geom. \textbf{86} (2010), no.~3, 381--500. \MR{2785840}

\bibitem{AP}
D.~Arinkin and A.~Polishchuk, \emph{Fukaya category and {F}ourier transform},
  Winter {S}chool on {M}irror {S}ymmetry, {V}ector {B}undles and {L}agrangian
  {S}ubmanifolds ({C}ambridge, {MA}, 1999), AMS/IP Stud. Adv. Math., vol.~23,
  Amer. Math. Soc., Providence, RI, 2001, pp.~261--274. \MR{1876073}

\bibitem{Atiyah_vector_bundle_over_an_elliptic_curve}
M.~F. Atiyah, \emph{Vector bundles over an elliptic curve}, Proc. London Math.
  Soc. (3) \textbf{7} (1957), 414--452. \MR{0131423}

\bibitem{Auroux_Beginner_intro_to_Fuk}
D.~Auroux, \emph{A beginner's introduction to {F}ukaya categories}, Contact and
  symplectic topology, Bolyai Soc. Math. Stud., vol.~26, J\'{a}nos Bolyai Math.
  Soc., Budapest, 2014, pp.~85--136. \MR{3220941}

\bibitem{exact_sequence_stable_bundle}
E.~Ballico and B.~Russo, \emph{Exact sequences of semistable vector bundles on
  algebraic curves}, Bull. London Math. Soc. \textbf{32} (2000), no.~5,
  537--546. \MR{1767706}

\bibitem{BMP_I}
U.~Bruzzo, G.~Marelli, and F.~Pioli, \emph{A {F}ourier transform for sheaves on
  real tori. {I}. {T}he equivalence {${\rm Sky}(T)\simeq{\rm Loc}(\hat T)$}},
  J. Geom. Phys. \textbf{39} (2001), no.~2, 174--182. \MR{1844832}

\bibitem{BMP_II}
\bysame, \emph{A {F}ourier transform for sheaves on real tori. {II}. {R}elative
  theory}, J. Geom. Phys. \textbf{41} (2002), no.~4, 312--329. \MR{1888468}

\bibitem{COGP91}
P.~Candelas, X.~C. de~la Ossa, P.~S. Green, and L.~Parkes, \emph{A pair of
  {C}alabi-{Y}au manifolds as an exactly soluble superconformal theory},
  Nuclear Phys. B \textbf{359} (1991), no.~1, 21--74. \MR{1115626}

\bibitem{Chan09}
K.~Chan, \emph{Holomorphic line bundles on projective toric manifolds from
  {L}agrangian sections of their mirrors by {SYZ} transformations}, Int. Math.
  Res. Not. IMRN (2009), no.~24, 4686--4708. \MR{2564372 (2011k:53125)}

\bibitem{Chan12}
\bysame, \emph{Homological mirror symmetry for {$A_n$}-resolutions as a
  {$T$}-duality}, J. Lond. Math. Soc. (2) \textbf{87} (2013), no.~1, 204--222.
  \MR{3022713}

\bibitem{Chan-Leung10a}
K.~Chan and N.~C. Leung, \emph{Mirror symmetry for toric {F}ano manifolds via
  {SYZ} transformations}, Adv. Math. \textbf{223} (2010), no.~3, 797--839.
  \MR{2565550 (2011k:14047)}

\bibitem{Chan-Leung10b}
\bysame, \emph{On {SYZ} mirror transformations}, New developments in algebraic
  geometry, integrable systems and mirror symmetry ({RIMS}, {K}yoto, 2008),
  Adv. Stud. Pure Math., vol.~59, Math. Soc. Japan, Tokyo, 2010, pp.~1--30.
  \MR{2683205 (2011g:53186)}

\bibitem{Chan-Leung12}
\bysame, \emph{Matrix factorizations from {SYZ} transformations}, Advances in
  geometric analysis, Adv. Lect. Math. (ALM), vol.~21, Int. Press, Somerville,
  MA, 2012, pp.~203--224. \MR{3077258}

\bibitem{CPU14}
K.~Chan, D.~Pomerleano, and K.~Ueda, \emph{Lagrangian sections on mirrors of
  toric {C}alabi-{Y}au 3-folds}, preprint (2016),
  \href{http://arxiv.org/abs/1602.07075}{arXiv:1602.07075}.

\bibitem{CPU13}
\bysame, \emph{Lagrangian torus fibrations and homological mirror symmetry for
  the conifold}, Comm. Math. Phys. \textbf{341} (2016), no.~1, 135--178.
  \MR{3439224}

\bibitem{Chan-Ueda12}
K.~Chan and K.~Ueda, \emph{Dual torus fibrations and homological mirror
  symmetry for {$A_n$}-singlarities}, Commun. Number Theory Phys. \textbf{7}
  (2013), no.~2, 361--396. \MR{3164868}

\bibitem{sections_line_bundles}
J.~Chen, \emph{Lagrangian sections and holomorphic {${\rm U}(1)$}-connections},
  Pacific J. Math. \textbf{203} (2002), no.~1, 139--160. \MR{1895929}

\bibitem{Fang16}
B.~Fang, \emph{Central charges of {T}-dual branes for toric varieties},
  preprint (2016), \href{http://arxiv.org/abs/1611.05153}{arXiv:1611.05153}.

\bibitem{Fang08}
\bysame, \emph{Homological mirror symmetry is {$T$}-duality for {$\Bbb P^n$}},
  Commun. Number Theory Phys. \textbf{2} (2008), no.~4, 719--742. \MR{2492197
  (2010f:53154)}

\bibitem{FLTZ11b}
B.~Fang, C.-C.~M. Liu, D.~Treumann, and E.~Zaslow, \emph{The
  coherent-constructible correspondence and homological mirror symmetry for
  toric varieties}, Geometry and analysis. {N}o. 2, Adv. Lect. Math. (ALM),
  vol.~18, Int. Press, Somerville, MA, 2011, pp.~3--37. \MR{2882439}

\bibitem{FLTZ12}
\bysame, \emph{T-duality and homological mirror symmetry for toric varieties},
  Adv. Math. \textbf{229} (2012), no.~3, 1875--1911. \MR{2871160}

\bibitem{Fukaya_multivalued_theta_function}
K.~Fukaya, \emph{Mirror symmetry of abelian varieties and multi-theta
  functions}, J. Algebraic Geom. \textbf{11} (2002), no.~3, 393--512.
  \MR{1894935}

\bibitem{FOOO1}
K.~Fukaya, Y.-G. Oh, H.~Ohta, and K.~Ono, \emph{Lagrangian intersection {F}loer
  theory: anomaly and obstruction. {P}art {I}}, AMS/IP Studies in Advanced
  Mathematics, vol.~46, American Mathematical Society, Providence, RI;
  International Press, Somerville, MA, 2009. \MR{2553465}

\bibitem{FOOO2}
\bysame, \emph{Lagrangian intersection {F}loer theory: anomaly and obstruction.
  {P}art {II}}, AMS/IP Studies in Advanced Mathematics, vol.~46, American
  Mathematical Society, Providence, RI; International Press, Somerville, MA,
  2009. \MR{2548482}

\bibitem{Gromov_Lagrangian_h}
M.~Gromov, \emph{Partial differential relations}, Ergebnisse der Mathematik und
  ihrer Grenzgebiete (3), vol.~9, Springer-Verlag, Berlin, 1986. \MR{864505}

\bibitem{factor_of_automorphy}
O.~Iena, \emph{Vector bundles on elliptic curves and factors of automorphy},
  Rend. Istit. Mat. Univ. Trieste \textbf{43} (2011), 61--94. \MR{2933124}

\bibitem{Ko}
K.~Kobayashi, \emph{On exact triangles consisting of stable vector bundles on
  tori}, Differential Geom. Appl. \textbf{53} (2017), 268--292. \MR{3679186}

\bibitem{HMS}
M.~Kontsevich, \emph{Homological algebra of mirror symmetry}, Proceedings of
  the {I}nternational {C}ongress of {M}athematicians, {V}ol.\ 1, 2 ({Z}\"urich,
  1994), Birkh\"auser, Basel, 1995, pp.~120--139. \MR{1403918}

\bibitem{Lee_Lagrangian_h}
J.~A. Lees, \emph{On the classification of {L}agrange immersions}, Duke Math.
  J. \textbf{43} (1976), no.~2, 217--224. \MR{0410764}

\bibitem{Leung05}
N.~C. Leung, \emph{Mirror symmetry without corrections}, Comm. Anal. Geom.
  \textbf{13} (2005), no.~2, 287--331. \MR{2154821}

\bibitem{LYZ}
N.~C. Leung, S.-T. Yau, and E.~Zaslow, \emph{From special {L}agrangian to
  {H}ermitian-{Y}ang-{M}ills via {F}ourier-{M}ukai transform}, Adv. Theor.
  Math. Phys. \textbf{4} (2000), no.~6, 1319--1341. \MR{1894858}

\bibitem{On_a_conjecture_of_Lange}
B.~Russo and M.~Teixidor~i Bigas, \emph{On a conjecture of {L}ange}, J.
  Algebraic Geom. \textbf{8} (1999), no.~3, 483--496. \MR{1689352}

\bibitem{SYZ}
A.~Strominger, S.-T. Yau, and E.~Zaslow, \emph{Mirror symmetry is
  {$T$}-duality}, Nuclear Phys. B \textbf{479} (1996), no.~1-2, 243--259.
  \MR{1429831}

\bibitem{Thomas01}
R.~P. Thomas, \emph{Moment maps, monodromy and mirror manifolds}, Symplectic
  geometry and mirror symmetry ({S}eoul, 2000), World Sci. Publ., River Edge,
  NJ, 2001, pp.~467--498. \MR{1882337}

\end{thebibliography}

\end{document}